\definecolor{codegreen}{rgb}{0,0.6,0}
\definecolor{codegray}{rgb}{0.5,0.5,0.5}
\definecolor{codepurple}{rgb}{0.58,0,0.82}
\definecolor{backcolour}{rgb}{0.95,0.95,0.92}
\lstdefinelanguage{Sage}[]{Python}
{morekeywords={False,sage,True},sensitive=true}
\definecolor{dblackcolor}{rgb}{0.0,0.0,0.0}
\definecolor{dbluecolor}{rgb}{0.01,0.02,0.7}
\definecolor{dgreencolor}{rgb}{0.2,0.4,0.0}
\definecolor{dgraycolor}{rgb}{0.30,0.3,0.30}
\newcommand{\indep}{\perp \!\!\! \perp}
\newlength\tindent
\definecolor{delim}{RGB}{20,105,176}
\definecolor{numb}{RGB}{106, 109, 32}
\definecolor{string}{rgb}{0.64,0.08,0.08}
\newtheorem{theorem}{Theorem}
\newtheorem{proposition}[theorem]{Proposition}
\newtheorem{lemma}[theorem]{Lemma}
\newtheorem{corollary}[theorem]{Corollary}
\theoremstyle{definition}
\newtheorem{remark}[theorem]{Remark}
\newtheorem{conjecture}[theorem]{Conjecture}
\newtheorem{example}[theorem]{Example}
\newcommand\independent{\protect\mathpalette{\protect\independenT}{\perp}}
\def\independenT#1#2{\mathrel{\rlap{$#1#2$}\mkern2mu{#1#2}}}
\def\P{{\mathbb{P}}}
\def\n{{\mathbf{n}}}
\def\C{{\mathcal{C}}}
\def\Nxn{N_{X,\mathbf{n}}}
\def\proj2{\mathbb{P}^2(\mathbb{K})}
\def\Mn{\mathcal{M}_{\mathbf{n}}}
\def\n{\mathbf{n}}
\def\V{\mathcal{V}_X}
\def\Vc{\mathcal{V}_{X,\mathcal{C}}}
\def\Mc{\mathcal{M}_\mathcal{C}}
\def\Mglobal{\mathcal{M}_{\text{global}(G)}}
\def\F{\mathcal{F}}
\def\TVc{ \widetilde{\mathcal{V}}_{X,\mathcal{C}}}
\def\para{\vspace{1.5mm}}
\def\C{{\mathbb C}}
\def\Z{\mathbb{Z}}
\def\proj2{\mathbb{P}^2(\mathbb{K})}
\def\bbV{\mathbb{V}}
\def\O{\mathcal{O}}
\title{Game Theory of Undirected Graphical Models}
\author{Irem Portakal}
\address{Max Planck Institute for Mathematics in the Sciences}
\email{mail@irem-portakal.de}
\author{Javier Sendra--Arranz}
\address{Max Planck Institute for Mathematics in the Sciences and University of Tübingen} \email{sendra@math.uni-tuebingen.de}
\date{\today}
\begin{document}
\maketitle
\begin{abstract}
An $n$-player game $X$ in normal form can be modeled via undirected discrete graphical models where the discrete random variables represent the players and their state spaces are the set of pure strategies. There exists an edge between the vertices of the graphical model whenever there is a dependency between the associated players. We study the Spohn conditional independence (CI) variety $\mathcal{V}_{X,\mathcal{C}}$, which is the intersection of the independence model $\Mc$ with the Spohn variety of the game $X$.  We prove a conjecture by the first author and Sturmfels that $\mathcal{V}_{X,\mathcal{C}}$ is of codimension $n$ in $\Mc$ for a generic game $X$ with binary choices. We show that the set of totally mixed CI equilibria i.e.\ the restriction of the Spohn CI variety to the open probability simplex is a smooth semialgebraic manifold for a generic game $X$ with binary choices. If the undirected graph is a disjoint union of cliques, we analyze certain algebro-geometric features of Spohn CI varieties and prove affine universality theorems.
\end{abstract}

\section{Introduction}

Game theory is an area that has historically benefited greatly from external ideas. One of the most known examples is the application of the Kakutani fixed-point theorem from topology to show the existence of Nash equilibria \cite{nash50}. Beyond topology, nonlinear algebra has also played an important role in advancing game theory. For instance, one can compute Nash equilibria by studying systems of multilinear equations. This leads to finding upper bounds for the number of totally mixed Nash equilibria of generic games which uses mixed volumes of polytopes and the BKK theorem \cite{MCKELVEY1997411}, \cite[Chapter 6]{CBMS}. \\

More recently, the concept of {\emph{correlated equilibria}}, a generalization of Nash equilibria introduced by Aumann \cite{aumann1}, was studied via the use of oriented matroids and convex geometry \cite{BHP22}. Spohn introduced yet another generalization of Nash equilibria, known as {\emph{dependency equilibria}}, by discussing how decisions made under individual rationality may differ from decisions made under collective rationality \cite{homo}.
This discussion is detailed in the classical example of the prisoner's dilemma, where the only Nash and correlated equilibrium is that both prisoners defect. 
In the concept of dependency equilibrium, the causal structure of decision situations ascends to a reflexive standpoint. This suggests that the player takes into account not only outside factors but also their own decisions and potential future decisions in the overall causal understanding of their situation.
Reflexive decision theory \cite{Spo12} is employed to rationalize the cooperation of the prisoners resulting in a dependency equilibrium. 

\para 

Every Nash equilibrium lies on the {\it Spohn variety}, i.e.\ the algebraic model of the dependency equilibria. In particular, for generic games, every Nash equilibrium is a dependency equilibrium \cite{PW24}. The algebro-geometric examination of dependency equilibrium
presented a novel perspective on understanding Nash and dependency equilibrium within the framework of {\it undirected discrete graphical models} from algebraic statistics for the first time in \cite{BI,IJ}, albeit limited to specific cases. The preference for undirected graphical models aligns with the principles of reflexive decision theory. The promise of nonlinear algebra offering a new way to expand game theory is moreover supported by the universality theorems for Nash equilibria and {\it Spohn conditional independence varieties} \cite{datta,IJ}. This paper offers a more concise exploration of general undirected graphical models and also strives to make the content inviting to both game theorists and nonlinear algebraists, notwithstanding the non-trivial nature of this objective. \\

Graphical models are widely used to build complicated dependency structures between random variables. One of the early developers of the axioms for conditional independence statements is Spohn \cite{Spo80}, who, quite coincidentally (or not), introduced dependency equilibria. We model a $(d_1 \times \cdots \times d_n)$-player game $X$ in normal form as an undirected graphical model $G = ([n], E)$ where the discrete random variables $\mathcal{X}_1, \ldots , \mathcal{X}_n$ represent the players of the game $X$, their state spaces $[d_1], \ldots, [d_n]$ represent the set of pure strategies of each player. An edge between two random variables represents the dependency of their actions for those players. A {\it conditional independent statement} has the form that $A$ is conditionally independent of $B$ given $C$ and written as $\mathcal{X}_A \indep \mathcal{X}_B \ | \ \mathcal{X}_C$, where $A,B,C$ are disjoint vertex subsets of $[n]$. This can be considered as the players in the group $A$ are conditionally independent of those in $B$ given the group of players $C$. For instance, consider a $3$-player scenario where Alice is studying for a game theory exam, Bob is assisting her with studying, and Carol is ensuring she has a good breakfast before the exam.  In the resulting undirected graphical model, which forms a line graph on three vertices, the structure illustrates that the actions of Bob are independent of the actions of Carol given Alice's. \\

We consider global Markov properties $\mathcal{C}:=\text{global}(G)$, a certain set of conditional independence statements and formally introduce the discrete conditional independence model $\mathcal{M}_{\mathcal{C}}$, Spohn CI variety $\Vc$ and CI equilibria in Section~\ref{sec: algebraic game theory}. Nash and dependency equilibria fall into two extremes of the spectrum of Spohn CI variety with the graphical model on $n$ isolated vertices and the complete graphical model on $n$ vertices, respectively. For graphs with at least one edge, the set of CI equilibria is generically a semialgebraic set of positive dimension. This leads to a much more complicated geometry than in the Nash case where we expect a finite number of points. One of the main tools for studying sophisticated semialgebraic sets is through the algebro-geometric properties of its algebraic closure. Such techniques have been proven to be extremely beneficial in fields such as optimization, convex geometry, and algebraic statistics (see e.g.\ \cite{NlA,BPT}). Following this strategy, we study the features of these semialgebraic sets i.e.\  CI equilibria, through the algebro-geometric analysis of Spohn CI varieties.\\

We focus on the games with binary choices and in Theorem~\ref{theo:conj decom}, we prove the dimension part of \cite[Conjecture 6.3]{BI}. The dimension of the Spohn CI variety can be also determined directly by the graphical model $G$ by counting the number of positive dimensional faces of the associated simplicial complex of the cliques (Corollary~\ref{cor: dimensional independence model}). We also prove that the set of totally mixed CI equilibria of an undirected graph $G$ for generic games with binary choices is either empty or a smooth semialgebraic manifold (Theorem~\ref{theo:smooth}). While the focus on binary choices enables us to prove similar universality theorems as in \cite{datta,IJ}, the study of Spohn CI varieties for games with choices beyond binary is yet to be undertaken, providing many open questions. In Section~\ref{sec: filtration of Spohn CI}, we study the filtration of Spohn CI varieties with respect to the poset of graphs on $n$ vertices. Among other examples, we also present a $4$-player game in detail where CI equilibria Pareto improve Nash equilibria in Example~\ref{ex:nash variety}.\\

Section~\ref{sec: Nash CI} is a rigorous algebro-geometric study of Nash CI variety $\Nxn$ i.e.\ Spohn CI variety where the graphical model consists of the disjoint union of $k$ cliques on ${\bf{n}}:=(n_1, \ldots, n_k)$ vertices. The independence model $\Mn$ is a Segre variety and Nash CI variety $\Nxn$ is a complete intersection in $\Mn$. These varieties can be thought as a generalization of Nash CI curve \cite{IJ} where all the cliques are isolated vertices except one is a clique on 2 vertices. One of the related approaches that makes these varieties worth studying is multi-agent reinforcement learning \cite{LWTHAM17} and partially observable Markov decision processes (POMDPs) \cite{MM22}. We study the degree of Nash CI varieties and prove that they are connected. In particular, in case of smooth Nash CI surfaces we prove that they are of general type. Lastly in Section~\ref{sec: affine universality}, in the same spirit of Datta's universality theorem for Nash equilibria and the affine universality theorems for Nash CI curves, we prove affine universality theorems for Nash CI varieties $\Nxn$ where the graphical model consists of isolated vertices and cliques of size~2.

\section{Algebraic game theory preliminaries}\label{sec: algebraic game theory}

Let $X$ be an $n$-player game. For $i\in[n]$, the $i$th player can select from $[d_i]$ strategies and the associated payoff table is a tensor $X^{(i)}$ of format $d_1 \times \cdots \times d_n$ with real entries. The entry $X^{(i)}_{j_1 \ldots j_n} \in \mathbb{R}$ represents the payoff of player $i$, when player $1$ chooses strategy $j_1$, player $2$ chooses strategy $j_2$, etc. Let $V =\mathbb{R}^{d_1} \times \cdots \times \mathbb{R}^{d_n}$ be the real vector space of all tensors, and $\mathbb{P}(V)$ the corresponding projective space. The coordinate $p_{j_1\ldots j_n}$ of $\P(V)$ is the probability that the first player chooses the strategy $j_1$, the second player $j_2$, etc. We focus on the case of totally mixed equilibria points, i.e.\ positive real points of $\P(V)$ in the open probability simplex $\Delta:=\Delta_{d_1 \ldots d_{n}-1}^{\circ}$ of dimension $d_1 \cdots d_n -1$. 

\subsection{Spohn variety and dependency equilibria}
The \textit{expected payoff} of the $i$th player is given by:
\[
PX^{(i)} = \displaystyle\sum_{j_1=1}^{d_1}\cdots \sum_{j_n=1}^{d_n}X^{(i)}_{j_1\cdots j_n}p_{j_1\cdots j_n}.
\]

Similarly, we define the \textit{conditional expected payoff} of the $i$th player as the expected payoff conditioned on player $i$ having fixed pure strategy $k \in [d_i]$ as follows 
\[
\displaystyle\sum_{j_1=1}^{d_1}\cdots\widehat{\sum_{j_i=1}^{d_i}}\cdots \sum_{j_n=1}^{d_n}X^{(i)}_{j_1\cdots k \cdots  j_n}\frac{p_{j_1\cdots k \cdots j_n}}{p_{+\cdots+k+\cdots+}},
\]
where $$p_{+\cdots+k+\cdots+}=
\displaystyle\sum_{j_1=1}^{d_1}\cdots\widehat{\sum_{j_i=1}^{d_i}}\cdots \sum_{j_n=1}^{d_n}
p_{j_1\cdots k \cdots j_n}.
$$
We say that a tensor $P\in\Delta$ is a (totally mixed) \textit{dependency equilibrium} of an $n$-player game $X$ if the conditional expected payoff of each player $i$ does not depend on their strategy $k \in [d_i]$. For mixed dependency equilibria, some of the denominators $p_{+\cdots+k+\cdots+}$ might vanish and thus some additional limit definitions is proposed \cite{SRR23, PW24}. In this paper, we focus on the totally mixed equilibria notions. We can rephrase the definition of totally mixed dependendy equilibria in terms of $2 \times 2$ minors of the following $d_i \times 2$ matrices of linear forms:
 \begin{equation}\label{eq: Spohn matrices}
M_i \,=\, M_i(P) \,\,:= \,\,\,\begin{bmatrix}
\vdots & \vdots \\
\,\,p_{+\cdots + k + \cdots+}\, &\,\, \displaystyle\sum_{j_1 = 1}^{d_1} \cdots \widehat{\displaystyle\sum_{j_{i} = 1}^{d_{i}}} \cdots \displaystyle\sum_{j_n = 1}^{d_n} X^{(i)}_{j_1 \cdots  k  \cdots j_n} p_{j_1 \cdots  k  \cdots j_n}\, \\
\vdots & \vdots 
\end{bmatrix} \! .
\end{equation}
The variety $\mathcal{V}_X \subseteq \P(V)$ defined by the $2\times 2$ minors of the matrices $M_1, \ldots, M_n$ is called the \emph{Spohn variety} of the game $X$.
The dependency equilibria of the game $X$ is then the intersection $\mathcal{V}_X \cap \Delta$. By \cite[Theorem 6]{BI}, for a generic game $X$, i.e.\ for generic payoff tables $X^{(1)},\ldots, X^{(n)}$, the Spohn variety is irreducible of codimension $d_1+\cdots +d_n-n$ and degree $d_1\cdots d_n$. 
Moreover, the set of totally mixed Nash equilibria is 
the intersection
\[
\mathcal{V}_X\cap\left( \P^{d_1-1}\times\cdots\times\P^{d_n-1}\right) \cap \Delta.
\]

For an $n$-player game given by $n$ payoff tensors $X^{(i)}$, we define the canonical linear map, called the \emph{payoff map}:
\begin{align*}
\pi_X \colon \mathcal{V}_X  &\longrightarrow \mathbb{R}^n\\
P &\mapsto (P X^{(1)}, \ldots , P X^{(n)}).
\end{align*}

The \emph{payoff region} $\mathcal{P}_X := \pi_X(\mathcal{V} \cap \Delta) \subset \pi_X (\Delta) \subset \mathbb{R}^n$ is a useful tool to study Pareto optimal dependency equilibria. It is a union of oriented matroid strata in $\mathbb{R}^n$ and its algebraic boundary is a union of irreducible hypersurfaces of degree at most $\sum_{i=1}^n d_i -n +1$ \cite[Theorem 5.5]{BI}. \\

An aspect to consider for Spohn variety is that generically it is high dimensional. The set of dependency equilibria $\V\cap\Delta$ is either empty or has the same dimension as $\V$. To drop the dimension and investigate different cases of dependencies between players resulting in a new concept of equilibria, we study the intersection of $\V$ with statistical models arising from conditional independence statements. We also make use of the payoff map e.g.\ in Example~\ref{ex:nash variety} to show how these equilibria called \emph{conditional independence equilibria} Pareto improve Nash equilibria in certain games.

\subsection{Graphical models and conditional independence equilibria}\label{subsec: graphical models and CI equilibria}

Let $G = ([n], E)$ be an undirected graph and let $\mathcal{X} = (\mathcal{X}_i \ | \ i \in [n])$ be the discrete random vectors associated to $n$ players of the given game $X$ in normal form. Let $\mathcal{X}_i$ have state space $[d_i]$, equivalently the set of pure strategies of player $i$. Each edge $(i,j) \in E$ denotes the dependence between the random variables $\mathcal{X}_i$ and $\mathcal{X}_j$ i.e.\ player~$i$ and player~$j$. We consider {\emph{Markov properties}} associated to the graph $G$, that is certain conditional independence (CI) statements that must be satisfied by all random vectors $\mathcal{X}$ consistent with the graph $G$. A pair of vertices $(a,b) \in [n]$ is said to be separated by a subset $C \subset [n] \backslash\{a,b\}$ of vertices, if every path from $a$ to $b$ contains a vertex $c \in C$. Let $A, B , C \subseteq [n]$ be disjoint subsets of $[n]$. We say that $C$ separates $A$ and $B$ if $a$ and $b$ are separated by $C$ for all $a \in A$ and $b \in B$. The {\emph{global Markov property}} global($G$) associated to $G$ consists of all CI statements $\mathcal{X}_A \independent \mathcal{X}_B \ | \ \mathcal{X}_C$ for all disjoint subsets $A$, $B$ and $C$ such that $C$ separates $A$ and $B$ in $G$. The CI statement $\mathcal{X}_A \independent \mathcal{X}_B \ | \ \mathcal{X}_C$ can be interpreted as ``given $C$, $A$ is independent from $B$ and vice versa". There are also pairwise and local Markov properties where pairwise($G$)$\subseteq$ local($G$) $\subseteq$ global($G$). However, since we focus on totally mixed equilibria i.e.\ strictly positive joint probability distributions $P \in \Delta$, $P$ satisfies the {\emph{intersection axiom}}:
$$\mathcal{X}_A \independent \mathcal{X}_{B} \ | \ \mathcal{X}_{C \cup D} \text{ and } \mathcal{X}_{A} \independent \mathcal{X}_C \ | \ \mathcal{X}_{B \cup D} \implies \mathcal{X}_A \independent \mathcal{X}_{B \cup C} \ | \ \mathcal{X}_{D} $$
Thus, the pairwise, local and global Markov property are all equivalent by Pearl and Paz in \cite{PP86}.
\begin{example}\label{ex: line graph}
    Let $G = ([4],E)$ be the line graph from Figure~\ref{fig:line graph}. All CI statements for global (and also local) Markov property associated to $G$ can be deduced by the following two CI statements via conditional independence axioms
    $$\mathcal{X}_1 \independent \mathcal{X}_{\{3,4\}} \ | \ \mathcal{X}_{2} \text{ and } \mathcal{X}_{\{1,2\}} \independent \mathcal{X}_4 \ | \ \mathcal{X}_3.$$
In this case pairwise Markov property associated to $G$ consists of 
\begin{align*}
\mathcal{X}_1 \independent \mathcal{X}_{4} \ | \ \mathcal{X}_{\{2,3\}}  \\
\mathcal{X}_1 \independent \mathcal{X}_{3} \ | \ \mathcal{X}_{\{2,4\}} \\
\mathcal{X}_2 \independent \mathcal{X}_{4} \ | \ \mathcal{X}_{\{1,3\}} 
\end{align*}
If one considers the positive joint probability distributions, by intersection axiom, they imply the two CI statements of the global Markov property. Thus, the choice of the global Markov property does not affect the study of totally mixed equilibria.
\end{example}

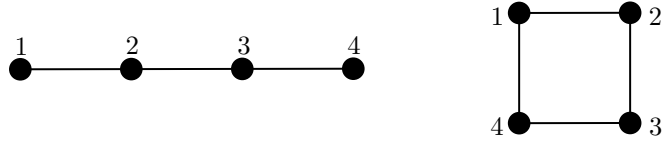
\begin{figure}
    \centering

\tikzset{every picture/.style={line width=0.75pt}}   

\begin{tikzpicture}[x=0.75pt,y=0.75pt,yscale=-0.7,xscale=0.7]

\draw  [fill={rgb, 255:red, 0; green, 0; blue, 0 }  ,fill opacity=1 ] (123.9,130.61) .. controls (123.9,126.51) and (127.23,123.18) .. (131.33,123.18) .. controls (135.44,123.18) and (138.76,126.51) .. (138.76,130.61) .. controls (138.76,134.71) and (135.44,138.04) .. (131.33,138.04) .. controls (127.23,138.04) and (123.9,134.71) .. (123.9,130.61) -- cycle ;
\draw  [fill={rgb, 255:red, 0; green, 0; blue, 0 }  ,fill opacity=1 ] (43.94,130.75) .. controls (43.94,126.65) and (47.26,123.32) .. (51.37,123.32) .. controls (55.47,123.32) and (58.79,126.65) .. (58.79,130.75) .. controls (58.79,134.85) and (55.47,138.18) .. (51.37,138.18) .. controls (47.26,138.18) and (43.94,134.85) .. (43.94,130.75) -- cycle ;
\draw    (51.37,130.75) -- (131.33,130.61) ;
\draw  [fill={rgb, 255:red, 0; green, 0; blue, 0 }  ,fill opacity=1 ] (203.87,130.47) .. controls (203.87,126.37) and (207.2,123.04) .. (211.3,123.04) .. controls (215.4,123.04) and (218.73,126.37) .. (218.73,130.47) .. controls (218.73,134.57) and (215.4,137.9) .. (211.3,137.9) .. controls (207.2,137.9) and (203.87,134.57) .. (203.87,130.47) -- cycle ;
\draw    (131.33,130.61) -- (211.3,130.47) ;
\draw    (211.3,130.47) -- (291.27,130.33) ;
\draw  [fill={rgb, 255:red, 0; green, 0; blue, 0 }  ,fill opacity=1 ] (283.84,130.33) .. controls (283.84,126.23) and (287.17,122.9) .. (291.27,122.9) .. controls (295.37,122.9) and (298.7,126.23) .. (298.7,130.33) .. controls (298.7,134.44) and (295.37,137.76) .. (291.27,137.76) .. controls (287.17,137.76) and (283.84,134.44) .. (283.84,130.33) -- cycle ;
\draw  [fill={rgb, 255:red, 0; green, 0; blue, 0 }  ,fill opacity=1 ] (403.34,90.14) .. controls (403.34,86.04) and (406.66,82.71) .. (410.77,82.71) .. controls (414.87,82.71) and (418.19,86.04) .. (418.19,90.14) .. controls (418.19,94.24) and (414.87,97.57) .. (410.77,97.57) .. controls (406.66,97.57) and (403.34,94.24) .. (403.34,90.14) -- cycle ;
\draw  [fill={rgb, 255:red, 0; green, 0; blue, 0 }  ,fill opacity=1 ] (483.31,90) .. controls (483.31,85.9) and (486.63,82.57) .. (490.73,82.57) .. controls (494.84,82.57) and (498.16,85.9) .. (498.16,90) .. controls (498.16,94.1) and (494.84,97.43) .. (490.73,97.43) .. controls (486.63,97.43) and (483.31,94.1) .. (483.31,90) -- cycle ;
\draw    (410.77,90.14) -- (490.73,90) ;
\draw  [fill={rgb, 255:red, 0; green, 0; blue, 0 }  ,fill opacity=1 ] (403.34,169.6) .. controls (403.34,165.5) and (406.66,162.17) .. (410.77,162.17) .. controls (414.87,162.17) and (418.19,165.5) .. (418.19,169.6) .. controls (418.19,173.71) and (414.87,177.03) .. (410.77,177.03) .. controls (406.66,177.03) and (403.34,173.71) .. (403.34,169.6) -- cycle ;
\draw    (410.77,169.6) -- (490.73,169.46) ;
\draw  [fill={rgb, 255:red, 0; green, 0; blue, 0 }  ,fill opacity=1 ] (483.31,169.46) .. controls (483.31,165.36) and (486.63,162.04) .. (490.73,162.04) .. controls (494.84,162.04) and (498.16,165.36) .. (498.16,169.46) .. controls (498.16,173.57) and (494.84,176.89) .. (490.73,176.89) .. controls (486.63,176.89) and (483.31,173.57) .. (483.31,169.46) -- cycle ;
\draw    (490.73,169.46) -- (490.73,90) ;
\draw    (410.77,169.6) -- (410.77,90.14) ;

\draw (45.25,105.9) node [anchor=north west][inner sep=0.75pt]  [font=\footnotesize]  {$1$};
\draw (125.25,105.15) node [anchor=north west][inner sep=0.75pt]  [font=\footnotesize]  {$2$};
\draw (205.75,105.4) node [anchor=north west][inner sep=0.75pt]  [font=\footnotesize]  {$3$};
\draw (285,105.15) node [anchor=north west][inner sep=0.75pt]  [font=\footnotesize]  {$4$};
\draw (388.19,84.54) node [anchor=north west][inner sep=0.75pt]  [font=\footnotesize]  {$1$};
\draw (502.18,84.54) node [anchor=north west][inner sep=0.75pt]  [font=\footnotesize]  {$2$};
\draw (502.19,164) node [anchor=north west][inner sep=0.75pt]  [font=\footnotesize]  {$3$};
\draw (387.88,164) node [anchor=north west][inner sep=0.75pt]  [font=\footnotesize]  {$4$};

\end{tikzpicture}

    \caption{Line graph and cycle on four vertices}
    \label{fig:line graph}
\end{figure}

\para 

For a subset of vertices representing players $A \subseteq [n]$, we let $\mathcal{R}_A : = \prod_{a \in A} [d_a]$ to be the set of pure strategy profiles for $A$. The CI statement $\mathcal{X}_A \independent \mathcal{X}_{B} \ | \ \mathcal{X}_{C}$ holds if and only if  \begin{equation}\label{eq: independence ideal} p_{i_A i_B i_C +} p_{j_A j_B i_C +} - p_{i_A j_B i_C+} p_{j_A i_B i_C +} = 0 \end{equation} for all $i_A, j_A \in \mathcal{R}_A$, $i_B, j_B \in \mathcal{R}_B$ and $i_C, j_C \in \mathcal{R}_C$ (\cite[Proposition 4.1.6]{Sul}). The notation $p_{i_A i_B i_C +}$ is the probability $P(\mathcal{X}_{A} = i_A, \mathcal{X}_{B} = i_B, \mathcal{X}_{C} = i_C)$. This means, the set of CI statements $\mathcal{C}$ := global($G$) translates into a system of quadratic polynomial equations in the entries of the joint probability distribution. 
We define the \emph{discrete conditional independence model} $\mathcal{M}_\mathcal{C} \subseteq \P(V)$ to be the projective variety defined by all probability distributions satisfying the equation (\ref{eq: independence ideal}). In the original definition from \cite[Chapter 6]{BI}, it is assumed that components lying in the hyperplanes
$\{p_{j_1j_2 \cdots j_n}=0\}$ and $\{p_{++ \cdots +} = 0\}$ have been removed from $\mathcal{M}_\mathcal{C}$, since the ultimate goal is to study the equilibria in the open probability simplex $\Delta$. We denote this union of hyperplanes by $\mathcal{W}$. The {\em Spohn conditional independence (CI) variety} is defined as:
\begin{equation*}
\label{eq:spohnCI} \mathcal{V}_{X,\mathcal{C}} \,\, := \,\,
\overline{(\mathcal{V}_X \, \cap \, \mathcal{M}_\mathcal{C})\backslash \mathcal{W}}. 
\end{equation*}
The intersection of the Spohn CI variety $\Vc$ with the open simplex $\Delta$ is called {\em totally mixed conditional independence (CI)
equilibria}. An essential observation here is that two extremes of the totally mixed CI equilibria are Nash and dependency equilibria. If one considers the graph on $n$ isolated vertices, i.e.\ no dependencies between the players, then
$\mathcal{M}_\mathcal{C} = \P^{d_1 -1} \times \cdots \times \P^{d_n - 1}$ and
CI equilibria are totally mixed Nash equilibria. On the other hand, if one considers the complete graph on $n$ vertices, then $\mathcal{M}_\mathcal{C} = \P^{d_1 \cdots d_n  - 1}$ and CI equilibria are totally mixed dependency equilibria. The central focus of this paper revolves around all the intermediate cases in between these two extremes.\\

According to the Hammersley-Clifford Theorem \cite{HC71}, we adopt an alternative definition for eliminating the special hyperplanes in both the Spohn CI variety and the independence model $\Mc$, as presented in the following proposition. Let $\mathcal{C}(G)$ be the set of all maximal cliques (complete subgraphs) of $G$.
\begin{proposition}[{\cite[Proposition 13.2.5]{Sul}}]\label{prop: parametrized model}
The parametrized discrete undirected  graphical model associated to $G$ consists of all joint probability distributions $P \in \Delta_{d_1 \cdots d_n -1}$ given by the following monomial parametrization

\begin{equation}
    p_{j_1 \cdots j_n} = \frac{1}{Z(\sigma)} \prod_{C \in \mathcal{C}(G)} \sigma_{j_C}^{(C)}
\end{equation}

where $\sigma = (\sigma^{(C)})_{C \in \mathcal{C}(G)}$ is the vector of parameters and $Z(\sigma)$ is the normalizing constant. Moreover, the positive part of the parametrized model is the hierarchical log-linear model of associated to the simplicial complex of cliques in the graph $G$.
\end{proposition}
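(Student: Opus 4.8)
The plan is to recognize this proposition as the Hammersley--Clifford theorem together with its reformulation in the language of log-linear models, and to prove the two set inclusions separately. Throughout I would work with the positive part of the model, so that every coordinate $p_{j_1\cdots j_n}$ is strictly positive and $\log p_{j_1\cdots j_n}$ is well defined; this is precisely the regime of totally mixed equilibria, and the passage to the full variety $\Mc$ is obtained afterwards by taking Zariski closure.

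For the inclusion ``factorization $\Rightarrow$ Markov'', I would take any $P$ of the stated monomial form and verify each CI statement of global($G$). Suppose $C$ separates $A$ from $B$, and let $D$ be the remaining vertices. The connected components of $G\setminus C$ split into those reachable from $A$ and those reachable from $B$; since no edge crosses $C$, every maximal clique lies within $C$ together with a single such component. Hence the product over $\mathcal{C}(G)$ factors as $f\cdot g$, where $f$ depends only on the variables of $C$ and of the $A$-side components and $g$ only on those of $C$ and the $B$-side components. Marginalizing over $D$ and substituting into the quadratic relations~(\ref{eq: independence ideal}) makes the relevant $2\times2$ minors vanish, so $P\in\Mc$.

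The substantial direction is ``Markov $\Rightarrow$ factorization''. Here I would fix a baseline state (say all variables in state $1$) and, for a configuration $x$ and a subset $A\subseteq[n]$, write $x^A$ for the configuration agreeing with $x$ on $A$ and equal to the baseline off $A$. Möbius inversion over the Boolean lattice of subsets then yields the decomposition
\begin{equation*}
\log p_x \;=\; \sum_{A\subseteq[n]}\phi_A(x), \qquad \phi_A(x)\;=\;\sum_{B\subseteq A}(-1)^{|A\setminus B|}\,\log p_{x^B},
\end{equation*}
where each interaction term $\phi_A(x)$ depends only on the coordinates of $x$ indexed by $A$. The crux is to show that $\phi_A\equiv 0$ whenever $A$ is not a clique of $G$: if $A$ contains two non-adjacent vertices $u,v$, I would group the summands in pairs that differ only in the coordinates $u$ and $v$, so that each group contributes the logarithm of a cross-ratio $p_{x^{B'}}\,p_{x^{B'\cup\{u,v\}}}/(p_{x^{B'\cup\{u\}}}\,p_{x^{B'\cup\{v\}}})$. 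The pairwise Markov relation $\mathcal{X}_u\independent\mathcal{X}_v\mid\mathcal{X}_{[n]\setminus\{u,v\}}$, which is exactly an instance of~(\ref{eq: independence ideal}) with empty marginalization and which holds because the intersection axiom equates pairwise and global Markov properties on $\Delta$, forces each such cross-ratio to equal $1$, so every group vanishes. Granting this, $\log p_x=\sum_{A\text{ clique}}\phi_A(x)$; since every clique lies in a maximal clique, I would regroup the terms, exponentiate, and absorb the overall additive constant into $Z(\sigma)$ to recover precisely the monomial parametrization indexed by $\mathcal{C}(G)$.

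I expect the vanishing of the non-clique interactions to be the main obstacle, since it is the only step in which the graph structure genuinely enters, and the signs of the Möbius inversion must be matched exactly against the multiplicative CI constraints. For the log-linear reformulation I would finally observe that the faces of the simplicial complex of cliques are precisely the subsets $A$ on which $\phi_A$ is permitted to be nonzero, so the factorization expresses $\log P$ as a linear combination of the indicator sufficient statistics attached to those faces. This identifies the positive part of the parametrized model with the hierarchical log-linear model of the clique complex, namely the exponential family whose sufficient statistics are indexed by its maximal faces.
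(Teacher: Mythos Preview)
The paper does not provide its own proof of this proposition; it is quoted from \cite[Proposition 13.2.5]{Sul} and attributed in the preceding sentence to the Hammersley--Clifford theorem \cite{HC71}. Your argument is correct and is exactly the classical Hammersley--Clifford proof via M\"obius inversion found in those references, so you have simply supplied the proof that the paper chose to cite rather than reproduce.
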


This implies that we may consider the intersection of the positive part of parametrized graphical models with the Spohn variety in the open simplex $\Delta$ for the investigation of Spohn CI varieties and the totally mixed CI equilibria. The positive part of the parametrized toric model associated to $G$ is equal to $\Mc$ with the special hyperplanes removed. Thus, 
by Proposition~\ref{prop: parametrized model}, we may consider it as the hierarchical log-linear model associated to the simplicial complex of cliques in the graph $G$ (clique complex). We derive the dimension formula for the positive part of the parametrized (binary) model by \cite[Corollary 2.7]{HS}. Note that if $G$ is decomposable, then the parametrized discrete undirected graphical model is equal to $\mathcal{M}_{\text{global}(G)}$ without the removal of the special hyperplanes  (\cite[Theorem 4.2]{GMS06}). From now on, we focus on binary graphical models.
\begin{proposition}\label{prop: dim formula}
    Let $G = (V,E)$ be an undirected discrete binary graphical model i.e.\ $d_1 = \cdots = d_n = 2$. Then the dimension of the positive part of $\mathcal{M}_{\mathcal{C}}$ is the number of non-empty faces of the associated simplicial complex of cliques. 
\end{proposition}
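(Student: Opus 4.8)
The plan is to reduce the statement to a dimension count for a log-linear model and then exhibit an explicit parametrization whose free parameters are indexed precisely by the nonempty faces of the clique complex. By Proposition~\ref{prop: parametrized model}, the positive part of $\mathcal{M}_{\mathcal{C}}$ is the hierarchical log-linear model attached to the clique complex $\Delta(G)$ (the simplicial complex whose faces are the cliques of $G$), so it suffices to compute the dimension of this model as a smooth manifold inside the open simplex. Since a log-linear model has the same dimension as its Zariski closure, this manifold dimension is exactly the quantity asserted, and I would phrase everything in terms of it.

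First I would pass to the interaction (Fourier/Walsh) parametrization of binary distributions. Writing each state as $x \in \{-1,+1\}^n$, the monomials $\chi_F(x) = \prod_{i \in F} x_i$ indexed by subsets $F \subseteq [n]$ form an orthogonal, hence linearly independent, basis of the $2^n$-dimensional space of real functions on $\{-1,+1\}^n$. Taking logarithms in the monomial parametrization of Proposition~\ref{prop: parametrized model} gives $\log p(x) = -\log Z(\sigma) + \sum_{C \in \mathcal{C}(G)} \log \sigma^{(C)}_{x_C}$, and each summand $\log \sigma^{(C)}_{x_C}$ is an arbitrary function of $x_C$, so it expands as a linear combination of the $\chi_F$ with $F \subseteq C$. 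Collecting terms over all maximal cliques yields
\[
\log p(x) \;=\; \theta_\emptyset \;+\; \sum_{\emptyset \neq F \in \Delta(G)} \theta_F\, \chi_F(x),
\]
where a subset $F$ appears iff it is contained in some maximal clique, i.e.\ iff $F$ is a face of $\Delta(G)$, and $\theta_\emptyset$ absorbs $-\log Z(\sigma)$. Here I would record that the facets-only parametrization and the span over all faces coincide, and that as $\sigma$ varies each interaction coefficient $\theta_F$ ranges freely over $\mathbb{R}$.

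Next I would count free parameters. The normalization $\sum_x p(x) = 1$ determines $\theta_\emptyset$ uniquely in terms of the remaining $\theta_F$, so the probability distributions in the model are parametrized bijectively by $(\theta_F)_{\emptyset \neq F \in \Delta(G)}$. Linear independence of the characters $\chi_F$ forces the map $(\theta_F) \mapsto p$ to be an immersion with trivial fibers, hence a diffeomorphism onto its semialgebraic image; this is precisely the dimension statement for hierarchical log-linear models that we invoke from \cite[Corollary 2.7]{HS}. Consequently the dimension of the positive part of $\mathcal{M}_{\mathcal{C}}$ equals the number of parameters, namely the number of nonempty faces of $\Delta(G)$.

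The main obstacle I anticipate is bookkeeping rather than anything conceptual: one must check carefully that expanding the product over maximal cliques produces exactly the characters of all faces of $\Delta(G)$ (no more and no fewer), and that the unique linear relation among the constant function and the $\chi_F$ is the one eliminated by normalization, so that the count drops by exactly one from all faces to nonempty faces. The binary hypothesis $d_i = 2$ is what makes this clean, since it forces each face $F$ to contribute a single free interaction term; for larger state spaces each face $F$ would contribute $\prod_{i \in F}(d_i - 1)$ parameters and the resulting formula would differ.
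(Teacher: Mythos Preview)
Your argument is correct and is essentially the standard proof underlying the result the paper invokes: the paper itself does not give a detailed proof of this proposition but simply cites \cite[Corollary~2.7]{HS} after observing (via Proposition~\ref{prop: parametrized model}) that the positive part of $\mathcal{M}_{\mathcal{C}}$ is the hierarchical log-linear model on the clique complex. Your Walsh/Fourier expansion is exactly the computation that justifies that citation in the binary case, so your approach and the paper's coincide, with yours supplying the details the paper leaves to the reference.
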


\begin{example}\label{ex: chordal graph}
Consider a $4$-player game modeled with two different graphical models as in Figure~\ref{fig:line graph}. The homogenized version of the parametrization for the tree and the cycle are $$p_{j_1 j_2 j_3 j_4} = \sigma^{(12)}_{j_1 j_2 } \sigma^{(23)}_{j_2 j_3} \sigma^{(34)}_{j_3 j_4} \text{ and } p_{j_1 j_2 j_3 j_4} = \sigma^{(12)}_{j_1 j_2 } \sigma^{(23)}_{j_2 j_3} \sigma^{(34)}_{j_3 j_4} \sigma^{(14)}_{j_1 j_4} \text{ respectively}.$$  
For the line (and cycle graph), the associated simplicial complex of cliques consists of 4 cliques of size one and 3 cliques (4 cliques) of size 2. Thus, the positive part of $\Mc$ is 7-dimensional (8-dimensional).
Consider a 7-player game with binary choices modeled by the decomposable graph $G$ in Figure~\ref{fig:associated tree}. The homogenized version of the parametrization is $$p_{j_1 \cdots j_7} = \sigma^{(123)}_{j_1 j_2 j_3} \sigma^{(2345)}_{j_2 j_3 j_4 j_5} \sigma^{(2356)}_{j_2 j_3 j_5 j_6} \sigma^{(567)}_{j_5 j_6 j_7}.$$  The vanishing ideal is toric and generated by homogeneous binomials of degree 2. The dimension of the positive part of $\Mc$ is $7+13+9+2 = 31$ which is the number of non-empty faces of the associated simplicial complex of cliques. 
\end{example}

One of the main goals of this paper is to prove the conjecture on the dimension of Spohn CI varieties for generic games with binary choices, which is achieved in Theorem~\ref{theo:conj decom}. Before that, the conjecture was only proven for one-edge graphical models in \cite{IJ}.

\begin{figure}[t]
    \centering
\tikzset{every picture/.style={line width=0.75pt}}    

\begin{tikzpicture}[x=0.75pt,y=0.75pt,yscale=-1,xscale=1]

\draw  [fill={rgb, 255:red, 0; green, 0; blue, 0 }  ,fill opacity=1 ] (155,80.92) .. controls (155,78.16) and (157.24,75.92) .. (160,75.92) .. controls (162.76,75.92) and (165,78.16) .. (165,80.92) .. controls (165,83.68) and (162.76,85.92) .. (160,85.92) .. controls (157.24,85.92) and (155,83.68) .. (155,80.92) -- cycle ;
\draw  [fill={rgb, 255:red, 0; green, 0; blue, 0 }  ,fill opacity=1 ] (125,100.8) .. controls (125,98.04) and (127.24,95.8) .. (130,95.8) .. controls (132.76,95.8) and (135,98.04) .. (135,100.8) .. controls (135,103.56) and (132.76,105.8) .. (130,105.8) .. controls (127.24,105.8) and (125,103.56) .. (125,100.8) -- cycle ;
\draw    (130,100.8) -- (190,101.42) ;
\draw    (130,100.8) -- (160,80.92) ;
\draw    (160,80.92) -- (190,101.42) ;
\draw  [fill={rgb, 255:red, 0; green, 0; blue, 0 }  ,fill opacity=1 ] (185,101.42) .. controls (185,98.66) and (187.24,96.42) .. (190,96.42) .. controls (192.76,96.42) and (195,98.66) .. (195,101.42) .. controls (195,104.18) and (192.76,106.42) .. (190,106.42) .. controls (187.24,106.42) and (185,104.18) .. (185,101.42) -- cycle ;
\draw    (130,100.8) -- (160,40.92) ;
\draw    (190,101.42) -- (160,40.92) ;
\draw    (160,85.92) -- (160,40.92) ;
\draw  [fill={rgb, 255:red, 0; green, 0; blue, 0 }  ,fill opacity=1 ] (155,40.92) .. controls (155,38.16) and (157.24,35.92) .. (160,35.92) .. controls (162.76,35.92) and (165,38.16) .. (165,40.92) .. controls (165,43.68) and (162.76,45.92) .. (160,45.92) .. controls (157.24,45.92) and (155,43.68) .. (155,40.92) -- cycle ;
\draw    (190,101.42) -- (176.87,126.57) -- (160.5,160.92) ;
\draw    (130,100.8) -- (160.5,160.92) ;
\draw    (160.11,103.22) -- (160.5,160.92) ;
\draw  [fill={rgb, 255:red, 0; green, 0; blue, 0 }  ,fill opacity=1 ] (165.5,160.93) .. controls (165.44,163.69) and (163.16,165.92) .. (160.4,165.92) .. controls (157.64,165.91) and (155.44,163.66) .. (155.5,160.9) .. controls (155.56,158.14) and (157.84,155.91) .. (160.6,155.92) .. controls (163.36,155.92) and (165.56,158.17) .. (165.5,160.93) -- cycle ;
\draw    (160,80.92) -- (160.11,99) ;
\draw  [fill={rgb, 255:red, 0; green, 0; blue, 0 }  ,fill opacity=1 ] (104.6,80.8) .. controls (104.6,78.04) and (106.84,75.8) .. (109.6,75.8) .. controls (112.36,75.8) and (114.6,78.04) .. (114.6,80.8) .. controls (114.6,83.56) and (112.36,85.8) .. (109.6,85.8) .. controls (106.84,85.8) and (104.6,83.56) .. (104.6,80.8) -- cycle ;
\draw    (109.6,80.8) -- (130,100.8) ;
\draw    (109.6,80.8) -- (138,80.74) ;
\draw    (160,80.92) -- (142,80.88) ;
\draw    (190,101.42) -- (210.14,140.81) ;
\draw  [fill={rgb, 255:red, 0; green, 0; blue, 0 }  ,fill opacity=1 ] (205.14,140.81) .. controls (205.14,138.05) and (207.38,135.81) .. (210.14,135.81) .. controls (212.9,135.81) and (215.14,138.05) .. (215.14,140.81) .. controls (215.14,143.57) and (212.9,145.81) .. (210.14,145.81) .. controls (207.38,145.81) and (205.14,143.57) .. (205.14,140.81) -- cycle ;
\draw    (210.14,140.81) -- (160.5,160.92) ;
\draw  [fill={rgb, 255:red, 0; green, 0; blue, 0 }  ,fill opacity=1 ] (403.67,111.75) .. controls (403.67,108.99) and (405.91,106.75) .. (408.67,106.75) .. controls (411.43,106.75) and (413.67,108.99) .. (413.67,111.75) .. controls (413.67,114.51) and (411.43,116.75) .. (408.67,116.75) .. controls (405.91,116.75) and (403.67,114.51) .. (403.67,111.75) -- cycle ;
\draw    (408.67,111.75) -- (395.54,136.9) -- (379.17,171.25) ;
\draw  [fill={rgb, 255:red, 0; green, 0; blue, 0 }  ,fill opacity=1 ] (384.17,171.26) .. controls (384.11,174.02) and (381.83,176.26) .. (379.06,176.25) .. controls (376.3,176.24) and (374.11,174) .. (374.17,171.24) .. controls (374.22,168.48) and (376.51,166.24) .. (379.27,166.25) .. controls (382.03,166.26) and (384.22,168.5) .. (384.17,171.26) -- cycle;
\draw    (408.67,111.75) -- (428.81,151.14) ;
\draw  [fill={rgb, 255:red, 0; green, 0; blue, 0 }  ,fill opacity=1 ] (423.81,151.14) .. controls (423.81,148.38) and (426.05,146.14) .. (428.81,146.14) .. controls (431.57,146.14) and (433.81,148.38) .. (433.81,151.14) .. controls (433.81,153.9) and (431.57,156.14) .. (428.81,156.14) .. controls (426.05,156.14) and (423.81,153.9) .. (423.81,151.14) -- cycle ;
\draw    (428.81,151.14) -- (379.17,171.25) ;
\draw  [fill={rgb, 255:red, 0; green, 0; blue, 0 }  ,fill opacity=1 ] (272.33,109.13) .. controls (272.33,106.37) and (274.57,104.13) .. (277.33,104.13) .. controls (280.09,104.13) and (282.33,106.37) .. (282.33,109.13) .. controls (282.33,111.89) and (280.09,114.13) .. (277.33,114.13) .. controls (274.57,114.13) and (272.33,111.89) .. (272.33,109.13) -- cycle ;
\draw  [fill={rgb, 255:red, 0; green, 0; blue, 0 }  ,fill opacity=1 ] (251.93,89.13) .. controls (251.93,86.37) and (254.17,84.13) .. (256.93,84.13) .. controls (259.69,84.13) and (261.93,86.37) .. (261.93,89.13) .. controls (261.93,91.89) and (259.69,94.13) .. (256.93,94.13) .. controls (254.17,94.13) and (251.93,91.89) .. (251.93,89.13) -- cycle ;
\draw    (256.93,89.13) -- (277.33,109.13) ;
\draw    (256.93,89.13) -- (307.33,89.25) ;
\draw    (277.33,109.13) -- (307.33,89.25) ;
\draw  [fill={rgb, 255:red, 0; green, 0; blue, 0 }  ,fill opacity=1 ] (302.33,89.25) .. controls (302.33,86.49) and (304.57,84.25) .. (307.33,84.25) .. controls (310.09,84.25) and (312.33,86.49) .. (312.33,89.25) .. controls (312.33,92.01) and (310.09,94.25) .. (307.33,94.25) .. controls (304.57,94.25) and (302.33,92.01) .. (302.33,89.25) -- cycle ;
\draw    (321.67,81.13) -- (351.67,21.25) ;
\draw    (381.67,81.75) -- (351.67,21.25) ;
\draw    (351.67,66.25) -- (351.67,21.25) ;
\draw  [fill={rgb, 255:red, 0; green, 0; blue, 0 }  ,fill opacity=1 ] (346.67,21.25) .. controls (346.67,18.49) and (348.91,16.25) .. (351.67,16.25) .. controls (354.43,16.25) and (356.67,18.49) .. (356.67,21.25) .. controls (356.67,24.01) and (354.43,26.25) .. (351.67,26.25) .. controls (348.91,26.25) and (346.67,24.01) .. (346.67,21.25) -- cycle ;
\draw    (321.67,81.13) -- (381.67,81.75) ;
\draw    (321.67,81.13) -- (351.67,66.25) ;
\draw    (351.67,66.25) -- (381.67,81.75) ;
\draw  [fill={rgb, 255:red, 0; green, 0; blue, 0 }  ,fill opacity=1 ] (316.67,81.13) .. controls (316.67,78.37) and (318.91,76.13) .. (321.67,76.13) .. controls (324.43,76.13) and (326.67,78.37) .. (326.67,81.13) .. controls (326.67,83.89) and (324.43,86.13) .. (321.67,86.13) .. controls (318.91,86.13) and (316.67,83.89) .. (316.67,81.13) -- cycle ;
\draw  [fill={rgb, 255:red, 0; green, 0; blue, 0 }  ,fill opacity=1 ] (376.67,81.75) .. controls (376.67,78.99) and (378.91,76.75) .. (381.67,76.75) .. controls (384.43,76.75) and (386.67,78.99) .. (386.67,81.75) .. controls (386.67,84.51) and (384.43,86.75) .. (381.67,86.75) .. controls (378.91,86.75) and (376.67,84.51) .. (376.67,81.75) -- cycle ;
\draw    (381.33,111.08) -- (368.2,136.23) -- (351.83,170.58) ;
\draw    (321.33,110.47) -- (351.83,170.58) ;
\draw    (351.44,112.89) -- (351.83,170.58) ;
\draw  [fill={rgb, 255:red, 0; green, 0; blue, 0 }  ,fill opacity=1 ] (356.83,170.6) .. controls (356.78,173.36) and (354.49,175.59) .. (351.73,175.58) .. controls (348.97,175.58) and (346.78,173.33) .. (346.83,170.57) .. controls (346.89,167.81) and (349.17,165.58) .. (351.94,165.58) .. controls (354.7,165.59) and (356.89,167.84) .. (356.83,170.6) -- cycle ;
\draw    (321.33,110.47) -- (381.33,111.08) ;
\draw    (321.33,110.47) -- (351.33,95.58) ;
\draw    (351.33,95.58) -- (381.33,111.08) ;
\draw  [fill={rgb, 255:red, 0; green, 0; blue, 0 }  ,fill opacity=1 ] (316.33,110.47) .. controls (316.33,107.71) and (318.57,105.47) .. (321.33,105.47) .. controls (324.09,105.47) and (326.33,107.71) .. (326.33,110.47) .. controls (326.33,113.23) and (324.09,115.47) .. (321.33,115.47) .. controls (318.57,115.47) and (316.33,113.23) .. (316.33,110.47) -- cycle ;
\draw  [fill={rgb, 255:red, 0; green, 0; blue, 0 }  ,fill opacity=1 ] (346.33,95.58) .. controls (346.33,92.82) and (348.57,90.58) .. (351.33,90.58) .. controls (354.09,90.58) and (356.33,92.82) .. (356.33,95.58) .. controls (356.33,98.34) and (354.09,100.58) .. (351.33,100.58) .. controls (348.57,100.58) and (346.33,98.34) .. (346.33,95.58) -- cycle ;
\draw  [fill={rgb, 255:red, 0; green, 0; blue, 0 }  ,fill opacity=1 ] (376.33,111.08) .. controls (376.33,108.32) and (378.57,106.08) .. (381.33,106.08) .. controls (384.09,106.08) and (386.33,108.32) .. (386.33,111.08) .. controls (386.33,113.84) and (384.09,116.08) .. (381.33,116.08) .. controls (378.57,116.08) and (376.33,113.84) .. (376.33,111.08) -- cycle ;
\draw  [fill={rgb, 255:red, 0; green, 0; blue, 0 }  ,fill opacity=1 ] (346.67,66.25) .. controls (346.67,63.49) and (348.91,61.25) .. (351.67,61.25) .. controls (354.43,61.25) and (356.67,63.49) .. (356.67,66.25) .. controls (356.67,69.01) and (354.43,71.25) .. (351.67,71.25) .. controls (348.91,71.25) and (346.67,69.01) .. (346.67,66.25) -- cycle ;
\draw    (351.33,94.81) -- (351.67,108.5) ;
\draw (164.95,71.92) node [anchor=north west][inner sep=0.75pt]   [align=left] {{\footnotesize $3$}};
\draw (167.95,34.72) node [anchor=north west][inner sep=0.75pt]   [align=left] {{\footnotesize $4$}};
\draw (90.95,74.92) node [anchor=north west][inner sep=0.75pt]   [align=left] {{\footnotesize $1$}};
\draw (196.95,96.32) node [anchor=north west][inner sep=0.75pt]   [align=left] {{\footnotesize $5$}};
\draw (113.95,100.92) node [anchor=north west][inner sep=0.75pt]   [align=left] {{\footnotesize $2$}};
\draw (143.95,154.92) node [anchor=north west][inner sep=0.75pt]   [align=left] {{\footnotesize $6$}};
\draw (215.95,135.32) node [anchor=north west][inner sep=0.75pt]   [align=left] {{\footnotesize $7$}};
\draw (152.93,191.33) node [anchor=north west][inner sep=0.75pt]   [align=left] {{\small $G$}};
\draw (274.6,191.33) node [anchor=north west][inner sep=0.75pt]   [align=left] {{\small Maximal cliques of $G$}};

\end{tikzpicture}

    \caption{The decomposable graph $G$ has $4$ maximal cliques. Two of them have $3$ vertices and the other two have $4$ vertices.}
    \label{fig:associated tree}
\end{figure}
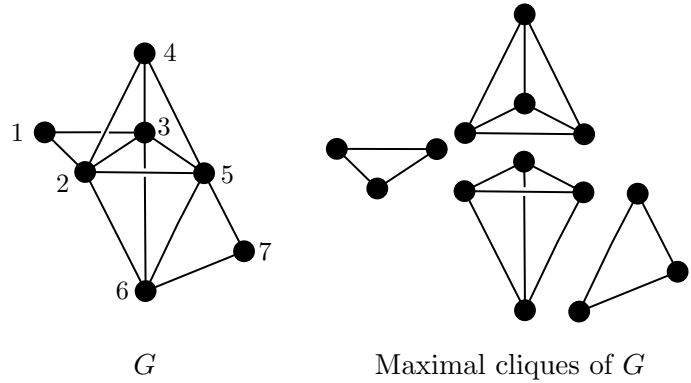

\begin{conjecture}[{\cite[Conjecture 24]{BI}}]\label{conj:dim}
Let $G$ be the undirected graphical model that is modelling a generic $n$-player game $X$ with binary choices in normal form. Let $\mathcal{C} = \text{global}(G)$ and $\Mc$ be the discrete conditional independence model of $G$. Then, the corresponding Spohn CI variety $\Vc$ has codimension $n$ in $\Mc$.
\end{conjecture}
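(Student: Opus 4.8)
The plan is to bound the codimension from both sides, the upper bound being immediate and the lower bound being the real content. Since all players have binary choices, each Spohn matrix $M_i$ in \eqref{eq: Spohn matrices} is a $2\times 2$ matrix of linear forms, so $\V$ is cut out inside $\P(V)$ by exactly the $n$ determinants $\det M_1,\ldots,\det M_n$, and $\Vc$ is defined in $\Mc$ by these same $n$ equations. By the Hammersley--Clifford theorem and Proposition~\ref{prop: parametrized model}, the relevant locus $\Mc\setminus\W$ is a dense open subset of the irreducible toric variety $T$ parametrized by the clique monomials, of dimension $D$ equal to the number of non-empty faces of the clique complex (Proposition~\ref{prop: dim formula}). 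Cutting $T$ by $n$ hypersurfaces, Krull's height theorem gives at once that every component of $(\V\cap\Mc)\setminus\W$ has codimension at most $n$; hence $\Vc$ has codimension $\le n$, and it suffices to prove that for generic $X$ no component has dimension exceeding $D-n$.

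To control the generic fibre I would pass to the universal family over the space of binary games. Writing $\mathbb{A}^N$, $N=n\,2^n$, for the affine space of payoff tensors, I set
\[
\mathcal{I}\;=\;\bigl\{(P,X)\in(\Mc\setminus\W)\times\mathbb{A}^N\;:\;\det M_i(P)=0\ \text{for all}\ i\in[n]\bigr\},
\]
with projections $\pi_1\colon\mathcal{I}\to\Mc\setminus\W$ and $\pi_2\colon\mathcal{I}\to\mathbb{A}^N$. The key structural observation is that, for fixed $P$, the equation $\det M_i(P)=0$ is \emph{linear} in the entries of the single tensor $X^{(i)}$, and these $n$ linear forms involve pairwise disjoint coordinate blocks of $X$. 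Moreover each form is non-zero on $\Mc\setminus\W$: its coefficients are products $p_{+\cdots+k+\cdots+}\,p_{j_1\cdots j_n}$ of a marginal of player~$i$ with a joint coordinate, and since $p_{+\cdots+}\neq 0$ forces one of the two marginals of player~$i$ to be non-zero while every $p_{j_1\cdots j_n}\neq 0$ on $\Mc\setminus\W$, at least one coefficient survives. Thus the fibres of $\pi_1$ are the linear subspaces cut out by $n$ independent forms, all irreducible of the same dimension $N-n$; as the base $\Mc\setminus\W$ is irreducible, $\mathcal{I}$ is irreducible of dimension $\dim\mathcal{I}=D+N-n$.

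I would then read off the fibres of $\pi_2$, which are exactly the sets $(\V\cap\Mc)\setminus\W$ whose closures are the Spohn CI varieties. By the fibre-dimension theorem the generic fibre has dimension $\dim\mathcal{I}-\dim\overline{\pi_2(\mathcal{I})}$, and every non-empty fibre has dimension at least this value. This yields a clean dichotomy: if $\pi_2$ is dominant then the generic Spohn CI variety has dimension exactly $D-n$, i.e.\ codimension $n$ in $\Mc$; otherwise the generic game has no CI equilibria and $\Vc=\emptyset$. The only missing ingredient is dominance of $\pi_2$, which I would reduce to a single explicit computation: it suffices to exhibit one game $X_0$ with a point $P_0\in\Mc\setminus\W$ satisfying $\det M_i(P_0)=0$ at which $d\pi_2$ is surjective, or equivalently one game whose fibre attains the minimal dimension $D-n$. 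Indeed, since every non-empty fibre has dimension $\ge \dim\mathcal{I}-\dim\overline{\pi_2(\mathcal{I})}$, a single fibre of dimension $D-n$ forces $\dim\overline{\pi_2(\mathcal{I})}\ge N$, hence dominance; the numerology is consistent because the $n$ singleton vertices always lie in the clique complex, so $D\ge n$.

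The main obstacle is precisely this last step, namely showing that a generic binary game possesses a complex totally mixed CI equilibrium of the expected dimension, so that $\pi_2$ is genuinely dominant rather than merely having fibres of codimension $\ge n$ wherever they are non-empty. I expect to settle it either by constructing $(P_0,X_0)$ with surjective differential directly, or by degenerating $G$ to a configuration where the statement is already available, such as the one-edge graph of \cite{IJ} or, more systematically, a decomposable graph whose parametrization factors through its cliques so that the $n$ determinantal conditions can be solved clique by clique, with genericity of $X$ then propagated back by semicontinuity. The remaining verifications---irreducibility of the toric base, non-vanishing of the linear forms, and the bookkeeping of $\dim\mathcal{I}$---are routine.
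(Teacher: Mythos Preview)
Your incidence--variety strategy is genuinely different from the paper's proof and is structurally sound: the observation that $\det M_i(P)$ is linear in $X^{(i)}$ with pairwise disjoint blocks, so that $\pi_1$ has irreducible linear fibres of constant codimension $n$, is correct and immediately gives $\dim\mathcal I=D+N-n$. The only outstanding step is the one you isolate, dominance of $\pi_2$, but this can be settled far more cheaply than you suggest. Since the edgeless graph is a subgraph of $G$, the Segre variety $(\P^1)^n$ sits inside $\Mc$ (this is the paper's Lemma~\ref{lemma:inc graphs}); and for $n\ge 2$ a generic binary game possesses totally mixed complex Nash equilibria---the intersection number of the $n$ multilinear Nash hypersurfaces in $(\P^1)^n$ is the derangement number $!n\ge 1$. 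Any such point lies in $\Mc\setminus\W$ (for generic $X$ it avoids $\{p_{++\cdots+}=0\}$ as well), so $\pi_2$ hits a Zariski--dense set of games and is dominant. With this one sentence your argument closes.

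The paper proceeds quite differently. It pulls the Spohn determinants back through the monomial parametrization $\phi\colon\mathbb{T}\to\Mc$, obtaining polynomials $F_i$ which generate linear systems $\Lambda_i$ as $X^{(i)}$ varies; it then computes explicit generators of $\Lambda_i$ (Lemma~\ref{lemma:gens lin syst}) and shows the base loci lie inside $\phi^{-1}(\W)$ (Lemma~\ref{lemma:base locus graphs}), after which Bertini yields codimension $n$ for generic $X$. This route is heavier on explicit computation but purchases more: the base--locus analysis is precisely what drives the smoothness statement of Theorem~\ref{theo:smooth}, which your fibre--dimension argument does not see. Conversely, your approach is more conceptual for the dimension claim alone and transparently explains why the locus of ``bad'' games is Zariski--closed.
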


The specification on binary choices also allowed us to prove some universality theorems in Section~\ref{sec: Nash CI} for Nash CI varieties which are Spohn CI varieties for undirected graphical models that are disjoint union of cliques. In this setting, the Spohn variety and $\Mc$ are projective subvarieties in the projective space $\P^{2^n-1}$ defined by  the determinants of the $2 \times 2$ matrices of linear forms
\begin{equation*}\label{eq: Spohn matrices 2}
M_i \,=\,\,\begin{bmatrix}

\,\,p_{+\cdots + 1 + \cdots+}\, &\,\, \displaystyle\sum_{j_1 = 1}^{d_1} \cdots \widehat{\displaystyle\sum_{j_{i} = 1}^{d_{i}}} \cdots \displaystyle\sum_{j_n = 1}^{d_n} X^{(i)}_{j_1 \cdots  1  \cdots j_n} p_{j_1 \cdots  1  \cdots j_n}\, \\
\,\,p_{+\cdots + 2 + \cdots+}\, &\,\, \displaystyle\sum_{j_1 = 1}^{d_1} \cdots \widehat{\displaystyle\sum_{j_{i} = 1}^{d_{i}}} \cdots \displaystyle\sum_{j_n = 1}^{d_n} X^{(i)}_{j_1 \cdots  2  \cdots j_n} p_{j_1 \cdots  2  \cdots j_n}\,
\end{bmatrix} \! \text{, for }i\in[n].
\end{equation*}

In particular, the Spohn CI variety $\Vc$ has codimension at most $n$ in $\Mc$.

\para

\section{Dimension of Spohn CI varieties}\label{sec: dimension of Spohn CI}

In this section, we prove Conjecture \ref{conj:dim} for any undirected graphical model. In \cite{IJ}, the conjecture is proven for one-edge Bayesian networks and equivalently one-edge undirected graphical models. In this case, the discrete conditional independence model is a Segre variety (Corollary~\ref{co:segre graph}). The parametrization of this Segre variety plays a fundamental role in the computation of the dimension of this Spohn CI variety (Section~\ref{subsec: graphical models and CI equilibria}). Let $G = ([n], E)$ be an undirected graphical model with $n$ vertices, and let $\mathcal{C}(G)$ be the set of the maximal cliques of $G$.
For a clique $C\in\mathcal{C}(G)$, we consider the torus
 \begin{equation}\label{eq:torus graph}
\mathbb{T}_C:=\left(\C^{*}\right)^{2^{|C|}} \text{ with coordinates }\sigma^{(C)}_{j_{C}} \text{ for } j_C=(j_i)_{i\in[C]}\in [2]^{|C|},
 \end{equation}
 where $[C]$ denotes the set of vertices of $C$ and $|C|$ denotes the number of vertices.
 
 By Proposition~\ref{prop: parametrized model}, we consider the homogenized parametrization of the affine cone of the independence model $\widetilde{\Mc}$ as the following map 
\[
\begin{array}{cccc}
\phi:&
\mathbb{T}:=\displaystyle\prod_{C\in\mathcal{C}(G)}\mathbb{T}_C
& \longrightarrow & \P(V),
\end{array}
\]
given by 
\begin{equation}\label{eq:para decomposable}
   p_{j_{1}\cdots j_{n}} = \displaystyle\prod_{C\in\mathcal{C}(G)}\sigma^{(C)}_{j_{C}}.
\end{equation}
  
Now, we evaluate the determinants of the matrices $M_1,\ldots,M_n$ in \eqref{eq: Spohn matrices} by \eqref{eq:para decomposable}. 
 This is the same strategy used in \cite{IJ} for computing the equations of the Nash CI curve. As in the Nash CI curve case, we distinguish two cases depending on whether the graph has isolated vertices or not.
For $i\in[n]$, let $G_i$ be the connected component of $G$ containing $i$. 
We denote the set of maximal cliques of $G_i$ by $\mathcal{C}(G)_i$, and we consider the set $N_{G}(i)$ of the vertices in $G_i$ distinct than $i$. 
In other words, $N_{G}(i)$ is the set of vertices of $G$ distinct than $i$ that are connected to the vertex $i$. The cardinal of $N_G(i)$ is denoted by $c_i$. Note that if $i$ is an isolated vertex, $N_G(i)$ is empty.
Now, for $j=(j_k)_{k\in N_G(i)}\in[2]^{c_i}$ and $a\in[2]$, we consider the index $j(a)=(j_k)_{k\in N_G(i)\cup\{i\}}\in[2]^{c_i+1}$ where $j_i= a$. Given such index and a clique $C\in \mathcal{C}(G)_i$, we also consider the index $j_C(a)=(j_k)_{k\in[C]}\in [2]^{|C|}$, where $j_i=a$. Note that a clique $C\in \mathcal{C}(G)_i$ might not contain the vertex $i$, in which case, $j_C(a)=j_C =(j_k)_{k\in[C]}$.
Using this notation, we define the monomial and the payoff entry 
\[
\mathfrak{S}^{(i)}_{j,a}:= \prod_{C\in\mathcal{C}(G)_i}\sigma^{(C)}_{j_C(a)}\]
for  $a\in[2]$ and $j=(j_k)_{k\in N_G(i)}\in[2]^{c_i}$.  
Then, the evaluation of the determinant of $M_i$ at \eqref{eq:para decomposable} is the determinant of the matrix
\begin{equation}\label{eq:det graphs}
    \begin{pmatrix}
        \displaystyle\sum_{j\in [2]^{c_i}}   \displaystyle\sum_{j'\in [2]^{n-c_i-1}}\mathfrak{S}_{j,1}^{(i)}\prod_{C\not\in\mathcal{C}(G)_i}\sigma^{(C)}_{j'_C}
        & \,\,\,
 \displaystyle\sum_{j\in [2]^{c_i}}   \displaystyle\sum_{j'\in [2]^{n-c_i-1}} X^{(i)}_{\cdots 1\cdots }\,\,\mathfrak{S}_{j,1}^{(i)}\prod_{C\not\in\mathcal{C}(G)_i}\sigma^{(C)}_{j'_C}
        \\ \noalign{\vspace*{4mm}}
          \displaystyle\sum_{j\in [2]^{c_i}}   \displaystyle\sum_{j'\in [2]^{n-c_i-1}} \mathfrak{S}_{j,2}^{(i)}\prod_{C\not\in\mathcal{C}(G)_i}\sigma^{(C)}_{j'_C}
        & \,\,\,  \displaystyle\sum_{j\in [2]^{c_i}}   \displaystyle\sum_{j'\in [2]^{n-c_i-1}}  X^{(i)}_{\cdots 2\cdots }\,\,\mathfrak{S}_{j,2}^{(i)}\prod_{C\not\in\mathcal{C}(G)_i}\sigma^{(C)}_{j'_C}
        
    \end{pmatrix}.
\end{equation}
By $X^{(i)}_{\cdots a \cdots}$, we mean the payoff entries that correspond to the parametrization $$\mathfrak{S}_{j,a}^{(i)}\prod_{C\not\in\mathcal{C}(G)_i}\sigma^{(C)}_{j'_C}$$ on each term of the sum. From the first column of \eqref{eq:det graphs}, we deduce that the determinant of \eqref{eq:det graphs} is the product of 
\begin{equation}    \label{eq:factor 1}
    \displaystyle\sum_{j\in [2]^{n-c_i-1}}
    \prod_{C\not\in\mathcal{C}(G)_i}\sigma^{(C)}_{j_C}
\end{equation}
and the polynomial 
\begin{equation}\label{eq: F gen graphs}
\mathrm{det}   \begin{pmatrix}
        \displaystyle\sum_{j\in [2]^{c_i}} \mathfrak{S}_{j,1}^{(i)}
        & \,\,\,
  \displaystyle\sum_{j\in [2]^{c_i}}   \displaystyle\sum_{j'\in [2]^{n-c_i-1}} X^{(i)}_{\cdots 1\cdots}\,\,\mathfrak{S}_{j,1}^{(i)}\prod_{C\not\in\mathcal{C}(G)_i}\sigma^{(C)}_{j'_C}
        \\ \noalign{\vspace*{4mm}} 
         \displaystyle\sum_{j\in [2]^{c_i}} \mathfrak{S}_{j,2}^{(i)}
        &  \,\,\,  \displaystyle\sum_{j\in [2]^{c_i}}   \displaystyle\sum_{j'\in [2]^{n-c_i-1}} X^{(i)}_{\cdots 2\cdots }\,\,\mathfrak{S}_{j,2}^{(i)}\prod_{C\not\in\mathcal{C}(G)_i}\sigma^{(C)}_{j'_C}
        
    \end{pmatrix}
\end{equation}
We define the polynomial $F_i$ as the determinant \eqref{eq: F gen graphs}. Note that if $G_i = G$, then (\ref{eq:factor 1}) is 1. A similar factorization can also be observed in Proposition~\ref{prop:prod graphs}.
Assume now that $i$ is an isolated vertex of the graph. By abuse of notation, we also denote the maximal clique defined by this isolated vertex by $i$. In this case, the determinant \eqref{eq: F gen graphs} is
\[
\mathrm{det}   \begin{pmatrix}
        \displaystyle\sigma_{1}^{(i)}
        &\,\,\, 
 \displaystyle\sum_{j\in [2]^{n-1}} X^{(i)}_{j_1\cdots 1\cdots j_n}\sigma_{1}^{(i)}\prod_{C\in\mathcal{C}(G)\setminus\{i\}}\sigma^{(C)}_{j_C}
        \\ \noalign{\vspace*{4mm}} 
         \displaystyle\sigma_{2}^{(i)}
        & \,\,\, \displaystyle\sum_{j\in [2]^{n-1}}  X^{(i)}_{j_1\cdots 2\cdots j_n}\sigma_{2}^{(i)}\prod_{C\in\mathcal{C}(G)\setminus\{i\}}\sigma^{(C)}_{j_C}
    \end{pmatrix}.
\]
We obtain that the above determinant is the product of $\sigma_1^{(i)}\sigma_2^{(2)}$ and the determinant
\begin{equation}
    \label{eq:det iso vertes graphs}
    \mathrm{det}   \begin{pmatrix}
        \displaystyle 1
        & \,\,\,
 \displaystyle\sum_{j\in [2]^{n-1}} X^{(i)}_{j_1\cdots 1\cdots j_n}\prod_{C\in\mathcal{C}(G)\setminus\{i\}}\sigma^{(C)}_{j_C}
        \\ \noalign{\vspace*{4mm}} 
         \displaystyle 1
        &  \,\,\, \displaystyle\sum_{j\in [2]^{n-1}}  X^{(i)}_{j_1\cdots 2\cdots j_n}\prod_{C\in\mathcal{C}(G)\setminus\{i\}}\sigma^{(C)}_{j_C}
        
    \end{pmatrix}.
\end{equation}
For an isolated vertex $i$ of the graph, we define the polynomial $F_i$ as the determinant \eqref{eq:det iso vertes graphs}.
We denote the variety defined by $F_1,\ldots,F_n$ in $\mathbb{T}$ by $Y_X$. By construction $Y_X$ is contained in $\phi^{-1}(\Mc\cap\V)$. 

\para 

\begin{lemma}
    \label{lemma:spohn contained}
    For any $X$, $\phi^{-1}(\Vc)$ is contained in $Y_X$.
\end{lemma}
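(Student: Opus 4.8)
The plan is to exploit the factorization of $\det M_i$ along the parametrization that was just computed. Writing $E_i$ for the extra factor — the polynomial \eqref{eq:factor 1} when $i$ is not isolated, and $\sigma_1^{(i)}\sigma_2^{(i)}$ when $i$ is isolated — the computation gives $(\det M_i)\circ\phi = E_i\cdot F_i$ on all of $\mathbb{T}$. Since $\V$ is closed and $\Vc=\overline{(\V\cap\Mc)\setminus\mathcal{W}}$ is the closure of a subset of $\V$, we have $\Vc\subseteq\V$; hence every $\sigma\in\phi^{-1}(\Vc)$ satisfies $(\det M_i)(\phi(\sigma))=0$, i.e. $E_i(\sigma)F_i(\sigma)=0$ for all $i\in[n]$. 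The goal is to upgrade this to $F_i(\sigma)=0$.

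The first step is to identify $E_i$ with a divisor of the partition function. Splitting the maximal cliques into those contained in the connected component $G_i$ and the rest, the marginal $p_{+\cdots+}$ evaluated along \eqref{eq:para decomposable} factors as $p_{+\cdots+}(\phi(\sigma)) = Z_{G_i}(\sigma)\cdot E_i(\sigma)$, where $Z_{G_i}$ is the analogous sum over the component $G_i$; for an isolated vertex $E_i=\sigma_1^{(i)}\sigma_2^{(i)}$ is a unit on $\mathbb{T}$. Because every coordinate $p_{j_1\cdots j_n}=\prod_C\sigma^{(C)}_{j_C}$ is already nonzero on the torus, the only way $\phi(\sigma)$ can lie on $\mathcal{W}$ is through $p_{+\cdots+}=0$. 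Therefore, on the set $S:=(\V\cap\Mc)\setminus\mathcal{W}=\Vc\setminus\mathcal{W}$ we have $p_{+\cdots+}(\phi(\sigma))\neq0$, so $E_i(\sigma)\neq0$ for every $i$, and $E_iF_i=0$ forces $F_i(\sigma)=0$. This shows $\phi^{-1}(S)\subseteq Y_X$.

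It remains to pass to the closure, and this is where I expect the real work to lie. Since $Y_X$ is closed, it suffices to prove $\phi^{-1}(S)$ is dense in $\phi^{-1}(\Vc)=\phi^{-1}(\overline S)$. Here I would use that $\phi$ factors as $\mathbb{T}\xrightarrow{\psi}\phi(\mathbb{T})\hookrightarrow\P(V)$, where $\psi$ is a surjective homomorphism of algebraic tori onto its image. The image is closed in the ambient torus $\mathbb{T}_{\P(V)}$ of $\P(V)$, and in fact $\phi(\mathbb{T})=\Mc\cap\mathbb{T}_{\P(V)}$; in particular $S\subseteq\phi(\mathbb{T})$, so $\Vc\cap\phi(\mathbb{T})$ is exactly the closure of $S$ inside $\phi(\mathbb{T})$. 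A torus quotient is flat and open, and for any continuous open surjection the preimage of a closure equals the closure of the preimage. Applying this to $\psi$ gives $\phi^{-1}(\Vc)=\psi^{-1}\big(\Vc\cap\phi(\mathbb{T})\big)=\overline{\psi^{-1}(S)}=\overline{\phi^{-1}(S)}\subseteq Y_X$.

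The main obstacle is precisely this density statement: functoriality only yields $\overline{\phi^{-1}(S)}\subseteq\phi^{-1}(\Vc)$, and the reverse inclusion can fail for an arbitrary morphism, so the openness of the torus quotient $\psi$ is doing the essential work. An equivalent way to package the same point, avoiding an explicit appeal to openness, is to argue by irreducible components: if a component $Z'$ of $\phi^{-1}(\Vc)$ were not contained in $Y_X$, then some $F_i$ would be nonzero on a dense open subset of $Z'$, forcing $E_i\equiv0$ and hence $p_{+\cdots+}\circ\phi\equiv0$ on $Z'$, so that $\phi(Z')\subseteq\mathcal{W}$; excluding such a component again reduces to knowing that $\phi^{-1}(\Vc\setminus\mathcal{W})$ is dense in $\phi^{-1}(\Vc)$.
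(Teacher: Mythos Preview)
Your argument follows the same route as the paper's: exploit the factorization $(\det M_i)\circ\phi=E_i\cdot F_i$, observe that the vanishing of $E_i$ forces $\phi(\sigma)$ into $\mathcal{W}$, and conclude. The paper shows only that $E_i=0$ implies $p_{+\cdots+1+\cdots+}=0$ (which is not literally one of the hyperplanes in $\mathcal{W}$, though summing over the strategies of player $i$ immediately gives $p_{+\cdots+}=0$); your observation that $E_i$ divides the full partition sum $p_{+\cdots+}\circ\phi$ is the cleaner way to land directly in $\mathcal{W}$.

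Where you go beyond the paper is in isolating and addressing the closure step. The paper simply asserts that ``$Y_X$ is obtained by removing some components from $\phi^{-1}(\Mc\cap\V)$ contained in the preimage of the hyperplanes'' and concludes, leaving implicit the passage from $\phi^{-1}(S)\subseteq Y_X$ to $\phi^{-1}(\overline{S})\subseteq Y_X$. Your use of the openness of the torus quotient $\psi:\mathbb{T}\to\phi(\mathbb{T})$ is the right tool for this and makes the argument rigorous. The one point you should justify further is the identity $\phi(\mathbb{T})=\Mc\cap\mathbb{T}_{\P(V)}$; this is an algebraic form of the Hammersley--Clifford theorem (a strictly nonzero tensor satisfying the global Markov quadrics admits a clique factorization), and while true, it is not entirely trivial and deserves either a short argument or a reference such as \cite{GMS06} or \cite[Chapter~13]{Sul}. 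Your alternative component-wise approach, as you yourself note, does not avoid this density question.
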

\begin{proof}
 To construct the polynomials $F_1,\ldots,F_n$ we have removed some factors of the determinant \eqref{eq:det graphs} for when G is not connected. The image via $\phi$ of the varieties defined by each of these factors
are contained in some of the hyperplanes $\{p_{j_1j_2 \cdots j_n}=0\}$ and $\{p_{++ \cdots +} = 0\}$. Assume that the factor \eqref{eq:factor 1} vanishes. By \eqref{eq:para decomposable} we get that 
\[
p_{+\cdots+1+\cdots+}=
 \displaystyle\left(\sum_{j\in [2]^{n-c_i-1}}
    \prod_{C\not\in\mathcal{C}(G)_i}\sigma^{(C)}_{j_C}\right)
    \left(
\sum_{j\in [2]^{c_i}}
    \prod_{C\in\mathcal{C}(G)_i}\sigma^{(C)}_{j_C(1)}
    \right)=0.
\]
Therefore, $Y_X$ is obtained by removing some components from $\phi^{-1}(\Mc\cap\V)$ contained in the preimage via $\phi$ of the hyperplanes $\{p_{j_1j_2 \cdots j_n}=0\}$ and $\{p_{++ \cdots +} = 0\}$. 
We deduce that the preimage of the Spohn CI variety $\Vc$ through $\phi$ is contained in $Y_X$. 
\end{proof}

\para 

Our strategy is to analyse the dimension of $Y_X$ to compute the dimension of $\Vc$. To do so, we analyse the base loci of the linear systems defined by   $F_1,\ldots,F_n$. 

\para 

Note that $F_i$ is a multihomogeneous polynomial in the coordinates of $\mathbb{T}$. Its multidegree depends on whether $i$ is an isolated vertex or not. 
Assume that $i\in[n]$ is an isolated vertex. Then,
for $C\in\mathcal{C}(G)$, the degree of $F_i$ in the coordinates of $\mathbb{T}_C$ 
is $0$ if $C=i$ and $1$ otherwise. In other words, the multidegree of $F_i$ is given by the integer vector where $e_C$ is the canonical basis element:
\[
\sum_{C\in\mathcal{C}(G)\setminus\{i\}}e_C. 
\]
Assume now that $i$ is not an isolated vertex.
The multidegree of $F_i$ is given by the integer vector
\[
\displaystyle\sum_{C\not\in\mathcal{C}(G)_i}e_C+\sum_{C\in\mathcal{C}(G)_i}2e_C.
\]
We denote the space of multihomogeneous polynomials in the coordinates of $\mathbb{T}$, of the same multidegree as $F_i$, 
by $V_i$. 
In particular, $F_i$ is contained in $V_i$ for any game $X$.
For $i\in[n]$ we consider the linear map 
\[
\begin{array}{ccc}
\mathbb{R}^{2^n}&\longrightarrow&V_i\\
X^{(i)}&\longmapsto&F_i.
\end{array}
\]
We denote the image of this map by $\Lambda_i$. We use Bertini's Theorem (see \cite[Theorem 8.18]{Hart}) to compute the dimension of $Y_X$. To apply this strategy, we analyse the base locus of $\Lambda_i$.
First, if $i$ is an isolated vertex, as in \cite[Section 4.1]{IJ} one obtains that $\Lambda_i=V_i$. Now, assume that $i$ is not an isolated vertex.
Then, $F_i$ can be written as a linear combination of polynomials that are the product of a determinant of the form
\begin{equation}\label{eq:factors of F_i}
\mathrm{det}   \begin{pmatrix}
        \displaystyle\sum_{j\in [2]^{c_i}} \mathfrak{S}_{j,1}^{(i)}
        & \,\,\,
 \displaystyle\sum_{j\in [2]^{c_i}} Y^{(i)}_{j(1)}\mathfrak{S}_{j,1}^{(i)}
        \\  \noalign{\vspace*{4mm}}
         \displaystyle\sum_{j\in [2]^{c_i}} \mathfrak{S}_{j,2}^{(i)}
        & \,\,\, \displaystyle\sum_{j\in [2]^{c_i}}  Y^{(i)}_{j(2)}\mathfrak{S}_{j,2}^{(i)}
        
    \end{pmatrix},
\end{equation}
for some $Y^{(i)}_{j(1)},Y^{(i)}_{j(2)}\in\mathbb{R}$,
and a multihomogeneous polynomial $L$ of multidegree
\begin{equation}\label{eq:multdegree}
\sum_{C\not\in\mathcal{C}(G)_i}e_C.
\end{equation}
 Moreover, for any polynomial that is the product of $L$ and \eqref{eq:factors of F_i}, there exists a game $X$ such that $F_i$ equals this product. We denote the vector space of all multihomogeneous polynomials of the form \eqref{eq:factors of F_i} by $W_i$. Then, $\Lambda_i$ is the tensor product of $W_i$ and the complete linear system of multihomogeneous polynomials with multidegree \eqref{eq:multdegree}. In particular,
 $\Lambda_i$ and $W_i$ have the same base locus.

\para 

\begin{lemma}\label{lemma:gens lin syst}
    For $i\in[n]$ not being an isolated vertex, the linear system $W_i$ is generated by the polynomials
    \begin{enumerate}
        \item For $a\in[2]^{c_i}$, $\mathfrak{S}^{(i)}_{a,1}\left(\displaystyle\sum_{j\in[2]^{c_i}} \mathfrak{S}^{(i)}_{j,2}
        \right)$.
        \item For $a\in[2]^{c_i}$, $\mathfrak{S}^{(i)}_{a,2}\left(\displaystyle\sum_{j\in[2]^{c_i}} \mathfrak{S}^{(i)}_{j,1}
        \right)$.
        \item $
         \mathfrak{S}^{(i)}_{\mathbbm{1},1} \mathfrak{S}^{(i)}_{\mathbbm{1},2}-
         \displaystyle\sum_{j,k\in[2]^{c_i}\setminus\{\mathbbm{1}\}} \mathfrak{S}^{(i)}_{j,1}\mathfrak{S}^{(i)}_{k,2}
        $, where $\mathbbm{1}=(1,\ldots,1)\in [2]^{c_i}$.
    \end{enumerate}
\end{lemma}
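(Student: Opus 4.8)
The plan is to expand the $2\times 2$ determinant \eqref{eq:factors of F_i} and to read off the linear span $W_i$ directly as the payoff scalars vary. Abbreviating the two entries of the first column by $A^{(i)}_a:=\sum_{j\in[2]^{c_i}}\mathfrak{S}^{(i)}_{j,a}$ for $a\in[2]$, the determinant equals
\[
A^{(i)}_1\Bigl(\sum_{j\in[2]^{c_i}}Y^{(i)}_{j(2)}\mathfrak{S}^{(i)}_{j,2}\Bigr)-A^{(i)}_2\Bigl(\sum_{j\in[2]^{c_i}}Y^{(i)}_{j(1)}\mathfrak{S}^{(i)}_{j,1}\Bigr).
\]
By the discussion preceding the statement, the scalars $Y^{(i)}_{j(1)},Y^{(i)}_{j(2)}$ range independently over $\mathbb{R}$ as $X$ varies, so this is a linear form in them and $W_i$ is exactly the image of the associated linear map.

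Collecting the displayed expression by these free parameters, the coefficient of $Y^{(i)}_{a(2)}$ is $\mathfrak{S}^{(i)}_{a,2}A^{(i)}_1=\mathfrak{S}^{(i)}_{a,2}\bigl(\sum_{j\in[2]^{c_i}}\mathfrak{S}^{(i)}_{j,1}\bigr)$, and the coefficient of $Y^{(i)}_{a(1)}$ is $-\mathfrak{S}^{(i)}_{a,1}A^{(i)}_2=-\mathfrak{S}^{(i)}_{a,1}\bigl(\sum_{j\in[2]^{c_i}}\mathfrak{S}^{(i)}_{j,2}\bigr)$, for each $a\in[2]^{c_i}$. Up to sign these are exactly the polynomials in items (1) and (2), so these two families already span $W_i$. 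To finish matching the stated generating set, I would check that the polynomial in item (3) lies in this span by exhibiting it as a single member of the family: taking $Y^{(i)}_{j(1)}=1-\delta_{j,\mathbbm{1}}$ and $Y^{(i)}_{j(2)}=\delta_{j,\mathbbm{1}}$ gives the determinant
\[
A^{(i)}_1\,\mathfrak{S}^{(i)}_{\mathbbm{1},2}-A^{(i)}_2\sum_{j\neq\mathbbm{1}}\mathfrak{S}^{(i)}_{j,1},
\]
and upon expanding both products the terms $\mathfrak{S}^{(i)}_{j,1}\mathfrak{S}^{(i)}_{\mathbbm{1},2}$ with $j\neq\mathbbm{1}$ cancel, leaving $\mathfrak{S}^{(i)}_{\mathbbm{1},1}\mathfrak{S}^{(i)}_{\mathbbm{1},2}-\sum_{j,k\in[2]^{c_i}\setminus\{\mathbbm{1}\}}\mathfrak{S}^{(i)}_{j,1}\mathfrak{S}^{(i)}_{k,2}$, which is precisely (3).

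The computation is routine; the one point that needs care is that the products $\mathfrak{S}^{(i)}_{s,1}\mathfrak{S}^{(i)}_{t,2}$ are not all distinct monomials, since for a maximal clique of $G_i$ not containing $i$ one may interchange the corresponding coordinates of $s$ and $t$ without altering the product. I would emphasize that this is harmless here: because $W_i$ is defined as the image of a linear map, such coincidences only produce linear relations among the spanning polynomials of (1) and (2) and never elements outside their span. Consequently (1) and (2) span $W_i$ on the nose, while (3)—redundant as a spanning vector but recorded explicitly because it is a member of $W_i$ not of the degenerate product form of (1) and (2)—is the convenient generator for the subsequent base-locus and Bertini analysis. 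Thus the expected main obstacle is not conceptual but bookkeeping: keeping the multi-indices $j(a)$ and $j_C(a)$ straight across the cliques of $G_i$ that do and do not contain $i$.
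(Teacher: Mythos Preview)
Your proof is correct. Both arguments expand the determinant and read off a spanning set, but the parametrizations differ. The paper rewrites the determinant as $\sum_{j,k}A^{(i)}_{j,k}\mathfrak{S}^{(i)}_{k,1}\mathfrak{S}^{(i)}_{j,2}$ with $A^{(i)}_{j,k}=Y^{(i)}_{j(2)}-Y^{(i)}_{k(1)}$, notes the identity $A^{(i)}_{j,k}=A^{(i)}_{j,\mathbbm{1}}+A^{(i)}_{\mathbbm{1},k}-A^{(i)}_{\mathbbm{1},\mathbbm{1}}$, and then sets all but one of the parameters $A^{(i)}_{j,\mathbbm{1}},A^{(i)}_{\mathbbm{1},k}$ to zero; this produces the generators (1) and (2) for $a\neq\mathbbm{1}$ together with (3). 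Your route instead uses the immediate linearity of the second column in the $Y$'s, which gives all of (1) and (2) at once and then exhibits (3) as a specific member of $W_i$. A small bonus of your version is that it shows (3) is redundant as a spanning vector (it lies in the span of (1) and (2)), a fact the paper's reparametrization obscures; it also sidesteps the implicit check that the $A^{(i)}_{j,\mathbbm{1}},A^{(i)}_{\mathbbm{1},k}$ can be varied independently. Your closing remark about possible monomial coincidences among the $\mathfrak{S}^{(i)}_{s,1}\mathfrak{S}^{(i)}_{t,2}$ is accurate but unnecessary for the argument, since you are only asserting that (1) and (2) span the image of a linear map.
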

\begin{proof}
We write the determinant \eqref{eq:factors of F_i} as
\begin{equation}\label{eq:det exp}
\displaystyle\sum_{j,k\in [2]^{c_i}}  A^{(i)}_{j,k}\mathfrak{S}_{k,1}^{(i)}\mathfrak{S}_{j,2}^{(i)}, 
\end{equation}
where 
\[
A^{(i)}_{j,k} = Y^{(i)}_{j(2)}-Y^{(i)}_{k(1)}.
\]
Note that for $j,k\in [2]^{c_i}$, we have that 
\[
A^{(i)}_{j,k} -A^{(i)}_{j,\mathbbm{1}}-A^{(i)}_{\mathbbm{1},k}+A^{(i)}_{\mathbbm{1},\mathbbm{1}}=  Y^{(i)}_{j(2)}-Y^{(i)}_{k(1)}
- Y^{(i)}_{j(2)}+Y^{(i)}_{\mathbbm{1}(1)}
- Y^{(i)}_{\mathbbm{1}(2)}+Y^{(i)}_{k(1)}
+ Y^{(i)}_{\mathbbm{1}(2)}-Y^{(i)}_{\mathbbm{1}(1)}=0,
\]
and we deduce that $$A^{(i)}_{j,k} =A^{(i)}_{j,\mathbbm{1}}+A^{(i)}_{\mathbbm{1},k}-A^{(i)}_{\mathbbm{1},\mathbbm{1}}\,\,\,\text{for $j,k\neq\mathbbm{1}$}.$$ Therefore, we can write the polynomial \eqref{eq:det exp} as 
\[
\displaystyle
\sum_{j\in [2]^{c_i}}  A^{(i)}_{j,\mathbbm{1}}\mathfrak{S}_{\mathbbm{1},1}^{(i)}\mathfrak{S}_{j,2}^{(i)}+
\sum_{j\in [2]^{c_i}\setminus\{\mathbbm{1}\}}  A^{(i)}_{\mathbbm{1},j}\mathfrak{S}_{j,1}^{(i)}\mathfrak{S}_{\mathbbm{1},2}^{(i)}+
\sum_{j,k\in [2]^{c_i}\setminus\{\mathbbm{1}\}}  (A^{(i)}_{j,\mathbbm{1}}+A^{(i)}_{\mathbbm{1},k}-A^{(i)}_{\mathbbm{1},\mathbbm{1}})\mathfrak{S}_{k,1}^{(i)}\mathfrak{S}_{j,2}^{(i)}.
\]
The proof follows by fixing in the above expression all the coefficients $A^{(i)}_{j,\mathbbm{1}},A^{(i)}_{\mathbbm{1},j}$ except one to be zero.
\end{proof}

\para 

Once we have computed the generators of $W_i$, we deal with the computation of their base loci.

\para 

\begin{lemma}
    \label{lemma:base locus graphs}

    For $i\in[n]$ not being an isolated vertex, the base locus of $W_i$ is  
    \begin{equation}\label{eq:base loci}
    \bbV(G_1,G_2)\cup \bbV(\mathfrak{S}_{a,1}^{(i)}:a\in[2]^{c_i})\cup\bbV(\mathfrak{S}_{a,2}^{(i)}:a\in[2]^{c_i})
    \end{equation}
    where 
    \[
    G_1 = \displaystyle\sum_{j\in[2]^{c_i}} \mathfrak{S}^{(i)}_{j,2}\text{ and } G_2 = \displaystyle\sum_{j\in[2]^{c_i}} \mathfrak{S}^{(i)}_{j,1}
    \]
\end{lemma}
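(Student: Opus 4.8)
The base locus of $W_i$ is the common vanishing locus, in the multiprojective space compactifying $\mathbb{T}$, of the generators produced in Lemma~\ref{lemma:gens lin syst}. To lighten the notation I would abbreviate $u_a := \mathfrak{S}^{(i)}_{a,1}$ and $v_a := \mathfrak{S}^{(i)}_{a,2}$ for $a \in [2]^{c_i}$, so that $G_2 = \sum_a u_a$ and $G_1 = \sum_a v_a$, and the three families of generators become $u_a G_1$, $v_a G_2$, and the single quadric $q := u_{\mathbbm{1}} v_{\mathbbm{1}} - \sum_{j,k \neq \mathbbm{1}} u_j v_k$. The plan is to first compute the base locus of the two linear families $u_a G_1$ and $v_a G_2$, and then verify that $q$ vanishes identically on the resulting locus, so that adjoining it leaves the base locus unchanged.

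For the linear families, a point annihilates $u_a G_1$ for every $a$ precisely when $G_1 = 0$ or all $u_a = 0$; hence $\bbV(u_a G_1 : a \in [2]^{c_i}) = \bbV(G_1) \cup \bbV(u_a : a \in [2]^{c_i})$, and symmetrically $\bbV(v_a G_2 : a \in [2]^{c_i}) = \bbV(G_2) \cup \bbV(v_a : a \in [2]^{c_i})$. Intersecting the two unions and distributing produces four pieces. Since $G_2 = \sum_a u_a$ and $G_1 = \sum_a v_a$ we have the containments $\bbV(u_a : a) \subseteq \bbV(G_2)$ and $\bbV(v_a : a) \subseteq \bbV(G_1)$, which absorb two of the four pieces, and the intersection collapses to exactly $\bbV(G_1, G_2) \cup \bbV(u_a : a) \cup \bbV(v_a : a)$, the claimed set.

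It then remains to check that $q$ vanishes on each of the three components, so that it is redundant for the base locus. On $\bbV(u_a : a)$ every $u_a = 0$ and $q$ vanishes termwise, and symmetrically on $\bbV(v_a : a)$. The one genuine computation is on $\bbV(G_1, G_2)$: there $G_2 = 0$ gives $u_{\mathbbm{1}} = -\sum_{j \neq \mathbbm{1}} u_j$ and $G_1 = 0$ gives $v_{\mathbbm{1}} = -\sum_{k \neq \mathbbm{1}} v_k$, whence $u_{\mathbbm{1}} v_{\mathbbm{1}} = \big(\sum_{j \neq \mathbbm{1}} u_j\big)\big(\sum_{k \neq \mathbbm{1}} v_k\big) = \sum_{j,k \neq \mathbbm{1}} u_j v_k$ and $q = 0$. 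I expect this last substitution to be the only place where the precise shape of the quadric generator matters; the rest of the argument is formal manipulation of vanishing loci. Accordingly, the main (and only) obstacle is confirming that $q$ is redundant rather than cutting the locus down, which is settled by the short computation on $\bbV(G_1, G_2)$ above.
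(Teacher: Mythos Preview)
Your proof is correct and follows essentially the same route as the paper: both arguments identify the three components and verify that they constitute the common zero locus of the generators from Lemma~\ref{lemma:gens lin syst} via a case analysis on whether $G_1$ or $G_2$ vanishes. The only cosmetic difference is that the paper handles the quadric $q$ uniformly through the determinantal description of $W_i$ (the first column of the matrix \eqref{eq:factors of F_i} vanishes on $\bbV(G_1,G_2)$), whereas you split it off and verify its redundancy by the direct substitution $u_{\mathbbm{1}}v_{\mathbbm{1}} = \bigl(\sum_{j\neq\mathbbm{1}}u_j\bigr)\bigl(\sum_{k\neq\mathbbm{1}}v_k\bigr)$.
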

\begin{proof}
Let $Z_1,Z_2,Z_3$ be the three varieties in the union \eqref{eq:base loci} respectively, and 
let $Z$ be the variety defined by the ideal generated by all the polynomials listed in Lemma \ref{lemma:gens lin syst}. We show that $Z_1\cup Z_2\cup Z_3=Z$. First, note that the first row of the matrix in \eqref{eq:factors of F_i} vanishes at $Z_2$. In particular, the determinant \eqref{eq:factors of F_i} vanishes at $Z_2$, and hence, $Z_2$ is contained in $Z$. Similarly, $Z_3$ is contained in $Z$. Now, the first column of the matrix \eqref{eq:factors of F_i} vanishes at $Z_1$. Therefore $Z_1$ is also contained in $Z$.

\para 

Next, we assume that $p$ is a point in $Z$ not contained in $Z_1$. Then, either $G_1(p)\neq 0$ or $G_2(p)\neq 0$. Assume that $G_1(p)$ does not vanish.
Note that the  first type of polynomials in Lemma \ref{lemma:gens lin syst} are of the form $\mathfrak{S}_{a,1}^{(i)}G_1$ for $a\in[2]^{c_i}$. Since $G_1(p)\neq 0$, we deduce that $\mathfrak{S}_{a,1}^{(i)}$ vanishes at $p$ for $a\in[2]^{c_i}$. Therefore, $p$ is contained in $ Z_2$. 
Similarly, if $G_2(p)\neq 0$, then $p\in Z_3$.
We conclude that  $Z=Z_1\cup Z_2\cup Z_3$.    
\end{proof}

\para 

Lemma \ref{lemma:base locus graphs} allows us to prove Conjecture \ref{conj:dim}.

\para

\begin{theorem}\label{theo:conj decom}
Conjecture \ref{conj:dim} holds for any undirected graphical model.
\end{theorem}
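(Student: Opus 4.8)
The plan is to deduce the codimension statement from the preceding lemmas by analyzing the dimension of $Y_X$ via Bertini's theorem applied to the linear systems $\Lambda_1, \ldots, \Lambda_n$. By Lemma~\ref{lemma:spohn contained}, $\phi^{-1}(\Vc)$ is contained in $Y_X = \bbV(F_1, \ldots, F_n)$, and since $\phi$ is the parametrization of the independence model (with the special hyperplanes removed), it suffices to show that for generic $X$ each successive hypersurface section $\bbV(F_i)$ drops the dimension by exactly one, so that $Y_X$ has codimension $n$ in the torus $\mathbb{T}$. Because $\dim \mathbb{T}$ equals the number of non-empty faces of the clique complex, which by Proposition~\ref{prop: dim formula} is the dimension of $\Mc$, this yields $\codim_{\Mc} \Vc = n$.

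First I would set up the induction on $i$. The idea is to apply Bertini's theorem iteratively: having cut down by the first $i-1$ generic hypersurfaces from $\Lambda_1, \ldots, \Lambda_{i-1}$ to obtain a variety of the expected dimension, I add a generic member of $\Lambda_i$. Bertini guarantees that the dimension drops by one away from the base locus of $\Lambda_i$, so the essential point is to show the base locus of $\Lambda_i$ does not contain any component of the partially-cut intermediate variety of the expected dimension. For the isolated-vertex case, the text already notes $\Lambda_i = V_i$ is the complete linear system, so the base locus is empty and Bertini applies immediately. For the non-isolated case, I would use that $\Lambda_i$ and $W_i$ share the same base locus, and invoke Lemma~\ref{lemma:base locus graphs}, which identifies this base locus as the union
\[
\bbV(G_1,G_2)\cup \bbV(\mathfrak{S}_{a,1}^{(i)}:a\in[2]^{c_i})\cup\bbV(\mathfrak{S}_{a,2}^{(i)}:a\in[2]^{c_i}).
\]
The key structural feature I would exploit is that each piece of this base locus is contained in the vanishing of coordinate-type monomials that, when pushed forward by $\phi$, land in the special hyperplanes $\{p_{j_1\cdots j_n}=0\}$ or $\{p_{++\cdots+}=0\}$; for instance the vanishing of all $\mathfrak{S}^{(i)}_{a,1}$ forces $p_{+\cdots+1+\cdots+}=0$, exactly as in the computation in the proof of Lemma~\ref{lemma:spohn contained}. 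Since $\Vc$ is by definition the closure after removing $\mathcal{W}$, these base loci meet $Y_X$ only in components we have already excluded, so Bertini's dimension count is valid on the relevant open locus.

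The main obstacle I expect is the bookkeeping needed to verify that the base locus of $\Lambda_i$ genuinely avoids the expected-dimensional part of the intermediate variety obtained after imposing $F_1, \ldots, F_{i-1}$, rather than merely being contained in the special hyperplanes abstractly; one must check that adding $F_i$ is a nontrivial condition on each surviving component. I would handle this by arguing that the torus $\mathbb{T}$ is irreducible and the generic fiber of $\phi$ is of constant dimension, so that a generic point of the cut-down variety lies outside $\mathcal{W}$ and hence outside the base locus, making $\Lambda_i$ base-point-free there. Combined with the multidegree computations giving $\dim V_i$, this gives the exact codimension drop. Finally I would translate the statement back: $\codim_{\mathbb{T}} Y_X = n$ together with $\dim \mathbb{T} = \dim \Mc$ (via Proposition~\ref{prop: dim formula}) yields $\codim_{\Mc}\Vc = n$, and since the reverse inequality $\codim \leq n$ is immediate from the $n$ defining equations $\det M_i$, we obtain equality, proving Conjecture~\ref{conj:dim}.
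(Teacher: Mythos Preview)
Your approach is essentially the paper's: apply Bertini to the linear systems $\Lambda_i$ on $\mathbb{T}$, use Lemma~\ref{lemma:base locus graphs} to see that every base locus maps under $\phi$ into the special hyperplanes, and conclude that $Y_X$ has codimension $n$ away from these loci for generic $X$. The paper avoids your inductive bookkeeping by setting $B_X$ to be the union of all base loci at once and observing $\phi^{-1}(\Vc)\subseteq \overline{Y_X\setminus B_X}$, but this is a cosmetic difference.

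There is, however, one genuine error in your final translation. You assert that $\dim \mathbb{T}$ equals the number of non-empty faces of the clique complex, hence $\dim \Mc$. This is false: by construction $\dim \mathbb{T}=\sum_{C\in\mathcal{C}(G)}2^{|C|}$, which strictly exceeds $\dim \Mc$ whenever two maximal cliques overlap (for the line graph on four vertices $\dim\mathbb{T}=12$ while $\dim\Mc=7$). The parametrization $\phi$ has positive-dimensional fibers, so equality of dimensions is unavailable. What you actually need is the inequality
\[
\codim_{\mathbb{T}}\phi^{-1}(\Vc)\;\leq\;\codim_{\Mc}\Vc,
\]
which holds because preimage under a dominant morphism cannot increase codimension (every fiber has dimension at least $\dim\mathbb{T}-\dim\Mc$). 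Together with $\phi^{-1}(\Vc)\subseteq\overline{Y_X\setminus B_X}$ and the trivial bound $\codim_{\Mc}\Vc\leq n$, this gives the squeeze
\[
n=\codim_{\mathbb{T}}\overline{Y_X\setminus B_X}\leq \codim_{\mathbb{T}}\phi^{-1}(\Vc)\leq \codim_{\Mc}\Vc\leq n.
\]
You gesture at fiber dimensions later in your sketch, so the right idea is present; but the explicit claim $\dim\mathbb{T}=\dim\Mc$ is wrong and must be replaced by this codimension inequality, exactly as the paper does.
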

\begin{proof}
Let $\Vc$ be the Spohn CI variety of a generic game $X$ and let $\TVc$ be the preimage of $\Vc$ through the monomial map $\phi$ in \eqref{eq:para decomposable}. 
By Lemma \ref{lemma:spohn contained}, $\TVc$ is contained in $Y_X$. 
Let $B_X$ be the intersection of $Y_X$ and the union of the base loci of $\Lambda_1,\ldots, \Lambda_n$, and let $\widetilde{Y}_X$ be the Zariski closure of $Y_X\setminus B_X$ in $\mathbb{T}$. Recall that the base locus of $\Lambda_i$ is either empty if $i$ is an isolated vertex, or it is given by Lemma \ref{lemma:base locus graphs}.
By Bertini's Theorem (see \cite[Theorem 8.18]{Hart}), we get that $Y_X\setminus B_X$ and $\widetilde{Y}_X$ have codimension $n$ in $\mathbb{T}$ for a generic game $X$. Now, note that for $i\in[n]$, the image of the base locus of $\Lambda_i$ via $\phi$ is contained in the union of the hyperplanes $\{p_{j_1j_2 \cdots j_n}=0\}$ and $\{p_{++ \cdots +} = 0\}$. This implies that $\TVc$ is contained in $\widetilde{Y}_X$, and we deduce that 
\[
n= \mathrm{codim}_{\mathbb{T}} \tilde{Y}
\leq
\mathrm{codim}_{\mathbb{T}}\TVc .
\]
  Now, the proof follows from the fact that $\mathrm{codim}_{\mathbb{T}}\TVc\leq \mathrm{codim}_{\Mc}\Vc$
and  that $\mathrm{codim}_{\Mc}\Vc\leq n$.
\end{proof}

\para 

We deduce the following result as a consequence of Proposition~\ref{prop: dim formula} and Theorem~\ref{theo:conj decom}.

\para 

\begin{corollary}\label{cor: dimensional independence model}
Let $G=([n],E)$ be a discrete undirected binary graphical model. Then, for generic payoff tables,  the dimension of the Spohn CI variety $\Vc$ is the number of positive dimensional faces of the associated simplicial complex of the cliques. In other words, for generic payoff tables, the dimension of $\Vc$ is the number of cliques of $G$ with at least two vertices.
\end{corollary}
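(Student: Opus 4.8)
The plan is to combine the two results cited in the statement. The key chain is: Theorem~\ref{theo:conj decom} tells us the codimension of $\Vc$ in $\Mc$ is exactly $n$, while Proposition~\ref{prop: dim formula} gives the dimension of (the positive part of) $\Mc$ as the number of non-empty faces of the clique complex. So the dimension of $\Vc$ should be the difference of these two quantities, and the goal is to show this difference equals the number of positive-dimensional faces, equivalently the number of cliques with at least two vertices.

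First I would record that, since we work with the positive part and generic payoff tables, the dimension of $\Vc$ equals $\dim \Mc - n$. Here $\dim \Mc$ is taken in the sense of the positive part of the parametrized model, as justified by Proposition~\ref{prop: parametrized model} and the surrounding discussion; for generic $X$ the intersection with the Spohn variety drops the dimension by exactly the codimension $n$ from Theorem~\ref{theo:conj decom}. By Proposition~\ref{prop: dim formula}, $\dim\Mc$ is the number of non-empty faces of the simplicial complex of cliques of $G$.

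Next I would perform the bookkeeping on the face count. The non-empty faces of the clique complex are exactly the non-empty cliques of $G$, and these split into two groups: the $n$ faces of dimension zero (the single vertices, of which there are exactly $n$ since each vertex is a clique on one vertex), and the faces of positive dimension (the cliques on at least two vertices). Thus
\[
\dim\Mc \;=\; n \;+\; \#\{\text{cliques of } G \text{ with } \geq 2 \text{ vertices}\}.
\]
Subtracting $n$ cancels precisely the zero-dimensional faces, yielding
\[
\dim\Vc \;=\; \dim\Mc - n \;=\; \#\{\text{positive-dimensional faces}\} \;=\; \#\{\text{cliques of } G \text{ with } \geq 2 \text{ vertices}\},
\]
which is the claim.

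The only real subtlety — and the step I would be most careful about — is the transition from ``codimension $n$ in $\Mc$'' to ``$\dim\Vc = \dim\Mc - n$''. This requires that $\Mc$ be equidimensional (or at least that $\Vc$ meets a top-dimensional component) so that codimension and dimension subtract cleanly, and it tacitly uses that $\dim\Mc$ computed via the positive part of the parametrized model agrees with the dimension relevant to the Spohn CI variety after removing the special hyperplanes $\cW$. Both of these are supplied by the earlier setup: the model is parametrized by the torus $\mathbb{T}$ via the monomial map $\phi$, so its positive part is irreducible of the stated dimension, and $\Vc$ is by definition the closure of $(\V\cap\Mc)\setminus\cW$, i.e.\ it lives on this parametrized piece. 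Granting these, the corollary is a direct numerical consequence, so I expect the proof to be short, with the face-counting being entirely routine once the dimension identity is in hand.
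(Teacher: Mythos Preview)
Your proposal is correct and follows exactly the approach the paper takes: the corollary is stated as an immediate consequence of Proposition~\ref{prop: dim formula} and Theorem~\ref{theo:conj decom}, and your face-counting argument (subtracting the $n$ zero-dimensional faces) makes explicit the arithmetic that the paper leaves implicit.
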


    \para 

\begin{example}
For generic payoff tables, if $G = ([n], E)$ is a line graph or a cycle (see Figure \ref{fig:line graph}), the dimension of the Spohn CI variety is determined by counting the number of edges. This is because the clique complex consists exclusively of one-dimensional simplices. 
On the other hand in the case of the decomposable graph from Figure~\ref{fig:associated tree}, the Spohn CI variety is $31-7 = 24$ dimensional.
\end{example}

Now, we use the above analysis of the base locus the linear systems $\Lambda_i$ to study the smoothness of generic Spohn CI varieties.

\begin{theorem}\label{theo:smooth}
    For a generic $n$--player binary game $X$, the Spohn CI variety $\mathcal{V}_{X,\mathcal{C}}$ is smooth away from the hyperplanes $\{p_{j_1j_2 \cdots j_n}=0\}$ and $\{p_{++ \cdots +} = 0\}$. In particular, the set of totally mixed CI equilibria of $X$ is a smooth semialgebraic manifold. 
\end{theorem}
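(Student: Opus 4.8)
The plan is to rerun the linear-system argument of Theorem~\ref{theo:conj decom}, but to retain the \emph{smoothness} conclusion of Bertini's theorem instead of only the statement on codimension. Recall that $\TVc = \phi^{-1}(\Vc)$ sits inside $Y_X = \bbV(F_1,\dots,F_n)\subseteq\mathbb{T}$, that each $F_i$ is a general member of the linear system $\Lambda_i$, and that $\mathrm{Bs}(\Lambda_i)$ is empty when $i$ is isolated and is described by Lemma~\ref{lemma:base locus graphs} otherwise. First I would apply the smoothness form of Bertini's theorem (\cite[Theorem 8.18]{Hart}) iteratively across $\Lambda_1,\dots,\Lambda_n$: intersecting the smooth torus $\mathbb{T}$ with a general $F_1\in\Lambda_1$ gives a variety smooth off $\mathrm{Bs}(\Lambda_1)$; restricting $\Lambda_2$ to this locus and taking a general member preserves smoothness off $\mathrm{Bs}(\Lambda_1)\cup\mathrm{Bs}(\Lambda_2)$; and so on. This shows that for a generic game $X$ the complete intersection $Y_X\setminus B_X$ is smooth, where $B_X=Y_X\cap\bigcup_i\mathrm{Bs}(\Lambda_i)$.

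Next I would identify $Y_X$ with $\TVc$ away from the special hyperplanes. The factors discarded in passing from the determinant \eqref{eq:det graphs} to $F_i$ vanish only inside $\phi^{-1}(\mathcal{W})$: for an isolated vertex the discarded factor is the monomial $\sigma^{(i)}_1\sigma^{(i)}_2$, which is a unit on $\mathbb{T}$; while the factor \eqref{eq:factor 1} is the partition function of the cliques outside the component $G_i$, so by the product structure of \eqref{eq:para decomposable} its vanishing forces $p_{+\cdots+}=0$, exactly as in the proof of Lemma~\ref{lemma:spohn contained}. Hence on $\mathbb{T}\setminus\phi^{-1}(\mathcal{W})$ the ideals $(F_1,\dots,F_n)$ and $(\det M_1,\dots,\det M_n)$ agree, so there $Y_X$ coincides with $\TVc$. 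Since every $\mathrm{Bs}(\Lambda_i)$ maps into $\mathcal{W}$ under $\phi$ (shown in the proof of Theorem~\ref{theo:conj decom}), we have $B_X\subseteq\phi^{-1}(\mathcal{W})$, and therefore $\TVc$ is smooth away from $\phi^{-1}(\mathcal{W})$.

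Finally I would push smoothness down through $\phi$. The map $\phi$ is monomial, hence on the torus it is a homomorphism of algebraic tori; its image is the dense torus orbit $\Mc^{\circ}$ of $\Mc$, and $\phi\colon\mathbb{T}\to\Mc^{\circ}$ is a smooth surjective morphism whose fibers are the cosets of $\ker\phi$. As $\Vc\setminus\mathcal{W}\subseteq\Mc^{\circ}$, the restriction $\phi^{-1}(\Vc\setminus\mathcal{W})\to\Vc\setminus\mathcal{W}$ is again smooth and surjective, so regularity descends along $\phi$: smoothness of the source $\TVc\setminus\phi^{-1}(\mathcal{W})$ forces smoothness of $\Vc$ away from $\mathcal{W}$. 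For the semialgebraic statement, the totally mixed CI equilibria $\Vc\cap\Delta$ lie in the real positive locus, which avoids $\mathcal{W}$; at such a real point the Jacobian of the defining equations has full rank $n$, so the real implicit function theorem exhibits $\Vc\cap\Delta$ as a smooth semialgebraic manifold.

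The main obstacle will be the iterated Bertini step: one must check that restricting $\Lambda_{k+1}$ to the previously constructed smooth complete intersection still yields a system that is base-point free off $\mathrm{Bs}(\Lambda_{k+1})$, so that a general member cuts it out smoothly and the accumulated base locus never exceeds $\bigcup_i\mathrm{Bs}(\Lambda_i)$; here the explicit generators of Lemma~\ref{lemma:gens lin syst} and the base-locus description of Lemma~\ref{lemma:base locus graphs} are precisely what one needs to keep the restricted systems under control. A secondary point requiring care is the transfer through $\phi$, which relies on $\phi$ being genuinely smooth onto the torus orbit $\Mc^{\circ}$, together with the passage from complex smoothness to the real semialgebraic conclusion via the Jacobian criterion.
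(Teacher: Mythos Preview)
Your proposal is correct and follows essentially the same route as the paper: apply Bertini's theorem to the linear systems $\Lambda_i$ to get smoothness of $Y_X$ off the union of base loci, identify $Y_X$ with $\TVc$ away from $\phi^{-1}(\mathcal{W})$, and then descend smoothness along the smooth monomial map $\phi$ (the paper invokes \cite[\href{https://stacks.math.columbia.edu/tag/02KL}{Tag 02KL}]{Stack} for this last step, where you spell out the torus-homomorphism argument). Your treatment is somewhat more explicit about the iterated Bertini step and the real semialgebraic conclusion, but the architecture is the same.
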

\begin{proof}
Let $\Vc$ be the Spohn CI variety of a generic game $X$ and let $\TVc$ be the preimage of $\Vc$ through the monomial map $\phi$ in \eqref{eq:para decomposable}. By construction, $Y_X$ and $\TVc$ coincide away from the preimage of  the hyperplanes $\{p_{j_1j_2 \cdots j_n}=0\}$ and $\{p_{++ \cdots +} = 0\}$ through $\phi$. Recall that the base locus of the linear systems $\Gamma_i$ is contained in the preimage of these hyperplanes. Hence, by Bertini's Theorem (see \cite[Theorem 8.18]{Hart}) we deduce that $Y_X$ and $\TVc$ are smooth away from the preimage of these hyperplanes. Using \cite[\href{https://stacks.math.columbia.edu/tag/02KL}{Tag 02KL}]{Stack}, we deduce that $\Vc$ is smooth away from the hyperplanes $\{p_{j_1j_2 \cdots j_n}=0\}$ and $\{p_{++ \cdots +} = 0\}$. Finally, these hyperplanes only intersect the probability simplex in its boundary. Hence, we conclude that the set of totally mixed CI equilibria is a smooth semialgebraic manifold.
\end{proof}

\begin{remark}
    Using \cite[Theorem 2.2.9]{mangolte} and Theorem~\ref{theo:conj decom} and Theorem~\ref{theo:smooth}, we have that for generic binary games the set of totally mixed CI equilibria of an undirected graph $G$ is either empty or a smooth manifold whose dimension equals the number of cliques of $G$ with at least two vertices.
\end{remark}

\section{Filtration of Spohn CI varieties}\label{sec: filtration of Spohn CI}

We explore how Spohn CI varieties of different undirected graphs are related.
Let \[ \text{$G = ([n], E(G))$ and $G' = ([n], E(G'))$} \] be two undirected graphs. We say that $G$ is a subgraph of $G'$, denoted by $G\subseteq G'$, if $E(G) \subseteq E(G')$.

\begin{lemma}\label{lemma:inc graphs}
    Let $G\subseteq G'$ and let $\Vc$ and $\Vc'$ be the Spohn CI variety of $G$ and $G'$ respectively. Then, $\Vc$ is a subvariety of $\Vc'$. The analogous inclusion holds for the corresponding sets of totally mixed CI equilibria.
\end{lemma}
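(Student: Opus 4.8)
The plan is to show the inclusion at the level of conditional independence models first, and then descend to the Spohn CI varieties and their totally mixed loci. The key observation is that the global Markov property is monotone in the graph: adding edges only removes separation statements, hence shrinks the set of CI constraints. Concretely, if $C$ separates $A$ and $B$ in $G'$, then every path from $A$ to $B$ in $G$ is also a path in $G'$ (since $E(G)\subseteq E(G')$), so $C$ separates $A$ and $B$ in $G$ as well. Therefore $\mathrm{global}(G')\subseteq\mathrm{global}(G)$, and every joint distribution satisfying the CI statements of $G$ automatically satisfies those of $G'$. Since $\Mc$ is cut out by the quadratic binomials \eqref{eq: independence ideal} associated to its CI statements, having more defining equations for $G$ gives $\mathcal{M}_{\mathrm{global}(G)}\subseteq\mathcal{M}_{\mathrm{global}(G')}$.

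First I would make this separation argument precise, taking care with the passage between separation in $G$ and in $G'$, which is the conceptual heart of the lemma. Once $\Mc\subseteq\Vc'$ for the independence models is established, the inclusion of Spohn CI varieties follows quickly: both $\Vc$ and $\Vc'$ are obtained by intersecting the \emph{same} Spohn variety $\V$ with the respective independence models and then removing the special hyperplanes $\cW$ and taking Zariski closure. Thus
\[
\Vc=\overline{(\V\cap\mathcal{M}_{\mathrm{global}(G)})\setminus\cW}
\subseteq
\overline{(\V\cap\mathcal{M}_{\mathrm{global}(G')})\setminus\cW}
=\Vc',
\]
where the middle inclusion uses $\mathcal{M}_{\mathrm{global}(G)}\subseteq\mathcal{M}_{\mathrm{global}(G')}$ together with monotonicity of intersection, set difference by a fixed $\cW$, and Zariski closure.

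For the final clause about totally mixed CI equilibria, I would intersect both sides with the open simplex $\Delta$. By definition the totally mixed CI equilibria of $G$ are $\Vc\cap\Delta$ and similarly for $G'$, so the set inclusion $\Vc\subseteq\Vc'$ immediately yields $\Vc\cap\Delta\subseteq\Vc'\cap\Delta$. The one point requiring a small remark is that the closure operation and hyperplane removal do not interfere with points in the open simplex: on $\Delta$ the special hyperplanes of $\cW$ do not vanish, so a totally mixed point lies in $\Vc$ if and only if it lies in $\V\cap\mathcal{M}_{\mathrm{global}(G)}$, reducing the claim back to the clean model inclusion already proved.

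The main obstacle is purely the separation bookkeeping in the first paragraph: one must verify that enlarging the edge set cannot create new separators, only destroy them, so that the CI statement set can only grow when passing to the smaller graph. Everything downstream is formal monotonicity of the operations (intersection, fixed-set difference, Zariski closure, and restriction to $\Delta$), so I expect no difficulty beyond stating those steps carefully.
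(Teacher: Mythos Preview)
Your proposal is correct and follows essentially the same approach as the paper's proof: both argue that $\mathrm{global}(G')\subseteq\mathrm{global}(G)$, hence $I_{\mathrm{global}(G')}\subseteq I_{\mathrm{global}(G)}$ and $\mathcal{M}_{\mathrm{global}(G)}\subseteq\mathcal{M}_{\mathrm{global}(G')}$, from which the inclusion of Spohn CI varieties follows. The paper's version is much terser---it asserts the global Markov inclusion without the separation argument and does not spell out the closure/removal monotonicity or the totally mixed case---so your write-up simply supplies the details the paper leaves implicit.
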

 \begin{proof}  
Let $G\subseteq G'$, 
then, we have that  $\mathrm{global}(G')$ is contained in $\mathrm{global}(G)$. This implies that 
$I_{\mathrm{global}(G')}\subseteq I_{\mathrm{global}(G)}$, and hence, $\mathcal{M}_{\mathrm{global}(G)}$ is a subvariety of $\mathcal{M}_{\mathrm{global}(G')}$. In particular, we deduce that the Spohn CI variety, corresponding to $G$, is contained in the Spohn CI variety corresponding to $G'$. 
 \end{proof}

Let $G$ be the complete graph with $n$ vertices. The inclusion of graphs gives to the set of subgraphs of $G$, with $n$ vertices, a structure of poset. By Lemma \ref{lemma:inc graphs}, we get a poset structure on the set of Spohn CI varieties (similarly with totally mixed CI equilibria). In this poset the initial and terminal objects are the set of totally mixed Nash equilibria and the set of totally mixed dependency equilibria. In other words, 
 the set of totally mixed CI equilibria always contains the set of totally mixed Nash equilibria and it is always contained in the set of totally mixed dependency equilibria.

\para 

\begin{example}\label{ex:filtration}
    For $n=3$ we have $8$ subgraphs of the complete graph on $3$ vertices: one with no edges, $3$ with one edge, $3$ with two edges, and the complete graph. The poset structure of the set, formed by these $8$ graphs, is shown in Figure \ref{fig:poset Graph}. 
    In particular, we get a similar picture for the corresponding independence varieties and Spohn CI varieties. A {\tt Macaulay2}  computation shows that the dimension of the independence varieties for a graph with $0$, $1$, $2$ or $3$ edges is $3$, $4$, $5$ and $7$ respectively. Therefore, by Theorem \ref{theo:conj decom},   the dimension of the corresponding Spohn CI varieties are $0$, $1$, $2$, and $4$ respectively. This shows that, in the poset of Spohn CI varieties, there might be dimensional gaps. 
\end{example}

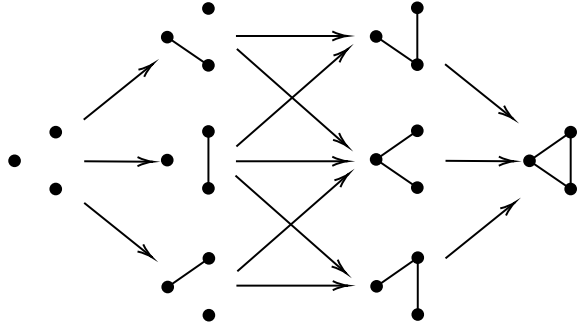
\begin{figure}[h]
    \centering

\tikzset{every picture/.style={line width=0.75pt}}

\begin{tikzpicture}[x=0.75pt,y=0.75pt,yscale=-0.7,xscale=0.7]

\draw  [fill={rgb, 255:red, 0; green, 0; blue, 0 }  ,fill opacity=1 ] (66.67,140.17) .. controls (66.67,138.05) and (68.38,136.33) .. (70.5,136.33) .. controls (72.62,136.33) and (74.33,138.05) .. (74.33,140.17) .. controls (74.33,142.28) and (72.62,144) .. (70.5,144) .. controls (68.38,144) and (66.67,142.28) .. (66.67,140.17) -- cycle ;
\draw  [fill={rgb, 255:red, 0; green, 0; blue, 0 }  ,fill opacity=1 ] (96.33,160.5) .. controls (96.33,158.38) and (98.05,156.67) .. (100.17,156.67) .. controls (102.28,156.67) and (104,158.38) .. (104,160.5) .. controls (104,162.62) and (102.28,164.33) .. (100.17,164.33) .. controls (98.05,164.33) and (96.33,162.62) .. (96.33,160.5) -- cycle ;
\draw  [fill={rgb, 255:red, 0; green, 0; blue, 0 }  ,fill opacity=1 ] (96.33,119.5) .. controls (96.33,117.38) and (98.05,115.67) .. (100.17,115.67) .. controls (102.28,115.67) and (104,117.38) .. (104,119.5) .. controls (104,121.62) and (102.28,123.33) .. (100.17,123.33) .. controls (98.05,123.33) and (96.33,121.62) .. (96.33,119.5) -- cycle ;
\draw  [fill={rgb, 255:red, 0; green, 0; blue, 0 }  ,fill opacity=1 ] (176.67,139.67) .. controls (176.67,137.55) and (178.38,135.83) .. (180.5,135.83) .. controls (182.62,135.83) and (184.33,137.55) .. (184.33,139.67) .. controls (184.33,141.78) and (182.62,143.5) .. (180.5,143.5) .. controls (178.38,143.5) and (176.67,141.78) .. (176.67,139.67) -- cycle ;
\draw  [fill={rgb, 255:red, 0; green, 0; blue, 0 }  ,fill opacity=1 ] (206.33,160) .. controls (206.33,157.88) and (208.05,156.17) .. (210.17,156.17) .. controls (212.28,156.17) and (214,157.88) .. (214,160) .. controls (214,162.12) and (212.28,163.83) .. (210.17,163.83) .. controls (208.05,163.83) and (206.33,162.12) .. (206.33,160) -- cycle ;
\draw  [fill={rgb, 255:red, 0; green, 0; blue, 0 }  ,fill opacity=1 ] (206.33,119) .. controls (206.33,116.88) and (208.05,115.17) .. (210.17,115.17) .. controls (212.28,115.17) and (214,116.88) .. (214,119) .. controls (214,121.12) and (212.28,122.83) .. (210.17,122.83) .. controls (208.05,122.83) and (206.33,121.12) .. (206.33,119) -- cycle ;
\draw  [fill={rgb, 255:red, 0; green, 0; blue, 0 }  ,fill opacity=1 ] (176.67,51) .. controls (176.67,48.88) and (178.38,47.17) .. (180.5,47.17) .. controls (182.62,47.17) and (184.33,48.88) .. (184.33,51) .. controls (184.33,53.12) and (182.62,54.83) .. (180.5,54.83) .. controls (178.38,54.83) and (176.67,53.12) .. (176.67,51) -- cycle ;
\draw  [fill={rgb, 255:red, 0; green, 0; blue, 0 }  ,fill opacity=1 ] (206.33,71.33) .. controls (206.33,69.22) and (208.05,67.5) .. (210.17,67.5) .. controls (212.28,67.5) and (214,69.22) .. (214,71.33) .. controls (214,73.45) and (212.28,75.17) .. (210.17,75.17) .. controls (208.05,75.17) and (206.33,73.45) .. (206.33,71.33) -- cycle ;
\draw  [fill={rgb, 255:red, 0; green, 0; blue, 0 }  ,fill opacity=1 ] (206.33,30.33) .. controls (206.33,28.22) and (208.05,26.5) .. (210.17,26.5) .. controls (212.28,26.5) and (214,28.22) .. (214,30.33) .. controls (214,32.45) and (212.28,34.17) .. (210.17,34.17) .. controls (208.05,34.17) and (206.33,32.45) .. (206.33,30.33) -- cycle ;
\draw  [fill={rgb, 255:red, 0; green, 0; blue, 0 }  ,fill opacity=1 ] (177,231.17) .. controls (177,229.05) and (178.72,227.33) .. (180.83,227.33) .. controls (182.95,227.33) and (184.67,229.05) .. (184.67,231.17) .. controls (184.67,233.28) and (182.95,235) .. (180.83,235) .. controls (178.72,235) and (177,233.28) .. (177,231.17) -- cycle ;
\draw  [fill={rgb, 255:red, 0; green, 0; blue, 0 }  ,fill opacity=1 ] (206.67,251.5) .. controls (206.67,249.38) and (208.38,247.67) .. (210.5,247.67) .. controls (212.62,247.67) and (214.33,249.38) .. (214.33,251.5) .. controls (214.33,253.62) and (212.62,255.33) .. (210.5,255.33) .. controls (208.38,255.33) and (206.67,253.62) .. (206.67,251.5) -- cycle ;
\draw  [fill={rgb, 255:red, 0; green, 0; blue, 0 }  ,fill opacity=1 ] (206.67,210.5) .. controls (206.67,208.38) and (208.38,206.67) .. (210.5,206.67) .. controls (212.62,206.67) and (214.33,208.38) .. (214.33,210.5) .. controls (214.33,212.62) and (212.62,214.33) .. (210.5,214.33) .. controls (208.38,214.33) and (206.67,212.62) .. (206.67,210.5) -- cycle ;
\draw  [fill={rgb, 255:red, 0; green, 0; blue, 0 }  ,fill opacity=1 ] (327.17,139.17) .. controls (327.17,137.05) and (328.88,135.33) .. (331,135.33) .. controls (333.12,135.33) and (334.83,137.05) .. (334.83,139.17) .. controls (334.83,141.28) and (333.12,143) .. (331,143) .. controls (328.88,143) and (327.17,141.28) .. (327.17,139.17) -- cycle ;
\draw  [fill={rgb, 255:red, 0; green, 0; blue, 0 }  ,fill opacity=1 ] (356.83,159.5) .. controls (356.83,157.38) and (358.55,155.67) .. (360.67,155.67) .. controls (362.78,155.67) and (364.5,157.38) .. (364.5,159.5) .. controls (364.5,161.62) and (362.78,163.33) .. (360.67,163.33) .. controls (358.55,163.33) and (356.83,161.62) .. (356.83,159.5) -- cycle ;
\draw  [fill={rgb, 255:red, 0; green, 0; blue, 0 }  ,fill opacity=1 ] (356.83,118.5) .. controls (356.83,116.38) and (358.55,114.67) .. (360.67,114.67) .. controls (362.78,114.67) and (364.5,116.38) .. (364.5,118.5) .. controls (364.5,120.62) and (362.78,122.33) .. (360.67,122.33) .. controls (358.55,122.33) and (356.83,120.62) .. (356.83,118.5) -- cycle ;
\draw  [fill={rgb, 255:red, 0; green, 0; blue, 0 }  ,fill opacity=1 ] (327.17,50.5) .. controls (327.17,48.38) and (328.88,46.67) .. (331,46.67) .. controls (333.12,46.67) and (334.83,48.38) .. (334.83,50.5) .. controls (334.83,52.62) and (333.12,54.33) .. (331,54.33) .. controls (328.88,54.33) and (327.17,52.62) .. (327.17,50.5) -- cycle ;
\draw  [fill={rgb, 255:red, 0; green, 0; blue, 0 }  ,fill opacity=1 ] (356.83,70.83) .. controls (356.83,68.72) and (358.55,67) .. (360.67,67) .. controls (362.78,67) and (364.5,68.72) .. (364.5,70.83) .. controls (364.5,72.95) and (362.78,74.67) .. (360.67,74.67) .. controls (358.55,74.67) and (356.83,72.95) .. (356.83,70.83) -- cycle ;
\draw  [fill={rgb, 255:red, 0; green, 0; blue, 0 }  ,fill opacity=1 ] (356.83,29.83) .. controls (356.83,27.72) and (358.55,26) .. (360.67,26) .. controls (362.78,26) and (364.5,27.72) .. (364.5,29.83) .. controls (364.5,31.95) and (362.78,33.67) .. (360.67,33.67) .. controls (358.55,33.67) and (356.83,31.95) .. (356.83,29.83) -- cycle ;
\draw  [fill={rgb, 255:red, 0; green, 0; blue, 0 }  ,fill opacity=1 ] (327.5,230.67) .. controls (327.5,228.55) and (329.22,226.83) .. (331.33,226.83) .. controls (333.45,226.83) and (335.17,228.55) .. (335.17,230.67) .. controls (335.17,232.78) and (333.45,234.5) .. (331.33,234.5) .. controls (329.22,234.5) and (327.5,232.78) .. (327.5,230.67) -- cycle ;
\draw  [fill={rgb, 255:red, 0; green, 0; blue, 0 }  ,fill opacity=1 ] (357.17,251) .. controls (357.17,248.88) and (358.88,247.17) .. (361,247.17) .. controls (363.12,247.17) and (364.83,248.88) .. (364.83,251) .. controls (364.83,253.12) and (363.12,254.83) .. (361,254.83) .. controls (358.88,254.83) and (357.17,253.12) .. (357.17,251) -- cycle ;
\draw  [fill={rgb, 255:red, 0; green, 0; blue, 0 }  ,fill opacity=1 ] (357.17,210) .. controls (357.17,207.88) and (358.88,206.17) .. (361,206.17) .. controls (363.12,206.17) and (364.83,207.88) .. (364.83,210) .. controls (364.83,212.12) and (363.12,213.83) .. (361,213.83) .. controls (358.88,213.83) and (357.17,212.12) .. (357.17,210) -- cycle ;
\draw  [fill={rgb, 255:red, 0; green, 0; blue, 0 }  ,fill opacity=1 ] (437.67,140.17) .. controls (437.67,138.05) and (439.38,136.33) .. (441.5,136.33) .. controls (443.62,136.33) and (445.33,138.05) .. (445.33,140.17) .. controls (445.33,142.28) and (443.62,144) .. (441.5,144) .. controls (439.38,144) and (437.67,142.28) .. (437.67,140.17) -- cycle ;
\draw  [fill={rgb, 255:red, 0; green, 0; blue, 0 }  ,fill opacity=1 ] (467.33,160.5) .. controls (467.33,158.38) and (469.05,156.67) .. (471.17,156.67) .. controls (473.28,156.67) and (475,158.38) .. (475,160.5) .. controls (475,162.62) and (473.28,164.33) .. (471.17,164.33) .. controls (469.05,164.33) and (467.33,162.62) .. (467.33,160.5) -- cycle ;
\draw  [fill={rgb, 255:red, 0; green, 0; blue, 0 }  ,fill opacity=1 ] (467.33,119.5) .. controls (467.33,117.38) and (469.05,115.67) .. (471.17,115.67) .. controls (473.28,115.67) and (475,117.38) .. (475,119.5) .. controls (475,121.62) and (473.28,123.33) .. (471.17,123.33) .. controls (469.05,123.33) and (467.33,121.62) .. (467.33,119.5) -- cycle ;
\draw    (210.17,119) -- (210.17,160) ;
\draw    (360.67,29.83) -- (360.67,70.83) ;
\draw    (471.17,119.5) -- (471.17,160.5) ;
\draw    (361,210) -- (361,251) ;
\draw    (180.5,51) -- (210.17,71.33) ;
\draw    (331,50.5) -- (360.67,70.83) ;
\draw    (441.5,140.17) -- (471.17,160.5) ;
\draw    (331,139.17) -- (360.67,159.5) ;
\draw    (331.33,230.67) -- (361,210) ;
\draw    (441.5,140.17) -- (471.17,119.5) ;
\draw    (180.83,231.17) -- (210.5,210.5) ;
\draw    (120.33,110.5) -- (168.78,71.42) ;
\draw [shift={(170.33,70.17)}, rotate = 141.11] [color={rgb, 255:red, 0; green, 0; blue, 0 }  ][line width=0.75]    (10.93,-3.29) .. controls (6.95,-1.4) and (3.31,-0.3) .. (0,0) .. controls (3.31,0.3) and (6.95,1.4) .. (10.93,3.29)   ;
\draw    (381,210.5) -- (429.44,171.42) ;
\draw [shift={(431,170.17)}, rotate = 141.11] [color={rgb, 255:red, 0; green, 0; blue, 0 }  ][line width=0.75]    (10.93,-3.29) .. controls (6.95,-1.4) and (3.31,-0.3) .. (0,0) .. controls (3.31,0.3) and (6.95,1.4) .. (10.93,3.29)   ;
\draw    (120.67,140.5) -- (168,140.82) ;
\draw [shift={(170,140.83)}, rotate = 180.39] [color={rgb, 255:red, 0; green, 0; blue, 0 }  ][line width=0.75]    (10.93,-3.29) .. controls (6.95,-1.4) and (3.31,-0.3) .. (0,0) .. controls (3.31,0.3) and (6.95,1.4) .. (10.93,3.29)   ;
\draw    (381,140.17) -- (428.33,140.49) ;
\draw [shift={(430.33,140.5)}, rotate = 180.39] [color={rgb, 255:red, 0; green, 0; blue, 0 }  ][line width=0.75]    (10.93,-3.29) .. controls (6.95,-1.4) and (3.31,-0.3) .. (0,0) .. controls (3.31,0.3) and (6.95,1.4) .. (10.93,3.29)   ;
\draw    (230,140.5) -- (308.33,140.5) ;
\draw [shift={(310.33,140.5)}, rotate = 180] [color={rgb, 255:red, 0; green, 0; blue, 0 }  ][line width=0.75]    (10.93,-3.29) .. controls (6.95,-1.4) and (3.31,-0.3) .. (0,0) .. controls (3.31,0.3) and (6.95,1.4) .. (10.93,3.29)   ;
\draw    (230.33,230.17) -- (308.67,230.17) ;
\draw [shift={(310.67,230.17)}, rotate = 180] [color={rgb, 255:red, 0; green, 0; blue, 0 }  ][line width=0.75]    (10.93,-3.29) .. controls (6.95,-1.4) and (3.31,-0.3) .. (0,0) .. controls (3.31,0.3) and (6.95,1.4) .. (10.93,3.29)   ;
\draw    (230,50.17) -- (308.33,50.17) ;
\draw [shift={(310.33,50.17)}, rotate = 180] [color={rgb, 255:red, 0; green, 0; blue, 0 }  ][line width=0.75]    (10.93,-3.29) .. controls (6.95,-1.4) and (3.31,-0.3) .. (0,0) .. controls (3.31,0.3) and (6.95,1.4) .. (10.93,3.29)   ;
\draw    (230.67,59.5) -- (308.83,128.51) ;
\draw [shift={(310.33,129.83)}, rotate = 221.44] [color={rgb, 255:red, 0; green, 0; blue, 0 }  ][line width=0.75]    (10.93,-3.29) .. controls (6.95,-1.4) and (3.31,-0.3) .. (0,0) .. controls (3.31,0.3) and (6.95,1.4) .. (10.93,3.29)   ;
\draw    (229.67,150.83) -- (307.83,219.84) ;
\draw [shift={(309.33,221.17)}, rotate = 221.44] [color={rgb, 255:red, 0; green, 0; blue, 0 }  ][line width=0.75]    (10.93,-3.29) .. controls (6.95,-1.4) and (3.31,-0.3) .. (0,0) .. controls (3.31,0.3) and (6.95,1.4) .. (10.93,3.29)   ;
\draw    (231,219.83) -- (309.83,150.82) ;
\draw [shift={(311.33,149.5)}, rotate = 138.8] [color={rgb, 255:red, 0; green, 0; blue, 0 }  ][line width=0.75]    (10.93,-3.29) .. controls (6.95,-1.4) and (3.31,-0.3) .. (0,0) .. controls (3.31,0.3) and (6.95,1.4) .. (10.93,3.29)   ;
\draw    (230.33,129.83) -- (309.16,60.82) ;
\draw [shift={(310.67,59.5)}, rotate = 138.8] [color={rgb, 255:red, 0; green, 0; blue, 0 }  ][line width=0.75]    (10.93,-3.29) .. controls (6.95,-1.4) and (3.31,-0.3) .. (0,0) .. controls (3.31,0.3) and (6.95,1.4) .. (10.93,3.29)   ;
\draw    (120.67,170.5) -- (168.11,208.58) ;
\draw [shift={(169.67,209.83)}, rotate = 218.75] [color={rgb, 255:red, 0; green, 0; blue, 0 }  ][line width=0.75]    (10.93,-3.29) .. controls (6.95,-1.4) and (3.31,-0.3) .. (0,0) .. controls (3.31,0.3) and (6.95,1.4) .. (10.93,3.29)   ;
\draw    (380.67,70.5) -- (428.11,108.58) ;
\draw [shift={(429.67,109.83)}, rotate = 218.75] [color={rgb, 255:red, 0; green, 0; blue, 0 }  ][line width=0.75]    (10.93,-3.29) .. controls (6.95,-1.4) and (3.31,-0.3) .. (0,0) .. controls (3.31,0.3) and (6.95,1.4) .. (10.93,3.29)   ;
\draw    (331,139.17) -- (360.67,118.5) ;
\end{tikzpicture}

    \caption{The poset structure of the set of subgraphs of the complete graph on $3$ vertices with respect to the inclusion.}
    \label{fig:poset Graph}
\end{figure}

\begin{example}\label{ex:nash variety}
In the following, we construct a $(2 \times 2 \times 2 \times 2)$-game $X$ to present detailed computations of CI equilibria. The study of the payoff region of this game shows that there are totally mixed CI equilibria that Pareto improves the totally mixed Nash equilibria. We set the payoff tables whose nonzero entries are as follows:
\[\begin{array}{c}
X^{(1)}_{1111} =X^{(2)}_{1112} =
X^{(3)}_{1111} =X^{(4)}_{1211} =1, 
\,\,\,
 X^{(2)}_{1121} =
X^{(4)}_{2111} =-10,\,\,\,X^{(2)}_{2221}  =
X^{(4)}_{2122}  = -16,
\\
X^{(1)}_{2111}  =X^{(2)}_{1212} =
X^{(3)}_{1121}  =X^{(4)}_{1212} =3, 
\,\,\,
X^{(2)}_{1221}  =
X^{(4)}_{2112} =-14,\,\,\,
 X^{(2)}_{2121} =
X^{(4)}_{2121}  =-12,

\\
X^{(1)}_{1211}  =X^{(2)}_{2112} =
X^{(3)}_{1112} =X^{(4)}_{1221} =X^{(1)}_{2122}  =
X^{(3)}_{2221} =2, 
\\
X^{(1)}_{2211} =X^{(2)}_{2212} =
X^{(3)}_{1122}  =X^{(4)}_{1222} = X^{(2)}_{1222} =
X^{(4)}_{2212}  =4.

\end{array}.
\]

Let $G_i$ be an undirected graphical model from Figure~\ref{fig:filtration graph} and $\mathcal{C}_i = \text{global}(G_i)$ for $i \in [4]$. The graphical model $G_4$ is the disjoint union of two cliques and thus the independence model $\mathcal{M}_{\mathcal{C}_4}$ is the Segre variety $\P^3\times\P^3$. The Spohn CI variety $\mathcal{V}_{X, \mathcal{C}_4}$ is a subvariety of $\P^{15}$ lying in the the intersection of $\mathcal{M}_{\mathcal{C}_4}$ and the Spohn variety $\mathcal{V}_X$. 
Let $\sigma_{ij}^{(1)}$ and $\sigma_{ij}^{(2)}$ be the coordinates of the first and second $\P^3$ factor of $\mathcal{M}_{\mathcal{C}_4}$.
As a subvariety of $\mathcal{M}_{\mathcal{C}_4}$, the Spohn CI variety $\mathcal{V}_{X, \mathcal{C}_4}$ is defined by the following four polynomials that are products of a linear form $l_i$ and a quadratic form $q_i$ for $i \in [4]$:
\begin{equation}\label{eq:nash surface}
\begin{array}{cc}
l_1 q_1 : = (\sigma^{(2)}_{11}-2\sigma^{(2)}_{22})(2\sigma^{(1)}_{11}\sigma^{(1)}_{21}+\sigma^{(1)}_{21}\sigma^{(1)}_{12}+3\sigma^{(1)}_{11}\sigma^{(1)}_{22}+2\sigma^{(1)}_{12}\sigma^{(1)}_{22}),
\\
l_2 q_2 := (\sigma^{(2)}_{21}-2\sigma^{(2)}_{12})(2\sigma^{(1)}_{11}\sigma^{(1)}_{12}+\sigma^{(1)}_{21}\sigma^{(1)}_{12}+3\sigma^{(1)}_{11}\sigma^{(1)}_{22}+2\sigma^{(1)}_{21}\sigma^{(1)}_{22}),
\\
l_3 q_3:=(\sigma^{(1)}_{11}-2\sigma^{(1)}_{22})(2\sigma^{(2)}_{11}\sigma^{(2)}_{21}+\sigma^{(2)}_{21}\sigma^{(2)}_{12}+3\sigma^{(2)}_{11}\sigma^{(2)}_{22}+2\sigma^{(2)}_{12}\sigma^{(2)}_{22}),
\\
l_4 q_4:= (\sigma^{(1)}_{21}-2\sigma^{(1)}_{12})(2\sigma^{(2)}_{11}\sigma^{(2)}_{12}+\sigma^{(2)}_{21}\sigma^{(2)}_{12}+3\sigma^{(2)}_{11}\sigma^{(2)}_{22}+2\sigma^{(2)}_{21}\sigma^{(2)}_{22}).
\end{array}
\end{equation}
In Section \ref{sec: Nash CI} we provide general formulas for these equations.
The Spohn CI variety $\mathcal{V}_{X, \mathcal{C}_4}$  is the union of $14$
complete intersection surfaces in $\P^3\times \P^3$. One of them is the zero locus of the ideal  $\langle l_1,l_2,l_3,l_4 \rangle$, which is isomorphic to $\P^1\times\P^1$. Four of these surfaces are the zero locus of ideals of the form $\langle l_{i_1},l_{i_2},l_{i_3},q_{i_4}\rangle$ for $\{i_1,i_2,i_3,i_4\}=[4]$ distinct. Each of these surfaces are isomorphic to the disjoint union of two planes. Similarly, we get six varieties that are the zero locus of ideals of the form $\langle l_{i_1},l_{i_2},q_{i_3},q_{i_4}\rangle$ for $\{i_1,i_2,i_3,i_4\}=[4]$ distinct. The varieties defined by the ideals $\langle l_1,l_2,q_3,q_4\rangle$ and $\langle q_1,q_2,l_3,l_4\rangle$ are empty, whereas the other $4$ ideals define surfaces that are the product of $2$ smooth conics.
We get four surfaces defined by ideals of the form $\langle l_{i_1},q_{i_2},q_{i_3},q_{i_4}\rangle$ for $\{i_1,i_2,i_3,i_4\}=[4]$ distinct. Each of these surfaces are the disjoint union of $4$ quadric surfaces in $\P^3$.
    Finally, the ideal $\langle q_1,q_2,q_3,q_4\rangle$ leads to a surface which is the product of two degree $4$ curves in $\P^3$.
    We conclude that $\mathcal{V}_{X,C_4}$ is the union of $14$ complete intersection surfaces but it has $30$ irreducible components.
\para 

The quadratic surface defined by $q_i$ in the corresponding $\P^3$ does not intersect the open simplex $\Delta:=\Delta_{15}^{\circ}$ for $i \in [4]$. In particular, we deduce that the only component of $\mathcal{V}_{X, C_4}$ intersecting $\Delta$ is $\mathcal{L}:=\mathbb{V}( l_1,l_2,l_3,l_4)$. Therefore, the set of totally mixed CI equilibria of $G_4$ is $\mathcal{L} \cap(\Delta_3^{\circ}\times\Delta_3^{\circ})$ where $\Delta_3^\circ$ is the open simplex in $\P^3$. Equivalently, as a subset of $\P^{15}$, the set of totally mixed  CI equilibria is
    \[
    \left\{p\in\P^{15}:\begin{array}{cc}
    p_{1111}=2p_{1122}=2p_{2211}=4p_{2222},&
    p_{1212}=2p_{1221}=2p_{2112}=4p_{2121}\\
    p_{1112}=2p_{1121}=2p_{2212}=4p_{2221},&
    p_{1211}=2p_{2111}=2p_{1222}=4p_{2122}\\
     p_{2222}p_{2121}=p_{2221}p_{2122},&
    p_{2222},p_{2121},p_{2221},p_{2122}>0
    \end{array}
    \right\}.
    \]
Note that the set of totally mixed CI equilibria is contained in a $3$-dimensional projective space defined by the $12$ linear equations in the previous expression. We identify this projective space with $\P^3$. Let $z_0,z_1,z_2,z_3$ be the coordinates of $\P^3$ corresponding to $p_{2222},p_{2221},p_{2122},p_{2121}$. 
We may view $\mathcal{L}$ as the surface $\bbV(z_0z_3-z_1z_2)\subset\P^3$ and  the set of totally mixed CI equilibria as the intersection of $\bbV(z_0z_3-z_1z_2)$ with the open simplex $\Delta_3^{\circ}$. In Figure~\ref{fig:filtration graph}, we illustrate the poset of subgraphs of $G_4$ and similarly a poset of inclusions of Spohn CI varieties.
The {\color{rgb, 255:red, 95; green, 154; blue, 131}Segre surface} contains two components of {\color{rgb, 255:red, 0; green, 252; blue, 245} Nash CI curves} and in their intersection lies the \textbf{set of totally mixed Nash equilibria}: The only components of the Nash CI curves $\mathcal{V}_{X,\mathcal{C}_2}$ and $\mathcal{V}_{X,\mathcal{C}_3}$ intersecting the open simplex are the line $L_1=\mathbb{V}(z_0-z_1,z_2-z_3)$ and $L_2=\mathbb{V}(z_0-z_2,z_1-z_3)$. The intersection of $L_1$ and $L_2$ is the unique totally mixed Nash equilibria which is the point $p=[1,1,1,1]$.
Note that $\mathcal{L}$ is a ruled surface and through each point $q$ in $\mathcal{L}$ there are exactly two lines contained in $\mathcal{L}$ passing through $q$. In our case, the totally mixed Nash equilibria is the point $p$ in $\mathcal{L}$ and the set of totally mixed CI equilibria of the graphs $G_1$ and $G_2$ correspond to the two lines in $S$ passing through $p$ respectively. This is illustrated in Figure \ref{fig:filtration graph}.

\para

\begin{figure}
    \centering

\tikzset{every picture/.style={line width=0.75pt}}

\begin{tikzpicture}[x=0.75pt,y=0.75pt,yscale=-0.5,xscale=0.5]

\draw  [color={rgb, 255:red, 0; green, 252; blue, 245 }  ,draw opacity=1 ][fill={rgb, 255:red, 0; green, 252; blue, 245 }  ,fill opacity=1 ] (325.01,20.5) .. controls (325.01,17.69) and (327.29,15.41) .. (330.1,15.41) .. controls (332.91,15.41) and (335.19,17.69) .. (335.19,20.5) .. controls (335.19,23.31) and (332.91,25.59) .. (330.1,25.59) .. controls (327.29,25.59) and (325.01,23.31) .. (325.01,20.5) -- cycle ;
\draw  [color={rgb, 255:red, 0; green, 252; blue, 245 }  ,draw opacity=1 ][fill={rgb, 255:red, 0; green, 252; blue, 245 }  ,fill opacity=1 ] (324.76,49.75) .. controls (324.76,46.94) and (327.04,44.66) .. (329.85,44.66) .. controls (332.66,44.66) and (334.94,46.94) .. (334.94,49.75) .. controls (334.94,52.56) and (332.66,54.84) .. (329.85,54.84) .. controls (327.04,54.84) and (324.76,52.56) .. (324.76,49.75) -- cycle ;
\draw  [color={rgb, 255:red, 0; green, 252; blue, 245 }  ,draw opacity=1 ][fill={rgb, 255:red, 0; green, 252; blue, 245 }  ,fill opacity=1 ] (324.76,80.53) .. controls (324.76,77.72) and (327.04,75.44) .. (329.85,75.44) .. controls (332.66,75.44) and (334.94,77.72) .. (334.94,80.53) .. controls (334.94,83.34) and (332.66,85.62) .. (329.85,85.62) .. controls (327.04,85.62) and (324.76,83.34) .. (324.76,80.53) -- cycle ;
\draw  [color={rgb, 255:red, 0; green, 252; blue, 245 }  ,draw opacity=1 ][fill={rgb, 255:red, 0; green, 252; blue, 245 }  ,fill opacity=1 ] (324.76,110.53) .. controls (324.76,107.72) and (327.04,105.44) .. (329.85,105.44) .. controls (332.66,105.44) and (334.94,107.72) .. (334.94,110.53) .. controls (334.94,113.34) and (332.66,115.62) .. (329.85,115.62) .. controls (327.04,115.62) and (324.76,113.34) .. (324.76,110.53) -- cycle ;
\draw [color={rgb, 255:red, 0; green, 252; blue, 245 }  ,draw opacity=1 ][fill={rgb, 255:red, 0; green, 252; blue, 245 }  ,fill opacity=1 ]   (330.1,20.5) -- (329.85,49.75) ;
\draw  [color={rgb, 255:red, 0; green, 252; blue, 245 }  ,draw opacity=1 ][fill={rgb, 255:red, 0; green, 252; blue, 245 }  ,fill opacity=1 ] (324.76,170) .. controls (324.76,167.19) and (327.04,164.91) .. (329.85,164.91) .. controls (332.66,164.91) and (334.94,167.19) .. (334.94,170) .. controls (334.94,172.81) and (332.66,175.09) .. (329.85,175.09) .. controls (327.04,175.09) and (324.76,172.81) .. (324.76,170) -- cycle ;
\draw  [color={rgb, 255:red, 0; green, 252; blue, 245 }  ,draw opacity=1 ][fill={rgb, 255:red, 0; green, 252; blue, 245 }  ,fill opacity=1 ] (324.51,199.25) .. controls (324.51,196.44) and (326.79,194.16) .. (329.6,194.16) .. controls (332.41,194.16) and (334.69,196.44) .. (334.69,199.25) .. controls (334.69,202.06) and (332.41,204.34) .. (329.6,204.34) .. controls (326.79,204.34) and (324.51,202.06) .. (324.51,199.25) -- cycle ;
\draw  [color={rgb, 255:red, 0; green, 252; blue, 245 }  ,draw opacity=1 ][fill={rgb, 255:red, 0; green, 252; blue, 245 }  ,fill opacity=1 ] (324.51,230.03) .. controls (324.51,227.22) and (326.79,224.94) .. (329.6,224.94) .. controls (332.41,224.94) and (334.69,227.22) .. (334.69,230.03) .. controls (334.69,232.84) and (332.41,235.12) .. (329.6,235.12) .. controls (326.79,235.12) and (324.51,232.84) .. (324.51,230.03) -- cycle ;
\draw  [color={rgb, 255:red, 0; green, 252; blue, 245 }  ,draw opacity=1 ][fill={rgb, 255:red, 0; green, 252; blue, 245 }  ,fill opacity=1 ] (324.51,260.03) .. controls (324.51,257.22) and (326.79,254.94) .. (329.6,254.94) .. controls (332.41,254.94) and (334.69,257.22) .. (334.69,260.03) .. controls (334.69,262.84) and (332.41,265.12) .. (329.6,265.12) .. controls (326.79,265.12) and (324.51,262.84) .. (324.51,260.03) -- cycle ;
\draw [color={rgb, 255:red, 0; green, 252; blue, 245 }  ,draw opacity=1 ][fill={rgb, 255:red, 0; green, 252; blue, 245 }  ,fill opacity=1 ]   (329.6,230.03) -- (329.6,260.03) ;
\draw  [color={rgb, 255:red, 95; green, 154; blue, 131 }  ,draw opacity=1 ][fill={rgb, 255:red, 95; green, 154; blue, 131 }  ,fill opacity=1 ] (454.65,96) .. controls (454.65,93.19) and (456.93,90.91) .. (459.74,90.91) .. controls (462.55,90.91) and (464.83,93.19) .. (464.83,96) .. controls (464.83,98.81) and (462.55,101.09) .. (459.74,101.09) .. controls (456.93,101.09) and (454.65,98.81) .. (454.65,96) -- cycle ;
\draw  [color={rgb, 255:red, 95; green, 154; blue, 131 }  ,draw opacity=1 ][fill={rgb, 255:red, 95; green, 154; blue, 131 }  ,fill opacity=1 ] (454.4,125.25) .. controls (454.4,122.44) and (456.68,120.16) .. (459.49,120.16) .. controls (462.3,120.16) and (464.58,122.44) .. (464.58,125.25) .. controls (464.58,128.06) and (462.3,130.34) .. (459.49,130.34) .. controls (456.68,130.34) and (454.4,128.06) .. (454.4,125.25) -- cycle ;
\draw  [color={rgb, 255:red, 95; green, 154; blue, 131 }  ,draw opacity=1 ][fill={rgb, 255:red, 95; green, 154; blue, 131 }  ,fill opacity=1 ] (454.4,156.03) .. controls (454.4,153.22) and (456.68,150.94) .. (459.49,150.94) .. controls (462.3,150.94) and (464.58,153.22) .. (464.58,156.03) .. controls (464.58,158.84) and (462.3,161.12) .. (459.49,161.12) .. controls (456.68,161.12) and (454.4,158.84) .. (454.4,156.03) -- cycle ;
\draw  [color={rgb, 255:red, 95; green, 154; blue, 131 }  ,draw opacity=1 ][fill={rgb, 255:red, 95; green, 154; blue, 131 }  ,fill opacity=1 ] (454.4,186.03) .. controls (454.4,183.22) and (456.68,180.94) .. (459.49,180.94) .. controls (462.3,180.94) and (464.58,183.22) .. (464.58,186.03) .. controls (464.58,188.84) and (462.3,191.12) .. (459.49,191.12) .. controls (456.68,191.12) and (454.4,188.84) .. (454.4,186.03) -- cycle ;
\draw [color={rgb, 255:red, 95; green, 154; blue, 131 }  ,draw opacity=1 ][fill={rgb, 255:red, 95; green, 154; blue, 131 }  ,fill opacity=1 ]   (459.74,96) -- (459.49,125.25) ;
\draw [color={rgb, 255:red, 95; green, 154; blue, 131 }  ,draw opacity=1 ][fill={rgb, 255:red, 95; green, 154; blue, 131 }  ,fill opacity=1 ]   (459.49,156.03) -- (459.49,186.03) ;
\draw  [fill={rgb, 255:red, 0; green, 0; blue, 0 }  ,fill opacity=1 ] (194.65,96.29) .. controls (194.65,93.47) and (196.93,91.2) .. (199.74,91.2) .. controls (202.55,91.2) and (204.83,93.47) .. (204.83,96.29) .. controls (204.83,99.1) and (202.55,101.38) .. (199.74,101.38) .. controls (196.93,101.38) and (194.65,99.1) .. (194.65,96.29) -- cycle ;
\draw  [fill={rgb, 255:red, 0; green, 0; blue, 0 }  ,fill opacity=1 ] (194.4,125.54) .. controls (194.4,122.72) and (196.68,120.45) .. (199.49,120.45) .. controls (202.3,120.45) and (204.58,122.72) .. (204.58,125.54) .. controls (204.58,128.35) and (202.3,130.63) .. (199.49,130.63) .. controls (196.68,130.63) and (194.4,128.35) .. (194.4,125.54) -- cycle ;
\draw  [fill={rgb, 255:red, 0; green, 0; blue, 0 }  ,fill opacity=1 ] (194.4,156.31) .. controls (194.4,153.5) and (196.68,151.23) .. (199.49,151.23) .. controls (202.3,151.23) and (204.58,153.5) .. (204.58,156.31) .. controls (204.58,159.13) and (202.3,161.4) .. (199.49,161.4) .. controls (196.68,161.4) and (194.4,159.13) .. (194.4,156.31) -- cycle ;
\draw  [fill={rgb, 255:red, 0; green, 0; blue, 0 }  ,fill opacity=1 ] (194.4,186.31) .. controls (194.4,183.5) and (196.68,181.23) .. (199.49,181.23) .. controls (202.3,181.23) and (204.58,183.5) .. (204.58,186.31) .. controls (204.58,189.13) and (202.3,191.4) .. (199.49,191.4) .. controls (196.68,191.4) and (194.4,189.13) .. (194.4,186.31) -- cycle ;
\draw    (220.17,150.94) -- (308.35,208.76) ;
\draw [shift={(310.02,209.86)}, rotate = 213.25] [color={rgb, 255:red, 0; green, 0; blue, 0 }  ][line width=0.75]    (10.93,-3.29) .. controls (6.95,-1.4) and (3.31,-0.3) .. (0,0) .. controls (3.31,0.3) and (6.95,1.4) .. (10.93,3.29)   ;
\draw    (220.17,130.34) -- (308.37,71.02) ;
\draw [shift={(310.02,69.91)}, rotate = 146.08] [color={rgb, 255:red, 0; green, 0; blue, 0 }  ][line width=0.75]    (10.93,-3.29) .. controls (6.95,-1.4) and (3.31,-0.3) .. (0,0) .. controls (3.31,0.3) and (6.95,1.4) .. (10.93,3.29)   ;
\draw    (350.83,69.91) -- (439.02,127.73) ;
\draw [shift={(440.69,128.82)}, rotate = 213.25] [color={rgb, 255:red, 0; green, 0; blue, 0 }  ][line width=0.75]    (10.93,-3.29) .. controls (6.95,-1.4) and (3.31,-0.3) .. (0,0) .. controls (3.31,0.3) and (6.95,1.4) .. (10.93,3.29)   ;
\draw    (350.83,209.86) -- (439.03,150.54) ;
\draw [shift={(440.69,149.42)}, rotate = 146.08] [color={rgb, 255:red, 0; green, 0; blue, 0 }  ][line width=0.75]    (10.93,-3.29) .. controls (6.95,-1.4) and (3.31,-0.3) .. (0,0) .. controls (3.31,0.3) and (6.95,1.4) .. (10.93,3.29)   ;

\draw (145.33,126.86) node [anchor=north west][inner sep=0.75pt]   [align=left] {$G_1$};
\draw (480.83,126.86) node [anchor=north west][inner sep=0.75pt]   [align=left] {$G_4$};
\draw (353.33,26.63) node [anchor=north west][inner sep=0.75pt]   [align=left] {$G_2$};
\draw (353.33,230.53) node [anchor=north west][inner sep=0.75pt]   [align=left] {$G_3$};
\end{tikzpicture}
\quad\quad
\includegraphics[scale=0.28]{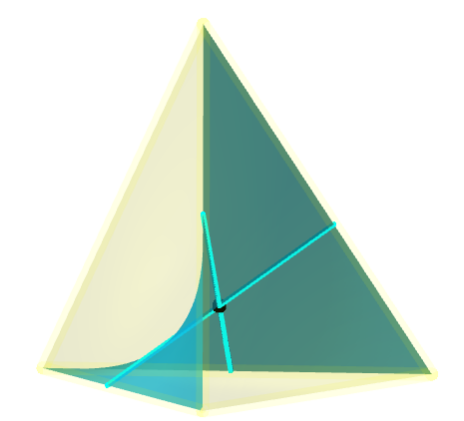}
\quad\quad
\begin{tikzpicture}[x=0.75pt,y=0.75pt,yscale=-0.65,xscale=0.65]

\draw  [color={rgb, 255:red, 95; green, 154; blue, 131 }  ,draw opacity=1 ][fill={rgb, 255:red, 95; green, 154; blue, 131 }  ,fill opacity=1 ] (261,30.2) -- (441.2,30.2) -- (441.2,210.4) -- (261,210.4) -- cycle ;
\draw [color={rgb, 255:red, 0; green, 252; blue, 245 }  ,draw opacity=1 ][line width=1.5]    (441.3,120.3) -- (260.9,120.3) ;
\draw [color={rgb, 255:red, 0; green, 252; blue, 245 }  ,draw opacity=1 ][line width=1.5]    (351.1,30.2) -- (351.1,210.4) ;
\draw  [fill={rgb, 255:red, 0; green, 0; blue, 0 }  ,fill opacity=1 ] (349.1,120.09) .. controls (349.1,118.98) and (350,118.09) .. (351.1,118.09) .. controls (352.2,118.09) and (353.1,118.98) .. (353.1,120.09) .. controls (353.1,121.19) and (352.2,122.09) .. (351.1,122.09) .. controls (350,122.09) and (349.1,121.19) .. (349.1,120.09) -- cycle ;

\draw (245.2,21.7) node [anchor=north west][inner sep=0.75pt]   [align=left] {{\footnotesize $\frac{8}{3}$}};
\draw (244.4,111.8) node [anchor=north west][inner sep=0.75pt]   [align=left] {{\footnotesize  $\frac{4}{3}$}};
\draw (245.2,212.2) node [anchor=north west][inner sep=0.75pt]   [align=left] {{\footnotesize 0}};
\draw (345.6,212.2) node [anchor=north west][inner sep=0.75pt]   [align=left] {{\footnotesize  $\frac{4}{3}$}};
\draw (435.8,212.2) node [anchor=north west][inner sep=0.75pt]   [align=left] {{\footnotesize  $\frac{8}{3}$}};
\end{tikzpicture}

    \caption{Poset of subgraphs of the $4$ vertex graph $G_4$, their CI equilibria and payoff regions.}
    \label{fig:filtration graph}
    
\end{figure}

\para 

Now, we compute the payoff region associated to the Spohn CI surface and the two Nash CI curves. In the coordinates $z_0,z_1,z_2,z_3$, the sum of all the coordinates $p_{i_1i_2i_3i_4}$ equals $\frac{1}{9}(z_0+z_1+z_2+z_3)$. We denote the cone of $\mathcal{L}$ is $\mathbb{A}^4$ by $\tilde{\mathcal{L}}$. Then, we identify the set of totally mixed  CI equilibria $\mathcal{L}\cap\Delta_3^\circ$ with the intersection of $\tilde{\mathcal{L}}$ and the open simplex 
$$\tilde{\Delta}^\circ=\{(z_0,z_1,z_2,z_3)\in \mathbb{A}^4:z_0+z_1+z_2+z_3=\frac{1}{9} \text{ and }z_0,z_1,z_2,z_3> 0\}.$$

In the coordinates $z_0,z_1,z_2,z_3$, the restriction of the payoff map to the set of totally mixed CI equilibria is 
\[
\begin{array}{cccl}
    \pi_X:&\tilde{\Delta} \cap\tilde{\mathcal{L}}&\longrightarrow  &\mathbb{R}^4\\
     & (z_0,z_1,z_2,z_3)& \longmapsto&\left(PX^{(1)} ,PX^{(2)} ,PX^{(3)} ,PX^{(4)}
     \right)
\end{array},
\]
where the expected payoffs are 
$$
\begin{array}{cc}
PX^{(1)}= 24(z_0+z_2),&PX^{(2)} = -24(z_1+z_3) \\
PX^{(3)} = 24(z_0+z_1) , &PX^{(4)} = -24(z_2+z_3)
\end{array}.$$
Note that $PX^{(1)}+PX^{(2)}=PX^{(3)}-PX^{(4)}= \frac{8}{3}$. Therefore, we can consider the payoff map $\pi_X$ as the map from $\tilde{\Delta} \cap\tilde{\mathcal{L}}$ to $\mathbb{R}^2$ sending $(z_0,z_1,z_2,z_3)$ to $(PX^{(1)},PX^{(3)})$.
Restricting the Segre parametrization to $\Delta$, we obtain the following parametrization of $\tilde{\Delta}^\circ \cap\tilde{\mathcal{L}}$:
\[
\begin{array}{cccc}
\varphi:&\mathbb{R}_{>0}^2&\longrightarrow& \tilde{\Delta} \cap\tilde{\mathcal{L}}\\
& (a,b)&\longmapsto &\left( \frac{ab}{9(a+1)(b+1)},\frac{a}{9(a+1)(b+1)},\frac{b}{9(a+1)(b+1)},\frac{1}{9(a+1)(b+1)}, \right)
\end{array}.
\]
In particular, we get that the composition $\pi_X\circ\varphi$ sends a point $(a,b)\in\mathbb{R}^2_{>0}$ to $\frac{8}{3}(\frac{b}{b+1},\frac{a}{a+1})$. Therefore, the CI payoff region equals the open square $(0,\frac{8}{3})\times(0,\frac{8}{3})$ in $\mathbb{R}^2$.
Similarly, for the two Nash CI curves, we get that the payoff regions are the open intervals $\{\frac{4}{3}\}\times(0,\frac{4}{3})$ and $(0,\frac{4}{3})\times \{\frac{4}{3}\}$ respectively as illustrated in Figure \ref{fig:filtration graph}. In these coordinates, the totally mixed Nash point is $(\frac{1}{36},\frac{1}{36},\frac{1}{36},\frac{1}{36})$. The corresponding expected payoff in $\mathbb{R}^2$ is the point $(\frac{4}{3},\frac{4}{3})$.
In particular, we see that in the image of Nash CI curves there are totally mixed CI equilibria that give better expected payoffs than the totally mixed Nash equilibria. For instance, the points $(\frac{3}{72},\frac{3}{72},\frac{1}{72},\frac{1}{72})$
and $(\frac{3}{72},\frac{1}{72},\frac{3}{72},\frac{1}{72})$
lie in the two Nash CI curves respectively and they give better expected payoff than the totally mixed Nash equilibria. Similarly, for any totally mixed Nash CI equilibria on a Nash CI curve, there exists a totally mixed CI equilibria in $\mathcal{L}$ which gives better expected payoffs.
    
\end{example}

\section{Nash conditional independence varieties}\label{sec: Nash CI}

The goal of this section is to analyze the algebro-geometric properties of the Spohn CI variety of undirected graphical models whose connected components are all cliques. Let $(s_1,\ldots,s_k)$ be a partition of the set $[n]$ with $\emptyset \neq s_i\subseteq [n]$ and $|s_i| = n_i$. Given such a partition, we consider the complete graphs $G_1,\ldots,G_k$ on the set of vertices $s_1,\ldots,s_k$ respectively. Note that up to the labeling of the vertices, the integers $n_1,\ldots,n_k$ carry all the information of the partition. Thus, we denote the partition as $\mathbf{n}:= (n_1,\ldots,n_k)$ where $1 \leq n_1 \leq \cdots \leq n_k \leq n$ throughout the section. This modeling can be seen as players forming $k$ groups, where each group's members act dependently within the group but independently from all other players. In Example~\ref{ex:nash variety}, we studied indeed such models for 4-player games. We define the graph $G_\mathbf{n} : = G_1 \sqcup \cdots \sqcup G_k$ and denote the discrete conditional independence model of $G_\mathbf{n}$ by $\Mn$. We first compute the independence model $\Mn$.
In the following proposition, we see how connected components of any undirected graphical model
$G=([n],E)$ get translated to products in the (not necessarily the positive part) discrete conditional independence model.

\begin{proposition}\label{prop:prod graphs}
Let $G =([n],E)$ be an undirected graphical model with $k$ connected components $G_i$. Then
\[
\Mglobal = \mathcal{M}_{\text{global}(G_1)}\times\cdots\times \mathcal{M}_{\text{global}(G_k)}.
\]
\end{proposition}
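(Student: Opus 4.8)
The plan is to view both sides as subvarieties of the single projective space $\mathbb{P}(V)$ and to prove a set-theoretic equality. Write $W_1,\dots,W_k\subseteq[n]$ for the vertex sets of the connected components $G_1,\dots,G_k$. Since the index set $\prod_{i=1}^n[d_i]$ factors blockwise as $\prod_{\ell=1}^k\prod_{v\in W_\ell}[d_v]$, we have $V=V_1\otimes\cdots\otimes V_k$ with $V_\ell=\bigotimes_{v\in W_\ell}\mathbb{K}^{d_v}$, and the right-hand side $\mathcal{M}_{\mathrm{global}(G_1)}\times\cdots\times\mathcal{M}_{\mathrm{global}(G_k)}$ is read as its image $\mathcal{S}$ under the Segre embedding $\prod_\ell\mathbb{P}(V_\ell)\hookrightarrow\mathbb{P}(V)$, i.e.\ the decomposable tensors $P=P^{(1)}\otimes\cdots\otimes P^{(k)}$ with each $[P^{(\ell)}]\in\mathcal{M}_{\mathrm{global}(G_\ell)}$. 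The whole argument rests on one combinatorial remark, which I would record first: since no path of $G$ joins distinct components, for $A,B\subseteq W_\ell$ and any $C\subseteq[n]$ the set $C$ separates $A$ and $B$ in $G$ if and only if $C\cap W_\ell$ separates them in $G_\ell$. I then prove the two inclusions.

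For $\mathcal{S}\subseteq\mathcal{M}_{\mathrm{global}(G)}$, take $P=P^{(1)}\otimes\cdots\otimes P^{(k)}$ with each factor in the corresponding component model, and fix a statement $\mathcal{X}_A\independent\mathcal{X}_B\mid\mathcal{X}_C$ of $\mathrm{global}(G)$. Writing $A_\ell=A\cap W_\ell$ and so on, the separation remark gives $\mathcal{X}_{A_\ell}\independent\mathcal{X}_{B_\ell}\mid\mathcal{X}_{C_\ell}\in\mathrm{global}(G_\ell)$, so for each fixed value $c_\ell$ of $\mathcal{X}_{C_\ell}$ the matrix with entries $p^{(\ell)}_{i_{A_\ell}i_{B_\ell}c_\ell+}$ has rank at most one. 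Because $P$ is decomposable, every marginal $p_{i_Ai_Bi_C+}$ occurring in equation~\eqref{eq: independence ideal} factors as a product over $\ell$ of such within-component marginals, times the total-sum constants of the untouched components. Substituting these factorizations and using the componentwise rank-one structure, the four products making up the CI quadric pair off and the expression collapses to $0$; hence $P\in\mathcal{M}_{\mathrm{global}(G)}$.

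For the reverse inclusion $\mathcal{M}_{\mathrm{global}(G)}\subseteq\mathcal{S}$ I would argue in two steps. First, for each $\ell$ the statement $\mathcal{X}_{W_\ell}\independent\mathcal{X}_{[n]\setminus W_\ell}\mid\mathcal{X}_C$ with $C=\emptyset$ lies in $\mathrm{global}(G)$ (the empty set separates distinct components), and its equations are exactly the vanishing of the $2\times2$ minors of the flattening of $P$ along $W_\ell\mid[n]\setminus W_\ell$; thus every such flattening has rank at most one. An elementary multilinear-algebra lemma — a nonzero tensor all of whose single-block flattenings have rank $\le1$ is decomposable, proved by induction on $k$ — then yields $P=P^{(1)}\otimes\cdots\otimes P^{(k)}$ with each $P^{(\ell)}\neq0$. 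Second, I must show that each $P^{(\ell)}$ satisfies every CI equation of $\mathrm{global}(G_\ell)$.

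This second step is the main obstacle, and the one place where naive marginalization breaks down: pulling a within-component statement $\mathcal{X}_A\independent\mathcal{X}_B\mid\mathcal{X}_C$ back to $P$ and summing out the other components introduces the factor $\bigl(\prod_{m\neq\ell}\sum P^{(m)}\bigr)^2$, which vanishes precisely on the special hyperplane $\{p_{+\cdots+}=0\}$ and then carries no information about $P^{(\ell)}$. The fix is to condition on the other components instead of marginalizing over them: enlarge the conditioning set to $C^+:=C\cup([n]\setminus W_\ell)$. By the separation remark $C^+$ still separates $A$ from $B$ in $G$, so $\mathcal{X}_A\independent\mathcal{X}_B\mid\mathcal{X}_{C^+}\in\mathrm{global}(G)$; evaluating its equation~\eqref{eq: independence ideal} at an index fixing each other component $m$ at a coordinate $\hat\jmath^{(m)}$ with $P^{(m)}_{\hat\jmath^{(m)}}\neq0$ (such a coordinate exists since $P^{(m)}\neq0$) multiplies the within-component CI quadric of $P^{(\ell)}$ by the nonzero constant $\bigl(\prod_{m\neq\ell}P^{(m)}_{\hat\jmath^{(m)}}\bigr)^2$, forcing it to vanish. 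Combining the two inclusions gives the claimed decomposition.
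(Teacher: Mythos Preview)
Your proof is correct and follows essentially the same route as the paper's: both first use the block-independence statement $\mathcal{X}_{W_\ell}\independent\mathcal{X}_{[n]\setminus W_\ell}\mid\emptyset$ to land in the Segre variety, and for the delicate inclusion both enlarge the conditioning set of a within-component statement to $C\cup([n]\setminus W_\ell)$ so that evaluation at a nonzero coordinate of the other factors isolates the component quadric. The only cosmetic differences are that the paper reduces to $k=2$ and, for the easy inclusion, exhibits an explicit ideal-membership identity for the big quadric in terms of the two component quadrics, whereas you verify vanishing pointwise via the componentwise rank-one factorization; both arguments prove the same set-theoretic equality.
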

\begin{proof}
For simplicity, we will assume that $G$ has two connected components, $G_1=([n_1],E_1)$ and $G_2 = ([n_2],E_2)$. Then, we have that $[n_1]\independent [n_2]|\emptyset \in \text{global}(G)$. By (\ref{eq: independence ideal}), we obtain the corresponding independence ideal equals the ideal defining the Segre variety $\P^{d_{i_1} \cdots \ d_{i_{n_1}}-1}\times\P^{d_{j_1} \cdots \ d_{j_{n_2}} -1}$. In particular, $\Mglobal\subseteq \P^{d_{i_1} \cdots \ d_{i_{n_1}}-1}\times\P^{d_{j_1} \cdots \ d_{j_{n_2}} -1}$. Consider the following parametrization of the Segre variety
\begin{equation}\label{eq: segre par in the proof}
p_{i_1\cdots i_{n_1}j_1\cdots j_{n_2}} = 
\sigma^{(1)}_{i_1\cdots i_{n_1}} \sigma^{(2)}_{j_1\cdots j_{n_2}}.
\end{equation}

If $\mathcal{X}_A\independent \mathcal{X}_B \ |\ \mathcal{X}_C \in \text{global}(G_1)$, then $\mathcal{X}_A\independent \mathcal{X}_B \ |\ \mathcal{X}_C \sqcup [n_2] \in \text{global}(G)$. In particular, evaluating the Segre parametrization (\ref{eq: segre par in the proof}) in the independence ideal of the latter CI statement we deduce that $\mathcal{M}_{\text{global}(G)} \subseteq \mathcal{M}_{\text{global}(G_1)}\times \P^{d_{j_1} \cdots \ d_{j_{n_2}} -1}$. Similarly, we get that $\mathcal{M}_{\text{global}(G)}\subseteq \P^{d_{i_1} \cdots \ d_{i_{n_1}}-1}\times \mathcal{M}_{\text{global}(G_2)}$, and hence, $\mathcal{M}_{\text{global}(G)} \subseteq \mathcal{M}_{\text{global}(G_1)}\times\mathcal{M}_{\text{global}(G_2)}$. On the other hand, every CI statement in $\text{global}(G)$ is of the form $\mathcal{X}_{A_1\sqcup B_1}\independent \mathcal{X}_{A_2 \sqcup B_2}|\mathcal{X}_{A_3 \sqcup B_3}$, where $\mathcal{X}_{A_1}\independent \mathcal{X}_{A_2}| \mathcal{X}_{A_3}$ and $\mathcal{X}_{B_1}\independent \mathcal{X}_{B_2}| \mathcal{X}_{B_3}$ are in $\text{global}(G_1)$ and $\text{global}(G_2)$ respectively. By axioms $C1$ and $C2$ and definition of separation on undirected graphical models \cite[page 29]{Lau}, it is enough to consider CI statements where $\bigsqcup_{i \in [3]} A_i \sqcup \bigsqcup_{j \in [3]} B_i = [n]$. By (\ref{eq: independence ideal}), the quadrics generating the corresponding independence ideal of $\mathcal{X}_{A_1\sqcup B_1}\independent \mathcal{X}_{A_2 \sqcup B_2}|\mathcal{X}_{A_3 \sqcup B_3}$ are of the form
\begin{equation}\label{eq:quadric product}
   p_{abc\,\alpha\beta\gamma}\cdot p_{a'b'c\,\alpha'\beta'\gamma}-p_{a'bc\,\alpha'\beta\gamma}\cdot p_{ab'c\,\alpha\beta'\gamma} 
\end{equation}

for $a,a'\in \mathcal{R}_{A_1}$, $b,b'\in \mathcal{R}_{A_2}$, $c\in \mathcal{R}_{A_3}$, $\alpha,\alpha'\in \mathcal{R}_{B_1}$, $\beta,\beta'\in \mathcal{R}_{B_2}$, and $\gamma\in \mathcal{R}_{B_3}$. We claim that such quadric lies inside the ideal of $\mathcal{M}_{\mathcal{C}_1}\times\mathcal{M}_{\mathcal{C}_2}$. Indeed. the quadrics 
\[\begin{array}{c}
q_1 = \sigma^{(1)}_{abc}\sigma^{(1)}_{a'b'c}-\sigma^{(1)}_{a'bc}\sigma^{(1)}_{ab'c},\\
q_2 = \sigma^{(2)}_{\alpha\beta\gamma}\sigma^{(2)}_{\alpha'\beta'\gamma}-\sigma^{(2)}_{\alpha'\beta\gamma}\sigma^{(2)}_{\alpha\beta'\gamma}.
\end{array}
\]
lie in the ideal of $\mathcal{M}_{\mathcal{C}_1}\times\mathcal{M}_{\mathcal{C}_2}$.
Then, the expression 
\[
\sigma^{(2)}_{\alpha\beta\gamma}\sigma^{(2)}_{\alpha'\beta'\gamma}q_1   +\sigma^{(1)}_{a'bc}\sigma^{(1)}_{ab'c}q_2
\]
coincides with the evaluation of the Segre parametrization \eqref{eq: segre par in the proof} in the quadric \eqref{eq:quadric product}. Hence, we conclude that $\Mc =  \mathcal{M}_{\mathcal{C}_1}\times\mathcal{M}_{\mathcal{C}_2}$.
\end{proof}

\para Proposition \ref{prop:prod graphs} allows us to compute the discrete conditional independence model of the graphical model $G_\mathbf{n}$ with binary choices for a partition $\mathbf{n}$ of the set $[n]$, which is the focus of this section. Alternatively, since $G_\mathbf{n}$ is decomposable, one can conclude the following corollary by Proposition~\ref{prop: parametrized model}
and \cite[Theorem 4.2]{GMS06}.

\begin{corollary}\label{co:segre graph}
Let $\mathbf{n}=(n_1,\ldots,n_k)$ be a partition of the set $[n]$, and let  $G_\mathbf{n}$ be a binary undirected graphical model whose connected components are $G_1,\ldots,G_k$, where each $G_i$ is a complete graph on $n_i$ vertices. Then, $\Mn = \P^{2^{n_{1}}-1}\times\cdots\times \P^{2^{n_k}-1}$ is the Segre variety.
\end{corollary}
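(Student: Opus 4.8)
The plan is to combine Proposition~\ref{prop:prod graphs} with a direct analysis of the global Markov property of a single complete graph. First I would apply Proposition~\ref{prop:prod graphs} to $G_\mathbf{n} = G_1 \sqcup \cdots \sqcup G_k$, whose connected components are by construction the complete graphs $G_1, \ldots, G_k$. This at once yields the product decomposition
\[
\Mn = \mathcal{M}_{\text{global}(G_1)} \times \cdots \times \mathcal{M}_{\text{global}(G_k)},
\]
reducing the statement to the identification of each factor $\mathcal{M}_{\text{global}(G_i)}$.

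The crux is that a complete graph has a trivial global Markov property. If $G_i$ is complete on the vertex set $s_i$ with $|s_i| = n_i$, then for any two distinct vertices $a,b \in s_i$ the single edge $(a,b)$ is a path from $a$ to $b$ avoiding every other vertex; hence no subset $C \subseteq s_i \setminus \{a,b\}$ separates $a$ from $b$. Therefore the only triples $(A,B,C)$ for which $C$ separates $A$ and $B$ have $A = \emptyset$ or $B = \emptyset$, and the corresponding CI statements are vacuous, imposing no equations of the form \eqref{eq: independence ideal}. Since each of the $n_i$ variables is binary, the joint distribution on $s_i$ ranges over $[2]^{n_i}$, i.e.\ $2^{n_i}$ states, so $\mathcal{M}_{\text{global}(G_i)} = \P^{2^{n_i}-1}$ is the entire projective space of such distributions.

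Substituting this back gives $\Mn = \P^{2^{n_1}-1} \times \cdots \times \P^{2^{n_k}-1}$. To see that this product is the Segre variety as embedded in $\P^{2^n-1}$, I would invoke the monomial parametrization \eqref{eq:para decomposable} specialized to $G_\mathbf{n}$, which reads $p_{j_1 \cdots j_n} = \prod_{i=1}^{k} \sigma^{(G_i)}_{j_{G_i}}$; this is precisely the Segre embedding of the product of projective spaces.

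I do not expect a genuine obstacle here: the only point requiring care is the verification that separation in a complete graph is impossible for distinct vertices, and everything else is a direct appeal to Proposition~\ref{prop:prod graphs} together with the binary state count. As indicated just after the statement, one may also bypass Proposition~\ref{prop:prod graphs} altogether: since $G_\mathbf{n}$ is decomposable, Proposition~\ref{prop: parametrized model} and \cite[Theorem 4.2]{GMS06} identify $\Mn$ with the parametrized toric model, whose parametrization is exactly the Segre map above.
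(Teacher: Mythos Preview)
Your proof is correct and follows exactly the route the paper indicates: apply Proposition~\ref{prop:prod graphs} to split $\Mn$ into a product over the connected components, then use that a complete graph admits no nontrivial separations so each factor is the full $\P^{2^{n_i}-1}$. The paper states the corollary without further argument (noting also the alternative via decomposability, Proposition~\ref{prop: parametrized model}, and \cite[Theorem~4.2]{GMS06}), and you have simply made explicit the one implicit step.
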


\begin{example}\label{ex:indep 2 2}
    Consider the partition $(\{1,2\},\{3,4\})$ of the set $[4]$, i.e.\ $\mathbf{n} = (2,2)$. The corresponding graph $G_4$ is the graph appearing in Figure \ref{fig:filtration graph} which is the disjoint union of two cliques on 2 vertices. The corresponding independence model is $\mathcal{M}_{\mathbf{n}}= \P^{3}\times\P^3\subset \P^{15}$.
\end{example}

We define the \textit{$\mathbf{n}$--Nash conditional independence (CI) variety}, denoted by $N_{X,\mathbf{n}}$, as the Spohn CI variety of $G_{\mathbf{n}}$. We define the set of \textit{totally mixed $\mathbf{n}$--Nash conditional independence (CI) equilibria} to be  the intersection of $\Nxn\cap \left(\Delta_1\times\cdots\times\Delta_k\right)$, where $\Delta_i$ is the open simplex of the corresponding factor of the Segre variety $\Mn$. In other words, the set of totally mixed $\mathbf{n}$--Nash CI equilibria is the set of totally mixed CI equilibria for the independence model $\Mn$.

\para 

\begin{example}
Certain totally mixed $\n$-Nash CI equilibria are already well-known.
\begin{itemize}
    \item For $\n= (1,\ldots,1 )$, we get that the intersection of $\Nxn$ with the open simplex is the set of totally mixed Nash equilibria. Hence, the set of totally mixed Nash equilibria and the set of totally mixed Nash CI equilibria coincide.
    \item For $\n=(1,\ldots,1,2)$, $\Nxn$ is the Nash CI curve.
    \item For $\n = (n)$, $\Nxn$ is the Spohn variety and  the set of totally mixed Nash CI equilibria is the set of totally mixed dependency equilibria.
\end{itemize}
\end{example}

\para 

As a consequence of Theorem \ref{theo:conj decom} and Corollary~\ref{co:segre graph}, we deduce the following result. 

\para 

\begin{proposition}\label{pro:dim nash}
Let $\n=(n_1,\ldots,n_k)$ a partition of $[n]$. Then, for generic payoff tables, the dimension of $\Nxn$ is 
\[
\dim \Nxn = \dim \Mn - n =  2^{n_1}+\cdots + 2^{n_k}-k-n.
\]
\end{proposition}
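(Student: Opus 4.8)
The plan is to combine the codimension bound from Theorem~\ref{theo:conj decom} with the explicit description of the ambient independence model from Corollary~\ref{co:segre graph}, reducing the statement to a dimension count for a Segre variety. By definition $\Nxn$ is the Spohn CI variety $\Vc$ attached to the decomposable binary graphical model $G_\n = G_1\sqcup\cdots\sqcup G_k$ with $\mathcal{C}=\mathrm{global}(G_\n)$, so both cited results apply verbatim to this situation.

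First I would invoke Theorem~\ref{theo:conj decom}: since $G_\n$ is an undirected binary graphical model and $X$ is generic, the theorem yields that $\Nxn$ has codimension exactly $n$ inside $\Mn$, i.e. $\mathrm{codim}_{\Mn}\Nxn = n$. Next I would compute the dimension of the ambient variety. By Corollary~\ref{co:segre graph}, $\Mn$ is the Segre variety $\P^{2^{n_1}-1}\times\cdots\times\P^{2^{n_k}-1}$, and the dimension of a product of projective spaces is the sum of the dimensions of its factors, so
\[
\dim \Mn = \sum_{i=1}^k \left(2^{n_i}-1\right) = 2^{n_1}+\cdots+2^{n_k}-k.
\]
Subtracting the codimension from the ambient dimension then gives
\[
\dim \Nxn = \dim \Mn - \mathrm{codim}_{\Mn}\Nxn = \left(2^{n_1}+\cdots+2^{n_k}-k\right)-n,
\]
which is exactly the claimed formula.

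There is no substantial obstacle beyond the two quoted results, since all the genuine difficulty lives in Theorem~\ref{theo:conj decom}. The genericity hypothesis on the payoff tables is precisely what upgrades the a priori bound $\mathrm{codim}_{\Mn}\Nxn \le n$ to an equality, and the irreducibility of the Segre variety $\Mn$ makes the passage ``$\dim = \dim\Mn - \mathrm{codim}$'' unambiguous. The only point I would state with a sentence of care is that the codimension must be read inside $\Mn$ as identified in Corollary~\ref{co:segre graph}; this is consistent because both results concern the same model $\mathcal{M}_{\mathrm{global}(G_\n)}$.
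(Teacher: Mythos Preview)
Your proof is correct and matches the paper's approach exactly: the paper states Proposition~\ref{pro:dim nash} as an immediate consequence of Theorem~\ref{theo:conj decom} and Corollary~\ref{co:segre graph}, which is precisely the combination you invoke. Your additional care about reading codimension inside $\Mn$ and the role of genericity is accurate but more than the paper spells out.
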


\para 

Note that Proposition \ref{pro:dim nash} agrees with \cite{IJ}[Proposition 4] for the case of Nash CI curves. Moreover, this is the only case where the $\mathbf{n}$--Nash CI variety is a curve for generic payoff tables. Similarly, the only possible case where 
the $\mathbf{n}$--Nash CI variety is a surface for generic payoff tables whenever $\mathbf{n}=(1,\ldots,1,2,2)$ or $\mathbf{n} = (2,2)$.  
  We say that the Nash CI variety $\Nxn$ is a \textit{Nash CI surface} if it is of dimension $2$ and $\mathbf{n} = (1,\ldots,1,2,2)$ or $\mathbf{n} = (2,2)$. 
\begin{example}
For the partition $(\{1,2\},\{3,4\})$ in Example \ref{ex:indep 2 2}, we have that for generic payoff tables, the Nash CI variety is a Nash CI surface. In Example \ref{ex:nash variety} we illustrated a concrete example of a Nash CI surface associated to the partition $(\{1,2\},\{3,4\})$ and we computed the $(2,2)$--Nash CI equilibria.
  On the other hand, in Example \ref{ex:filtration} for generic $3$--players games, the Spohn CI variety of a graph with three vertices and two edges is a surface. However, since the graph is connected but not complete, it is not a Nash CI surface.     
\end{example}

\subsection{Equations of Nash CI varieties}
In Proposition \ref{pro:dim nash} we saw that a generic Nash CI variety has codimension $n$ in a Segre variety. Now, we improve this result by showing that a generic Nash CI variety is a complete intersection in $\Mn$. This will allow us to compute some properties and invariants Nash CI varieties such as their degree. 
To do so, first, we present the equations defining $\Nxn$ inside $\Mn$. We follow the same strategy as in Section \ref{sec: dimension of Spohn CI}. We evaluate the equations of the Spohn variety at the parametrization of the Segre variety and we remove the factors that lead to components in the hyperplanes we are saturating by. There, we computed the polynomials $F_1,\ldots,F_n$ and we considered the variety $Y_X$ defined by them. The restriction of these polynomials to our particular case provides the equations of $N_X$.\\

Given a partition  $\n=(n_1,\ldots,n_k)$ of $[n]$, we label the $n$ players of the game by $(1,1),\ldots,(1,n_1),\ldots,(k,1),\ldots,(k,n_k)$. We denote the payoff tables of the game by $X^{(1,1)},\cdots, X^{(1,n_1)}, \cdots, X^{(k,1)}, \cdots ,X^{(k,n_k)}$.
We consider the parametrization of $\Mn$ by Proposition~\ref{prop: parametrized model} given by 
\begin{equation}\label{eq: parametrization for Nash CI}
p_{j_{11}\cdots j_{1 n_1} \cdots j_{k1} \cdots j_{kn_k}} := \sigma^{(1)}_{j_{11}\cdots j_{1n_1}}\cdots \sigma_{j_{k1}\cdots j_{kn_k}}^{(k)}
\end{equation}
where $j_{i l_i}\in [2] $ for $i \in [k]$ and $l_i\in[n_i]$.
Evaluating the $2 \times  2$ minors i.e.\ the determinant of $M_{(i,l_i)}$ at this parametrization we obtain

\begin{equation}\label{eq:poly gen det}
F_{(i,l_i)}:=\mathrm{det}
\left(
\begin{array}{cc}
\!\!\!
\displaystyle \sum_{j_{i1}\cdots \widehat{j_{il_i}} \cdots j_{in_i}} \!\!\!\!\!\!\!\!\sigma_{j_{i1}\cdots 1 \cdots j_{in_i}}^{(i)} & \!\!\!\!
\displaystyle\sum_{j_{11}\cdots \widehat{j_{il_i}}\cdots j_{kn_k}} \!\!\!\!\!\!\!\!X^{(i,l_i)}_{j_{11}\cdots 1\cdots j_{kn_k}} \sigma_{j_{11}\cdots j_{1n_1}}^{(1)}\!\!\!\!\cdots\sigma_{j_{i1}\cdots 1\cdots j_{in_i}}^{(i)}\!\!\!\!\cdots \sigma_{j_{k1}\cdots  j_{kn_k}}^{(k)}\!\!\!

\\ 
\!\!\!
\displaystyle \sum_{j_{i1}\cdots \widehat{j_{il_i}} \cdots j_{in_i}} \!\!\!\!\!\!\!\!\sigma_{j_{i1}\cdots 2 \cdots j_{in_i}}^{(i)}&\!\!\!\!
\displaystyle\sum_{j_{11}\cdots \widehat{j_{il_i}}\cdots j_{kn_k}} \!\!\!\!\!\!\!\!X^{(i,l_i)}_{j_{11}\cdots 2\cdots j_{kn_k}} \sigma_{j_{11}\cdots j_{1n_1}}^{(1)}\!\!\!\!\cdots\sigma_{j_{i1}\cdots 2\cdots j_{in_i}}^{(i)}\!\!\!\!\cdots \sigma_{j_{k1}\cdots  j_{kn_k}}^{(k)}\!\!\!

\end{array}
\right)
\end{equation}
with the product of \[\left(\displaystyle \sum_{j_{i1}\cdots \widehat{j_{i l_i }} \cdots j_{i n_i}} \sigma^{(i)}_{j_{i1}}\cdots \widehat{\sigma^{(i)}_{j_{i l_i}}} \cdots \sigma^{(i)}_{j_{i n_i}}\right).
\]

We are interested in the polynomial $F_{(i,l_i)}$, since we saturate the resulting ideal by the hyperplanes $ \{
p_{+\cdots + j_{i l_i} + \cdots +}=0\}$. Note that for $n_i = 1$ i.e.\ the associated clique consists of one vertex, thus we obtain the familiar equation from studying totally Nash equilibria (\cite[Chapter 6]{CBMS}):

\begin{equation}\label{eq:poly n=1}
F_{(i,1)} = 
\displaystyle\sum_{j_{11}\cdots \widehat{j_{i 1}} \cdots j_{kn_k}} \left(X^{(i,1)}_{j_{11}\cdots 2 \cdots j_{kn_k}} -X^{(i,1)}_{j_{11}\cdots 1 \cdots j_{kn_k}}\right)\sigma_{j_{11}\cdots  j_{1n_1}}^{(1)}\cdots\widehat{\sigma_{j_{i 1}}^{(i)}}\cdots \sigma_{j_{k1}\cdots  j_{kn_k}}^{(k)}.
\end{equation}
Then, for generic payoff tables $X^{(1,1)},\ldots,X^{(k,n_k)}$, we deduce that
\begin{equation}\label{eq:nash CI equ}
N_{X,\mathbf{n}} \subseteq \mathbb{V}(F_{(1,1)},\ldots, F_{(k,n_k)} )\subseteq \Mn.
\end{equation}
Note that for $\mathbf{n} = (1,\ldots,1,2)$, the polynomials $F_{(1,1)},\ldots,F_{(kn_k)}$ coincide with the polynomials defining the Nash CI curve computed in \cite{IJ}[Section 2.2]. For instance, the equations shown in Example \ref{ex:nash variety} are obtained from Equation \ref{eq:poly gen det}. 

\para 

Next, we show that for generic payoff tables, the inclusion \eqref{eq:nash CI equ} is an equality. 
Let $D_{(i,j_i)}$ be the divisor in $\Mn$ defined by the polynomial $\mathbb{V}(F_{(i,j_i)})$. The divisor $D_{(i,j_i)}$ lies in the linear system defined by the line bundle 
\begin{equation}\label{eq:line bundle Nash}
\O(1,\ldots,1,\underset{(i)}{(1-\delta_{1,n_i})2},1,\ldots,1))
\end{equation}
of $\Mn$,
where $\delta_{i,j}$ is $1$ if $i=j$, and $0$ if $i\neq j$. 
In other words,
\[
\O(1,\ldots,1,\underset{(i)}{(1-\delta_{1,n_i})2},1,\ldots,1))=\left\{
\begin{array}{cl}
\O(1,\ldots,1,\underset{(i)}{0},1,\ldots,1))   & \text{ if } n_i = 1,\\
\noalign{\vspace*{4mm}}
 \O(1,\ldots,1,\underset{(i)}{2},1,\ldots,1))  & \text{ if } n_i>1.
\end{array}
\right.
\]

Now, we consider the map that sends a payoff table $X^{(i,j_i)}$ to the divisor $D_{(i,j_i)}$. More precisely, for $(i,j_i)$, we consider the map 
\begin{equation}\label{eq:div map}
\begin{array}{cccl}
\phi_{(i,j_i)}:& \mathbb{R}^{2^n}&\longrightarrow& H^0(\Mn,\O(1,\ldots,1,\underset{(i)}{(1-\delta_{1,n_1})2},1,\ldots,1))\\
 & X^{(i,j_i)}&\longmapsto& F_{(i,j_i)}
\end{array}.
\end{equation}

We denote the image of $\phi_{i,j_i}$ by $\Lambda_{(i,j_i)}$. 
In Section \ref{sec: dimension of Spohn CI} we studied these linear systems. In particular, in Lemma \ref{lemma:gens lin syst} the generators of $\Lambda_{(i,j_i)}$ were computed. The next result is the translation of Lemma \ref{lemma:gens lin syst} to the setting of Nash CI varieties.

\para 

\begin{lemma}\label{lemma:gen system Nash CI}
For $n_i=1$, the linear system $\Lambda_{(i,1)}$ is complete. For $n_i\geq 2$, we have that 
\[
\Lambda_{(i,l_i)}\simeq W_{(i,l_i)}\otimes \bigotimes_{j\neq i} H^0(\P^{2^{n_j}-1},\O(1)),
\]
where $ W_{(i,l_i)}$ is the linear system of $\P^{2^{n_i}-1}$ generated by the polynomials
\begin{enumerate}
    \item 
    for $(j_1,\ldots,\widehat{j_{l_i}},\ldots,j_{n_i})\in [2]^{n_i-1}$,
    $\displaystyle\sigma^{(i)}_{j_{1}\cdots 1\cdots j_{n_i}} \left(\sum_{m_1,\ldots,\widehat{m_{l_i}},\ldots,m_{n_i}}
    \sigma^{(i)}_{m_{1}\cdots 2\cdots m_{n_i}}
    \right)$, 
    \item for $(m_1,\ldots,\widehat{m_{l_i}},\ldots,m_{n_i})\in [2]^{n_i-1}$,
    $\displaystyle\sigma^{(i)}_{m_{1}\cdots 2\cdots m_{n_i}} \left(\sum_{j_1,\ldots,\widehat{j_{l_i}},\ldots,j_{n_i}}
    \sigma^{(i)}_{j_{1}\cdots 1\cdots j_{n_i}}
    \right)$, 
    \item $ 
    \displaystyle \sigma^{(i)}_{1\cdots 1\cdots 1}\sigma^{(i)}_{ 1\cdots 2\cdots 1}
    -
\sum_{{\scriptsize 
		\begin{array}{c}
		(j_{1}\cdots \widehat{j_{l_i}}\cdots j_{n_i})\neq (1,\dots,1)\\
		(m_{1}\cdots \widehat{m_{l_i}}\cdots m_{n_i})\neq (1,\dots,1)
		\end{array}}}\!\!\!\!\!\!\!\!\!\!\!\!\!\!
\sigma^{(i)}_{j_{1}\cdots 1\cdots j_{n_i}}
\sigma^{(i)}_{m_{1}\cdots 2\cdots m_{n_i}}.
    $
\end{enumerate}
    
\end{lemma}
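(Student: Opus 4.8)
The plan is to specialize the general analysis carried out in Section~\ref{sec: dimension of Spohn CI} to the graph $G_\mathbf{n}$ and to match notation, so that the statement becomes a direct reading of Lemma~\ref{lemma:gens lin syst}. The structural observation that drives everything is that each connected component $G_i$ of $G_\mathbf{n}$ is a complete graph, hence is itself the unique maximal clique of that component. Consequently $\mathcal{C}(G_\mathbf{n})_{(i,l_i)}=\{G_i\}$ is a singleton, the neighbor set $N_{G_\mathbf{n}}(i,l_i)$ consists of the remaining $n_i-1$ vertices of $G_i$, and thus $c_{(i,l_i)}=n_i-1$. Under this identification the monomial $\mathfrak{S}^{(i,l_i)}_{j,a}$ of Section~\ref{sec: dimension of Spohn CI} reduces to $\sigma^{(i)}_{j(a)}$, where $j(a)\in[2]^{n_i}$ is obtained from $j\in[2]^{n_i-1}$ by placing $a$ in the slot of $(i,l_i)$; in particular the parametrization \eqref{eq:para decomposable} specializes to \eqref{eq: parametrization for Nash CI} and the determinant \eqref{eq:factors of F_i} becomes the $2\times2$ determinant inside \eqref{eq:poly gen det}.

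First I would treat the case $n_i=1$. Here $G_i$ is a single vertex, hence an isolated vertex of $G_\mathbf{n}$, and the discussion preceding Lemma~\ref{lemma:gens lin syst} shows that for an isolated vertex the associated linear system is the full space $V_{(i,1)}$. This is precisely the assertion that $\Lambda_{(i,1)}$ is complete.

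For $n_i\geq 2$ the vertex $(i,l_i)$ is not isolated, so the factorization established just before Lemma~\ref{lemma:gens lin syst} applies: $\Lambda_{(i,l_i)}$ is the tensor product of $W_{(i,l_i)}$ with the complete linear system of multihomogeneous forms of multidegree $\sum_{C\notin\mathcal{C}(G_\mathbf{n})_{(i,l_i)}}e_C$. Since $\mathcal{C}(G_\mathbf{n})=\{G_1,\ldots,G_k\}$ and $\mathcal{C}(G_\mathbf{n})_{(i,l_i)}=\{G_i\}$, this multidegree equals $\sum_{j\neq i}e_{G_j}$, i.e.\ degree one in each remaining Segre factor, whose complete linear system is exactly $\bigotimes_{j\neq i}H^0(\P^{2^{n_j}-1},\O(1))$. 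This produces the claimed decomposition. It then remains only to rewrite the three families of generators of $W_i$ from Lemma~\ref{lemma:gens lin syst} in the coordinates $\sigma^{(i)}_{\cdots}$: substituting $\mathfrak{S}^{(i,l_i)}_{j,a}=\sigma^{(i)}_{j(a)}$ and interpreting $\mathbbm{1}=(1,\ldots,1)\in[2]^{n_i-1}$, families (1), (2) and (3) there become verbatim the three families listed in the present statement.

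The only point requiring care is the index bookkeeping: one must verify that the insertion map $j\mapsto j(a)$ and the singleton identification $\mathcal{C}(G_\mathbf{n})_{(i,l_i)}=\{G_i\}$ make the abstract polynomials $F_i$ of Section~\ref{sec: dimension of Spohn CI} literally coincide with the explicit $F_{(i,l_i)}$ of \eqref{eq:poly gen det}, so that Lemma~\ref{lemma:gens lin syst} may be invoked rather than re-proved. Once this dictionary is fixed there is no further computation, as both the tensor structure and the generators are inherited unchanged from the general case.
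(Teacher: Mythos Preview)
Your proposal is correct and matches the paper's approach exactly: the paper introduces Lemma~\ref{lemma:gen system Nash CI} as ``the translation of Lemma~\ref{lemma:gens lin syst} to the setting of Nash CI varieties'' and gives no separate proof, relying precisely on the dictionary you have spelled out (namely $\mathcal{C}(G_\mathbf{n})_{(i,l_i)}=\{G_i\}$, $c_{(i,l_i)}=n_i-1$, and $\mathfrak{S}^{(i,l_i)}_{j,a}=\sigma^{(i)}_{j(a)}$). Your explicit verification of this dictionary is exactly what is needed to justify the translation.
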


\para 

We deduce that for $n_i>1$, the map $\phi_{i,j_i}$ is not surjective. 

\para 

\begin{corollary}\label{co:system dim}
For $(i,l_i)$ such that $n_i>1$, $\Lambda_{(i,l_i)}$ has dimension $ (2^{n_i}-1)\displaystyle\prod_{j\neq i} 2^{n_j}$.
\end{corollary}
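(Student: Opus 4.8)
The plan is to separate the tensor factors: reduce the whole computation to the dimension of the single-variety linear system $W_{(i,l_i)}$ and then multiply by the dimensions of the complete linear systems in the remaining factors. By Lemma~\ref{lemma:gen system Nash CI} we have
\[
\Lambda_{(i,l_i)}\simeq W_{(i,l_i)}\otimes \bigotimes_{j\neq i} H^0(\P^{2^{n_j}-1},\O(1)),
\]
and since $\dim H^0(\P^{N},\O(1))=N+1$ gives $\dim H^0(\P^{2^{n_j}-1},\O(1))=2^{n_j}$, multiplicativity of dimension under tensor products yields $\dim \Lambda_{(i,l_i)}=\dim W_{(i,l_i)}\cdot\prod_{j\neq i}2^{n_j}$. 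Thus the corollary reduces to the single claim $\dim W_{(i,l_i)}=2^{n_i}-1$.

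To compute $\dim W_{(i,l_i)}$, I would first fix coordinates adapted to the distinguished slot $l_i$. Writing $m:=n_i$ and, for each reduced index $\alpha\in[2]^{m-1}$, setting $u_\alpha:=\sigma^{(i)}_{\alpha,1}$ and $v_\alpha:=\sigma^{(i)}_{\alpha,2}$ for the coordinates carrying a $1$ (resp.\ a $2$) in slot $l_i$, the three families of generators from Lemma~\ref{lemma:gen system Nash CI} become the $2^{m-1}$ quadrics $g_\alpha=u_\alpha\bigl(\sum_\beta v_\beta\bigr)$, the $2^{m-1}$ quadrics $h_\beta=\bigl(\sum_\alpha u_\alpha\bigr)v_\beta$, and the single quadric $r=u_{\mathbf{1}}v_{\mathbf{1}}-\sum_{\alpha,\beta\neq\mathbf{1}}u_\alpha v_\beta$. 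This is $2^m+1$ generators in total, all bilinear in the $u$'s and $v$'s, so I would work in the monomial basis $\{u_\alpha v_\beta\}$ of the space of such bilinear forms.

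Expanding a combination $\sum_\alpha a_\alpha g_\alpha+\sum_\beta b_\beta h_\beta+c\,r$ in this basis, the coefficient of $u_\alpha v_\beta$ is $a_\alpha+b_\beta+c\,\rho(\alpha,\beta)$, where $\rho(\mathbf{1},\mathbf{1})=1$, $\rho(\alpha,\beta)=-1$ for $\alpha,\beta\neq\mathbf{1}$, and $\rho=0$ otherwise. Setting every such coefficient to zero and solving, I expect the relation space to be exactly two-dimensional, parametrized by the two free values $a_{\mathbf{1}}$ and $b_{\mathbf{1}}$: the off-diagonal conditions force $b_\beta=-a_{\mathbf{1}}$ for $\beta\neq\mathbf{1}$ and $a_\alpha=-b_{\mathbf{1}}$ for $\alpha\neq\mathbf{1}$, and both the diagonal case $(\mathbf{1},\mathbf{1})$ and the generic case give the single compatible relation $c=-(a_{\mathbf{1}}+b_{\mathbf{1}})$. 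Consequently $\dim W_{(i,l_i)}=(2^m+1)-2=2^m-1$, which is where the hypothesis $n_i>1$ enters, as it guarantees the existence of reduced indices $\alpha\neq\mathbf{1}$ needed for the case split to be nontrivial.

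The main obstacle is the bookkeeping in this final relation count: one must carefully partition the linear conditions according to whether $\alpha$ and/or $\beta$ equals $\mathbf{1}$ (four cases) and then verify that the equation coming from the diagonal monomial $(\mathbf{1},\mathbf{1})$ and those from the generic monomials $(\alpha\neq\mathbf{1},\beta\neq\mathbf{1})$ are \emph{consistent} rather than over-determining the system. Confirming this consistency is precisely what pins the relation space to dimension exactly two, and hence delivers $\dim W_{(i,l_i)}=2^{n_i}-1$ and the claimed value $\dim\Lambda_{(i,l_i)}=(2^{n_i}-1)\prod_{j\neq i}2^{n_j}$.
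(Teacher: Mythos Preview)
Your argument is correct. The paper states Corollary~\ref{co:system dim} without proof, treating it as immediate from Lemma~\ref{lemma:gen system Nash CI}, so what you have written fills in exactly the details the authors suppressed: the tensor factorization reduces the question to $\dim W_{(i,l_i)}$, and your generator--relation count shows this equals $2^{n_i}-1$. The case analysis on $(\alpha,\beta)$ is handled correctly, and the consistency check you flag as the ``main obstacle'' does go through as you describe.

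One remark: there is a slightly more direct route to $\dim W_{(i,l_i)}=2^{n_i}-1$ that avoids computing relations among the generators. Recall that $W_{(i,l_i)}$ is by definition the image of the linear map $\mathbb{R}^{2^{n_i}}\to H^0(\P^{2^{n_i}-1},\O(2))$ sending $(Y_{j(1)},Y_{j(2)})_{j\in[2]^{n_i-1}}$ to the determinant \eqref{eq:factors of F_i}. Since in the clique case each $\mathfrak{S}^{(i)}_{j,a}$ is simply a coordinate $\sigma^{(i)}_{j_1\cdots a\cdots j_{n_i}}$ of $\P^{2^{n_i}-1}$, these monomials are linearly independent, and the determinant vanishes identically if and only if the second column is a scalar multiple of the first, i.e.\ all $Y_{j(a)}$ equal a common constant $\lambda$. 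Thus the kernel is one-dimensional and $\dim W_{(i,l_i)}=2^{n_i}-1$. Your approach via the explicit generator list has the advantage of being self-contained from the lemma's statement, while this alternative is shorter but requires revisiting the definition of $W_{(i,l_i)}$.
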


Using Lemma \ref{lemma:gen system Nash CI}, we derive the following result.

\para 

\begin{proposition}\label{prop:eq Nash CI}
    For generic payoff tables, $$\Nxn=\bbV(F_{(1,1)},\cdots,F_{(k,n_k)}).$$ In particular, for generic payoff tables, $\Nxn$ is a complete intersection in $\Mn$.
\end{proposition}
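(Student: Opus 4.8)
The plan is to promote the inclusion \eqref{eq:nash CI equ} to an equality by analysing $\mathbb{V}(F_{(1,1)},\ldots,F_{(k,n_k)})$ along the special hyperplanes $\mathcal{W}$, and to extract the complete intersection statement from a codimension count afterwards. The first step is to note that on the open set $\Mn\setminus\mathcal{W}$ the factors discarded in passing from $\det M_{(i,l_i)}$ to $F_{(i,l_i)}$ are invertible, so that $\mathbb{V}(F_{(1,1)},\ldots,F_{(k,n_k)})$ and $\V\cap\Mn$ agree there. Taking closures and recalling the definition of the Spohn CI variety yields
\[
\Nxn=\overline{\mathbb{V}(F_{(1,1)},\ldots,F_{(k,n_k)})\setminus\mathcal{W}}.
\]
Consequently $\Nxn$ is precisely the union of the components of $\mathbb{V}(F_{(1,1)},\ldots,F_{(k,n_k)})$ not contained in $\mathcal{W}$, and the entire proposition reduces to the single claim that \emph{no} irreducible component of $\mathbb{V}(F_{(1,1)},\ldots,F_{(k,n_k)})$ lies inside $\mathcal{W}$.

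I would prove this claim by a dimension estimate. Because $\mathbb{V}(F_{(1,1)},\ldots,F_{(k,n_k)})$ is cut out of $\Mn$ by the $n$ equations $F_{(i,l_i)}$, Krull's height theorem forces every component to have dimension at least $\dim\Mn-n$; it therefore suffices to show that $\dim\bigl(\mathbb{V}(F_{(1,1)},\ldots,F_{(k,n_k)})\cap\mathcal{W}\bigr)<\dim\Mn-n$. Working one stratum at a time, $\mathcal{W}\cap\Mn$ is the finite union of the coordinate hyperplane sections $\{\sigma^{(i)}_a=0\}$, which assemble the hyperplanes $\{p_{j_1\cdots j_n}=0\}$, together with $\{p_{+\cdots+}=0\}$; each such stratum $H$ has dimension $\dim\Mn-1$. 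On $H$ the linear systems $\Lambda_{(i,l_i)}$ restrict, and using the explicit generators provided by Lemma~\ref{lemma:gen system Nash CI} one checks that for generic payoff tables the restricted sections cut the dimension down by one at each step until the equations are exhausted. Bertini's theorem \cite[Theorem 8.18]{Hart} then gives $\dim\bigl(\mathbb{V}(F_{(1,1)},\ldots,F_{(k,n_k)})\cap H\bigr)\le\dim H-n=\dim\Mn-1-n$, which is strictly below $\dim\Mn-n$; taking the union over the finitely many strata $H$ preserves the strict inequality.

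Granting the claim, the equality $\Nxn=\mathbb{V}(F_{(1,1)},\ldots,F_{(k,n_k)})$ is immediate. For the complete intersection assertion, observe that since no component of $\mathbb{V}(F_{(1,1)},\ldots,F_{(k,n_k)})$ lies in $\mathcal{W}$, and the base loci of the $\Lambda_{(i,l_i)}$ are contained in $\mathcal{W}$ by the computation behind Lemma~\ref{lemma:base locus graphs} (as in Theorem~\ref{theo:conj decom}), every component meets the open locus on which the successive Bertini argument applies and hence has codimension exactly $n$. As $\Mn$ is a product of projective spaces, it is smooth and in particular Cohen--Macaulay, so a subscheme of pure codimension $n$ defined by $n$ equations is a complete intersection, which is the claim.

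The hard part will be the transversality bookkeeping in the middle paragraph: one must ensure that on each boundary stratum $H$ the restricted equations $F_{(i,l_i)}|_H$ never all degenerate simultaneously along a subvariety of dimension $\ge\dim\Mn-n$. This does not follow from a naive codimension bound, because the base loci read off from Lemma~\ref{lemma:gen system Nash CI} can be as small as codimension two---coming from the simultaneous vanishing of the two marginal sums $G_1$ and $G_2$ of Lemma~\ref{lemma:base locus graphs}---so it is exactly the explicit list of generators in Lemma~\ref{lemma:gen system Nash CI} that must be used to control the restricted base loci stratum by stratum.
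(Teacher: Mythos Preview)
Your reduction to the claim that no irreducible component of $\mathbb{V}(F_{(1,1)},\ldots,F_{(k,n_k)})$ lies in $\mathcal{W}$ is correct and matches the paper's starting point, and your derivation of the complete intersection statement from that claim via Krull and Cohen--Macaulayness of $\Mn$ is fine.

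The gap is exactly the one you flag yourself in the last paragraph: the iterated Bertini argument on a fixed stratum $H\subset\mathcal{W}$ is not carried out, and it is not automatic. To get $\dim\bigl(\mathbb{V}(F_{(1,1)},\ldots,F_{(k,n_k)})\cap H\bigr)\le\dim H-n$ you would need, at each step of the iteration, that no component of the running intersection on $H$ falls into the base locus of the next $\Lambda_{(i,l_i)}$. Since those base loci (Lemma~\ref{lemma:base locus graphs}) all sit inside $\mathcal{W}$ and $H$ is itself a stratum of $\mathcal{W}$, the restricted base loci on $H$ can be large, and several of the $\Lambda_{(i,l_i)}$ for the same $i$ share common base components in the $i$th factor. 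So ``Bertini on $H$'' does not go through without a case analysis that you have not supplied; as written, the middle paragraph is an outline, not a proof.

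The paper avoids this stratum-by-stratum Bertini bookkeeping by a different mechanism. It sets up the incidence variety $\mathcal{X}=\{(X,p):p\in\mathbb{V}(F_{(1,1)},\ldots,F_{(k,n_k)})\}$ over the space of games and uses upper semicontinuity of fiber dimension to reduce the claim ``for generic $X$, $\mathcal{X}_X$ has no component in $H$'' to the \emph{existence of a single} game $X$ whose fiber $\mathcal{X}_X$ has the expected dimension $\dim\Mn-n$ and no component in $H$. That single game is then produced explicitly: one takes $F_{(i,l)}=q_{(i,l)}\prod_{j\neq i}l_j^{(i,l)}$ with generic linear forms $l_j^{(i,l)}$ and with $q_{(i,l)}$ a specific reducible quadric chosen from the generators in Lemma~\ref{lemma:gen system Nash CI}, namely $q_{(i,l)}=\sigma^{(i)}_{\widetilde m_1\cdots m_l\cdots\widetilde m_{n_i}}\cdot\bigl(\sum_a\sigma^{(i)}_{a_1\cdots\widetilde m_l\cdots a_{n_i}}\bigr)$, where the index $m$ is tailored to the particular hyperplane $H$. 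For this explicit choice one checks directly that every component of $\mathbb{V}(q_{(i,1)},\ldots,q_{(i,n_i)})$ in $\P^{2^{n_i}-1}$ is a linear space of the correct codimension and is not contained in $H$, which is an elementary linear-algebra verification. Thus the paper trades your generic transversality argument on $H$ for an explicit specialization plus semicontinuity; this is what actually closes the gap.
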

\begin{proof}
    We consider the variety 
    \[
    \mathcal{X}:=\{(X,p)\in V^{n}\times \P^{2^{n}-1}:p\in\bbV(F_{(1,1)},\cdots,F_{(k,n_k)})\}
    \]
    together with the projection $\pi:\mathcal{X}\rightarrow V^{n}$. Here, we identify $X\in V^n$ with the game $X=(X^{(1,1)},\ldots,X^{(k,n_k)})$. We denote the fiber of $X$ via $\pi$ by $\mathcal{X}_X$. Note that $\pi$ is surjective and, for any $X$, $\dim \mathcal{X}_X\geq \dim \Mn -n$.
    Let $H$ be a hyperplane of $\P^{2^n-1}$ of the form $\{p_{j_1j_2 \cdots j_n}=0\}$ or $\{p_{++ \cdots +} = 0\}$. We consider the variety
    $$\Sigma_{H}:=\overline{\mathcal{X}\setminus (V^n\times H)}.$$
    For $X\in V^n$, we denote the intersection of $\mathcal{X}_X$ with $\Sigma_{H}$ by $\Sigma_{H,X}$. Note that 
    for $X\in V^n$, $\Sigma_{H,X}$ contains the Nash CI variety $\Nxn$. By Theorem \ref{theo:conj decom},
    the restriction of $\pi$ to $\Sigma_{H}$ is dominant. 
    We want to show that $\mathcal{X}_X$ equals $\Nxn$ for generic $X\in V^n$. This is equivalent to show that for any hyperplane $H$ of the form $\{p_{j_1j_2 \cdots j_n}=0\}$ or $\{p_{++ \cdots +} = 0\}$, we have that  $\Sigma_{H,X} =\mathcal{X}_X$ for generic $X\in V^n$.

    \para 

     If $\mathcal{X}$ has no irreducible component in $V^n\times H$, then $\Sigma_{H}$ is dense in $\mathcal{X}$. Hence, $\Sigma_{H,X} =\mathcal{X}_X$ for generic $X$.
    Assume now that $\mathcal{X}$ has an irreducible component contained in $V^n\times H$. Let $\mathcal{X}_1$ be the union of these irreducible components.
    If the restriction of $\pi$ to $\mathcal{X}_1$ is not dominant, then $\Sigma_{H,X} =\mathcal{X}_X$ for generic $X$. Assume that $\pi|_{\mathcal{X}_1}$ is dominant. 
    Since $\pi|_{\mathcal{X}_1}$ is closed, it is surjective.
    Assume that there exists $X\in V^n$ such that $\mathcal{X}_X$ has dimension $ \dim \Mn -n$ and $\mathcal{X}_X$ has no irreducible component contained in $H$. Then, the intersection of $\mathcal{X}_X$ and $\mathcal{X}_1$ has dimension at most $\dim \Mn -n-1$. Using that the dimension of the fibers of $\pi|_{\mathcal{X}_1}$ is upper semicontinuous, we get that the generic fiber of $\pi|_{\mathcal{X}_1}$ has dimension at most $\dim \Mn -n-1$. This is a contradiction since the dimension of the fibers of $\pi|_{\mathcal{X}_1}$ is at least $\dim \Mn -n$. 

\para 

    Therefore, it is enough to show that  there exists $X\in V^n$ such that $\mathcal{X}_X$ has dimension $ \dim \Mn -n$ and $\mathcal{X}_X$ has no irreducible component contained in $H$. Assume that $H$ is defined by $p_{m_{(1,1)}\cdots m_{(k,n_k)}}=0$ for fixed $m_{(1,1)},\ldots,m_{(k,n_k)}\in[2]$. By Lemma \ref{lemma:gen system Nash CI}, we can choose $X$ such that 
    \[
    F_{(i,l_i)}=q_{(i,l_i)}\displaystyle\prod_{j\neq i}^kl^{(i,l_i)}_j,
    \]
    where $q_{(i,l_i)}$ is an element in the linear system $W_{(i,l_i)}$ and $l_j^{(i,l_i)}$ is a generic element of the complete linear system $H^0(\P^{2^{n_j}-1},\O_{\P^{2^{n_j}-1}}(1))$. Then, the irreducible components of $\mathcal{X}_X$ are defined by $n$ polynomials of the form $q_{(i,l_i)}$ or $l_j^{(i,l_i)}$. Since the linear forms $l_j^{(i,l_i)}$ are generic in a complete linear system, by Bertini's Theorem (see \cite[Theorem 8.18]{Hart}), it is enough to check that the intersection of any number of quadrics of the form $q_{(i,l_i)}$ has the expected dimension and none of its irreducible components is contained in $H$. This can be checked on each of the factors of the Segre variety $\Mn$. In other words, given a player $i$, we need to check that there exist $q_{(i,1)}\in W_{(i,1)},\ldots,q_{(i,n_i)}\in W_{(i,n_i)}$ such that for any subset $S\subseteq [n_i]$, the variety $$\bbV(q_{(i,l)}:l\in S)\subset\P^{2^{n_i}-1}$$ is a complete intersection and it has no irreducible component in the hyperplane $H_i:=\{\sigma^{(i)}_{m_{(i,1)}\cdots m_{(i,n_i)}}=0\}$. For simplicity, we will assume that $S=[n_i]$. The same arguments can be apply to any subset of $[n_i]$

\para 

    For a player $(i,l)$, we fix the index 
    \[
    \widetilde{m}_{(i,l)}=\left\{\begin{array}{cl}
1 & \text{ if } m_{(i,l)}=2\\
2 & \text{ if } m_{(i,l)}=1
\end{array}.
    \right.
    \]
    By Lemma \ref{lemma:gen system Nash CI}, we can set the quadric $q_{(i,l)}$ to be the product
    \begin{equation}\label{eq:some quad}
    \displaystyle
q_{(i,l)}=\sigma^{(i)}_{\widetilde{m}_{(i,1)}\cdots\underset{(l)}{m_{(i,l)}}\cdots \widetilde{m}_{(i,n_i)}}\left(
\sum_{a_1,\ldots,\widehat{a_l},\ldots,a_{n_i}}\sigma^{(i)}_{a_1\cdots\underset{(l)}{\widetilde{m}_{(i,l)}}\cdots a_{n_i}}
\right)
    \end{equation}
    We denote the linear forms in \eqref{eq:some quad} by $\mathfrak{S}_{(i,l)}$ and $g_{(i,l)}$ respectively. Up to labelling of the players, the irreducible components of $\bbV(q_{(i,1)},\ldots,q_{(i,n_i)})$ are linear subspaces of the form $\bbV(\mathfrak{S}_{(i,1)},\ldots,\mathfrak{S}_{(i,j)})\cap\bbV(g_{(i,j+1)},\ldots,g_{(i,n_i)})$ for $j\leq n_i$. First of all, note that $\bbV(\mathfrak{S}_{(i,1)},\ldots,\mathfrak{S}_{(i,j)})$ has the expected dimension since its the zero locus of $j$ distinct monomials. Now, for $l>j$, the monomial $\sigma^{(i)}_{m_{(i,1)}\cdots\widetilde{m}{(i,l)}\cdots m_{(i,n_i)}}$ appears in $g_{(i,l)}$ and it does not appear in any of the other linear forms $g_{(i,j+1)},\ldots,g_{(i,n_i)}$. This implies that $\bbV(g_{(i,j+1)},\ldots,g_{(i,n_i)})$ has also the expected dimension. Moreover, the monomial $\sigma^{(i)}_{m_{(i,1)}\cdots\widetilde{m}{(i,l)}\cdots m_{(i,n_i)}}$ does not appear neither in the linear forms $\mathfrak{S}_{(i,1)},\ldots,\mathfrak{S}_{(i,j)}$. 
    Thus, the intersection $$\bbV(\mathfrak{S}_{(i,1)},\ldots,\mathfrak{S}_{(i,j)})\cap\bbV(g_{(i,j+1)},\ldots,g_{(i,n_i)})$$ has the expected dimension. It remains to show that this intersection is not contained in  $H_i=\{\sigma^{(i)}_{m_{(i,1)}\cdots m_{(i,n_i)}}=0\}$. This follows from the fact that the variable $\sigma^{(i)}_{m_{(i,1)}\cdots m_{(i,n_i)}}$ does not appear in the linear forms $\mathfrak{S}_{(i,1)},\ldots,\mathfrak{S}_{(i,j)},g_{(i,j+1)},\ldots,g_{(i,n_i)}$. We conclude that for $q_{(i,l)}$ as in \eqref{eq:some quad} and for generic linear forms $l_j^{(i,l)}$, $\dim\mathcal{X}_X=\dim \Mn-n$ and $\mathcal{X}_X$ has no irreducible components contained in the hyperplane $H$. Therefore, for generic $X\in V^n$, we have that $\mathcal{X}_X =\Sigma_{H,X}$. 

    \para 
    
    A similar argument shows that the same holds for hyperplanes of the form $\{p_{++ \cdots +} = 0\}$. Since there are only a finite number of hyperplanes of this form, we deduce that for generic $X\in V^n$, $\mathcal{X}_X$ has no irreducible component included in these hyperplanes. We conclude that $\mathcal{X}_X$ equals the Spohn CI variety $\Vc$ for generic payoff tables.
\end{proof}

\subsection{Algebro-geometric properties}\label{sec:smoothness}
By Propositions \ref{prop:eq Nash CI}, $\Nxn$ is the complete intersection of the divisors $D_{(1,1)},\ldots,D_{(k,n_k)}$. Recall that $D_{(i,l)}$ is the divisor defined by $F_{(i,l)}$ and it lies in the linear system given by $\Lambda_{(i,l)}$.
Now we compute the degree of generic Nash CI varieties. 

\para 

\begin{proposition}\label{prop:degree}
For generic payoff tables, the degree of $\Nxn$ is the coefficient of the monomial 
\[
x_1^{2^{n_1}-1}\cdots x_k^{2^{n_k}-1}
\]
in the polynomial 
\begin{equation}\label{eq:poly deg}
\displaystyle
\left( 
\sum_{i=1}^kx_i
\right)^{\sum_i 2^{n_i}-n-k}
\prod_{\beta = 1}^k\left( 
\sum_{i=1}^kx_i +(-1)^{\delta_{1,n_\beta}}x_\beta
\right)^{n_\beta},
\end{equation}
where $ \delta_{i,\beta}=\left\{   \begin{array}{cl}
0 & \text{ if } i\neq \beta\\
1 & \text{ if } i=\beta
\end{array}
\right.$.
\end{proposition}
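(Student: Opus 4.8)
The plan is to exploit the complete intersection structure established in Proposition~\ref{prop:eq Nash CI} and reduce the degree computation to an intersection-theoretic calculation in the Chow ring of the Segre variety $\Mn = \P^{2^{n_1}-1}\times\cdots\times\P^{2^{n_k}-1}$. Recall that
$$
A^\bullet(\Mn)\cong \Z[h_1,\ldots,h_k]\big/(h_1^{2^{n_1}},\ldots,h_k^{2^{n_k}}),
$$
where $h_i$ denotes the pullback of the hyperplane class of the $i$-th factor, so that the class of a reduced point is $h_1^{2^{n_1}-1}\cdots h_k^{2^{n_k}-1}$. Under the Segre embedding $\Mn\hookrightarrow\P^{2^n-1}$ the hyperplane class restricts to $H=h_1+\cdots+h_k$; consequently, for any subvariety $V\subseteq\Mn$ of dimension $d$ one computes $\deg V=\int_{\Mn}[V]\cdot H^{d}$, where the integral reads off the coefficient of the point class.

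The first step is to identify the class of each divisor $D_{(i,l)}$. By \eqref{eq:line bundle Nash}, $D_{(i,l)}$ belongs to the linear system of $\O(1,\ldots,1,\underset{(i)}{(1-\delta_{1,n_i})2},1,\ldots,1)$, so its class is
$$
[D_{(i,l)}]=\sum_{j=1}^{k}h_j+(-1)^{\delta_{1,n_i}}h_i,
$$
the $i$-th entry of the multidegree being $0$ when $n_i=1$ (yielding $\sum_j h_j-h_i$) and $2$ when $n_i>1$ (yielding $\sum_j h_j+h_i$). Since, by Proposition~\ref{prop:eq Nash CI}, $\Nxn$ is the complete intersection of the $n$ divisors $D_{(i,l)}$ for $i\in[k]$ and $l\in[n_i]$, and since for each fixed $i$ the $n_i$ divisors $D_{(i,1)},\ldots,D_{(i,n_i)}$ share the same class, the fundamental class of $\Nxn$ is the product
$$
[\Nxn]=\prod_{i=1}^{k}\Bigl(\sum_{j=1}^{k}h_j+(-1)^{\delta_{1,n_i}}h_i\Bigr)^{n_i}.
$$

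Finally, I would combine the two previous steps with the dimension formula of Proposition~\ref{pro:dim nash}, namely $\dim\Nxn=\sum_i 2^{n_i}-k-n$. Multiplying $[\Nxn]$ by $H^{\dim\Nxn}=\bigl(\sum_i h_i\bigr)^{\sum_i 2^{n_i}-n-k}$ and extracting the coefficient of the point class $h_1^{2^{n_1}-1}\cdots h_k^{2^{n_k}-1}$ gives precisely the coefficient of $x_1^{2^{n_1}-1}\cdots x_k^{2^{n_k}-1}$ in the polynomial \eqref{eq:poly deg}, after renaming $h_i$ to $x_i$. The only point requiring care is that the product of divisor classes genuinely computes the fundamental class of the intersection: this is guaranteed because $\Nxn$ is a complete intersection of the expected codimension $n$, so the successive intersections are proper and no excess-intersection correction arises. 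Thus the substantive input is the complete intersection property from Proposition~\ref{prop:eq Nash CI} together with the standard intersection theory on a product of projective spaces, and the remaining work is essentially bookkeeping of the multidegrees rather than a genuine obstacle.
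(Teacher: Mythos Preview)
Your proposal is correct and follows essentially the same approach as the paper: both arguments work in the Chow ring $\Z[h_1,\ldots,h_k]/(h_1^{2^{n_1}},\ldots,h_k^{2^{n_k}})$ of the Segre variety $\Mn$, identify each divisor class $[D_{(i,l)}]$ via the line bundle \eqref{eq:line bundle Nash}, use the complete intersection property from Proposition~\ref{prop:eq Nash CI} to write $[\Nxn]$ as the product of these classes, and then multiply by the appropriate power of the hyperplane class $H=\sum_i h_i$ to extract the degree as the coefficient of the point class. If anything, your write-up is slightly more explicit about why the product of divisor classes computes the fundamental class (proper intersection, no excess contribution) and about the role of $H^{\dim\Nxn}$ coming from the Segre embedding.
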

\begin{proof}
    We compute the degree of $ \Nxn$ using the Chow ring of $\Mn$, which is given by
    \begin{equation}\label{eq:chow ring}
    \mathcal{A}_{\bullet}(\Mn) \simeq \Z[x_1,\ldots,x_k]/\langle x_1^{2^{n_1}},\ldots, x_k^{2^{n_k}}  \rangle.
    \end{equation}
    The line bundle of the divisor $D_{(\beta,j_\beta)}$ is 
    \[
    \O_{\Mn}(D_{(\beta,j_\beta)}) =
    \O(1,\ldots,1,\underset{(i)}{(1-\delta_{1,n_\beta})2},1,\ldots,1))
    .
    \]
    Thus, we get that the class of $ D_{(\beta,j_\beta)}$ in $\mathcal{A}_{\bullet}(\Mn)$ is given by
    \[
    \left[ D_{(\beta,j_\beta)}\right]= \displaystyle\sum_{i=0}^kx_i +(-1)^{\delta_{1,n_\beta}}x_\beta.
    \]
    Let $H$ be a generic hyperplane. We deduce that the product $[D_{(1,1)}]\cdots [D_{(k,n_n)}][H\cap \Mn]$ is equal to the class of polynomial \eqref{eq:poly deg} in $\mathcal{A}_{\bullet}(\Mn) $. Thus, we conclude that the degree of $\Nxn$ is the coefficient of the monomial $x_1^{2^{n_1}-1}\cdots x_k^{2^{n_k}-1}$ of the polynomial \eqref{eq:poly deg}. We refer to \cite[Chapters 1 and 2]{EH} for more details on
    this computation.
\end{proof}

\para 

In the case of a Nash CI curve, some of the most important invariants are the degree and the genus. In \cite{IJ}[Section 3] these invariants were computed. In particular, Proposition \ref{prop:degree} for Nash CI curves coincides with \cite{IJ}[Lemma 7]. For algebraic surfaces, one important invariant is the Kodaira dimension, which plays a fundamental role in the classification of smooth algebraic surfaces. For instance, a smooth surface $X$ is rational or ruled if and only of its Kodaira dimension is $-1$ (See \cite[Theorem 6.1]{Hart}). Our goal is to compute the Kodaira dimension for smooth Nash CI surfaces. The value of the Kodaira dimension of a surface is one of the integers $-1$, $0$, $1$ or $2$.
 We say that a smooth surface is of general type if its Kodaira dimension equals $2$. We show that smooth Nash CI surface are of general type. To do so, first, we compute the canonical bundle of Nash CI varieties. From Proposition \ref{prop:eq Nash CI}, we deduce that $\Nxn$ is Gorenstein. Thus, we can use the adjunction formula (see \cite[Chapter 1.4.2]{EH}) to compute the canonical bundle of $\Nxn$.

\para 

\begin{lemma}\label{lemma:canonical nash}
For generic payoff tables, we have that 
\[
\omega_{\Nxn} = \iota^{*}\O\left(n+n_1(1-2\delta_{1,n_1})-2^{n_1},\ldots,n+n_k(1-2\delta_{1,n_k})-2^{n_k}\right),
\]
where $\iota$ is the inclusion of $\Nxn$ in $\Mn$.
\end{lemma}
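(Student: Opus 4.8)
The plan is to apply the adjunction formula to the complete intersection structure of $\Nxn$ established in Proposition~\ref{prop:eq Nash CI}. Since $\Nxn = \bbV(F_{(1,1)},\ldots,F_{(k,n_k)})$ is the complete intersection of $n$ divisors in the smooth variety $\Mn$, it is Gorenstein, and its dualizing sheaf is computed by adjunction (see \cite[Chapter 1.4.2]{EH}) as
\[
\omega_{\Nxn} = \iota^*\left( \omega_{\Mn} \otimes \bigotimes_{i=1}^k \bigotimes_{l=1}^{n_i} \O_{\Mn}(D_{(i,l)}) \right),
\]
where $\iota\colon \Nxn \hookrightarrow \Mn$ is the inclusion. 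Thus the whole computation reduces to identifying the multidegree of the line bundle inside the pullback.

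First I would record the canonical bundle of the ambient Segre variety. Since $\Mn = \P^{2^{n_1}-1}\times\cdots\times\P^{2^{n_k}-1}$ and $\omega_{\P^N} = \O_{\P^N}(-N-1)$, one has
\[
\omega_{\Mn} = \O_{\Mn}(-2^{n_1},\ldots,-2^{n_k}).
\]
Next I would sum the classes of the defining divisors coordinate by coordinate. By \eqref{eq:line bundle Nash}, each $D_{(i,l)}$ has multidegree $1$ in every coordinate $\beta\neq i$ and multidegree $(1-\delta_{1,n_i})2$ in coordinate $i$. Fixing a coordinate $\beta$, the divisors with first index $i\neq\beta$ number $n-n_\beta$ and each contribute $1$, while the $n_\beta$ divisors $D_{(\beta,l)}$ each contribute $(1-\delta_{1,n_\beta})2$, so the total degree of $\bigotimes_{(i,l)}\O_{\Mn}(D_{(i,l)})$ in coordinate $\beta$ is
\[
(n-n_\beta) + 2n_\beta(1-\delta_{1,n_\beta}) = n + n_\beta(1-2\delta_{1,n_\beta}).
\]

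Combining this with the contribution $-2^{n_\beta}$ of $\omega_{\Mn}$ gives the $\beta$-th coordinate of $\omega_{\Nxn}$ as $n + n_\beta(1-2\delta_{1,n_\beta}) - 2^{n_\beta}$, which is exactly the claimed formula. The only point requiring care is the bookkeeping in the last display: one must check that a group $\beta$ with $n_\beta=1$ contributes $0$ rather than $2$ in its own coordinate, which is precisely what the factor $\delta_{1,n_\beta}$ encodes, and that the cross-terms coming from the remaining $n-n_\beta$ divisors each contribute degree $1$. There is no genuine geometric obstacle here; once Proposition~\ref{prop:eq Nash CI} supplies the complete intersection (hence Gorenstein) structure, the statement is a direct application of adjunction together with this multidegree count.
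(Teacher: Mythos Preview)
Your proof is correct and follows essentially the same approach as the paper: both invoke the complete intersection structure from Proposition~\ref{prop:eq Nash CI}, apply the adjunction formula, and combine $\omega_{\Mn}=\O(-2^{n_1},\ldots,-2^{n_k})$ with the sum of the multidegrees of the divisors $D_{(i,l)}$. If anything, your coordinate-by-coordinate count $(n-n_\beta)+2n_\beta(1-\delta_{1,n_\beta})=n+n_\beta(1-2\delta_{1,n_\beta})$ is spelled out more explicitly than in the paper.
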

\begin{proof}
Using the adjunction formula we have that 
\[
\begin{array}{c}
\omega_{\Nxn} = \iota^{*}\left(
\omega_{\Mn}\otimes \O\left(2(1-\delta_{1,n_1}),1,\ldots,1\right)
\otimes \cdots\otimes 
\O\left(1,\ldots,1,2(1-\delta_{1,n_k})\right)
\right)= \\
\iota^{*}\left(
\omega_{\Mn}
\otimes \O\left( n+n_1(1-2\delta_{1,n_1}),\ldots, n+n_k(1-2\delta_{1,n_k})\right)
\right).
\end{array}
\]
The result follows from the fact that $\omega_{\Mn}=
\O\left(-2^{n_1},\ldots,-2^{n_k}\right)$.
\end{proof}

\para 

From Lemma \ref{lemma:canonical nash}, we deduce that if $n+n_i(1-2\delta_{1,n_i})-2^{n_i}>0$ for every $i\in[k]$, then $\omega_{\Nxn} $ is ample. Hence, in this case, the Kodaira dimension will be equal to the dimension of $\Nxn$. 

\para 

\begin{corollary}\label{co: general type surface}
    Any smooth Nash CI surface has Kodaira dimension equal to $2$ and is of general type.
\end{corollary}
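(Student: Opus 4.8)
The plan is to deduce the statement entirely from Lemma~\ref{lemma:canonical nash} together with the observation recorded immediately after it: once $\omega_{\Nxn}$ is shown to be ample, the pluricanonical maps of the smooth projective surface $\Nxn$ become embeddings for large multiples, so its Kodaira dimension equals its dimension, namely $2$, which is exactly the condition for $\Nxn$ to be of general type. Thus the whole content reduces to checking that every entry of the multidegree in Lemma~\ref{lemma:canonical nash}, that is, each integer $n+n_i(1-2\delta_{1,n_i})-2^{n_i}$, is strictly positive for the two partitions $\mathbf{n}=(2,2)$ and $\mathbf{n}=(1,\ldots,1,2,2)$ that give rise to a Nash CI surface.

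First I would treat $\mathbf{n}=(2,2)$, where $k=2$ and $n=4$. Here $n_1=n_2=2$, so $\delta_{1,n_i}=0$ and each entry equals $4+2-4=2>0$; hence $\omega_{\Nxn}=\iota^{*}\O(2,2)$, the restriction of an ample line bundle on $\Mn=\P^3\times\P^3$, and is therefore ample on $\Nxn$. Next I would treat $\mathbf{n}=(1,\ldots,1,2,2)$ with $m\geq 1$ copies of $1$, so that $k=m+2$ and $n=m+4$. For an index $i$ with $n_i=1$ one has $\delta_{1,n_i}=1$, and the entry is $n-1-2=m+1>0$; for an index $i$ with $n_i=2$ one has $\delta_{1,n_i}=0$, and the entry is $n+2-4=m+2>0$. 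In both partitions every entry of the multidegree is positive, so $\omega_{\Nxn}$ is the restriction to $\Nxn$ of an ample line bundle on the Segre variety $\Mn$, and is consequently ample.

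Finally, since $\Nxn$ is a smooth projective surface with ample canonical bundle, the standard classification of algebraic surfaces (see \cite[Theorem 6.1]{Hart} and the surrounding discussion) yields that its Kodaira dimension equals $\dim\Nxn=2$, so $\Nxn$ is of general type. I do not expect a genuine obstacle here: the entire substance lies in Lemma~\ref{lemma:canonical nash}, and the remaining work is the elementary positivity check above. The only point worth flagging is conceptual rather than technical, namely that both surface partitions are ``large enough'' relative to the exponents $2^{n_i}$ for the correction terms $n_i(1-2\delta_{1,n_i})$ to keep each component of $n-2^{n_i}+n_i(1-2\delta_{1,n_i})$ strictly positive; it is precisely this numerical coincidence that forces the general-type conclusion.
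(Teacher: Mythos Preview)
Your proof is correct and follows essentially the same route as the paper: reduce the general-type claim to ampleness of $\omega_{\Nxn}$ via Lemma~\ref{lemma:canonical nash}, then check that every entry of the multidegree is strictly positive for the two surface partitions. Your computation for $\mathbf{n}=(2,2)$ yields $\iota^{*}\O(2,2)$, which is the value given by the formula in Lemma~\ref{lemma:canonical nash} (the paper records $\iota^{*}\O(1,1)$ there, but either line bundle is ample and the conclusion is unaffected).
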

\begin{proof}
    In order to prove that a Nash CI surface is of general type, we need to check that the Kodaira dimension equals to $2$. Thus, it is enough to show that the canonical bundle is ample. 
 In the case of a Nash CI surface, we have that  $(n_1,\ldots,n_k) = (1,\ldots,1,2,2)$ or $(n_1, n_2) = (2,2)$ where $n \geq 4$. From Lemma \ref{lemma:canonical nash}, we deduce that 
$$\omega_{\Nxn} =\iota^{*}\O\left(
n-3,\ldots,n-3,n-2,n-2
\right) \text{ or } \omega_{\Nxn} =\iota^{*}\O\left(
1,1
\right),$$
which are ample.
\end{proof}

\para 

Note that Corollary \ref{co: general type surface} refers to smooth Nash CI surfaces.  As
exhibited in Example \ref{ex:nash variety}, there exists Nash CI surfaces that are not smooth nor irreducible.
However, we expect this behavior to be a special case and not a generic situation. 
In \cite{IJ}[Section 4] the smoothness and irreducibility of a generic Nash CI curve is derived.
In the case of surfaces, we conjecture that a generic Nash CI surface is smooth and irreducible.
This question is a more challenging problem than in the curve situation and it remains open. By Theorem \ref{theo:smooth}, we deduce that generic Nash CI surfaces are smooth away from  the hyperplanes $\{p_{j_1j_2 \cdots j_n}=0\}$ and $\{p_{++ \cdots +} = 0\}$. However, this does not give an answer to the conjecture.\\

In the next result, we analyze the connectedness of generic Nash CI varieties.

\para 

\begin{proposition}\label{prop:Nash CI connected}
Let $\mathbf{n}\neq (1,\ldots,1)$. Then, for generic payoff tables, $\Nxn$ is connected.

\end{proposition}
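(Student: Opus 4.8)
The plan is to prove connectedness by showing $H^0(\Nxn,\mathcal{O}_{\Nxn})=\K$, which is equivalent to connectedness for a projective scheme over $\K$. I would first record two inputs. Since $\mathbf{n}\neq(1,\ldots,1)$ there is an index with $n_i\geq 2$, so by Proposition~\ref{pro:dim nash} we have $\dim\Nxn=\sum_i(2^{n_i}-n_i-1)\geq 1$. By Proposition~\ref{prop:eq Nash CI}, for generic payoff tables $\Nxn$ is a complete intersection in $\Mn=\prod_{j=1}^k\P^{2^{n_j}-1}$ cut out by the sections $F_{(i,l)}$ of the line bundles $L_{(i,l)}=\O(1,\ldots,1,(1-\delta_{1,n_i})2,1,\ldots,1)$; in particular these $n$ sections form a regular sequence, so the Koszul complex resolves $\mathcal{O}_{\Nxn}$.

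Next I would feed this resolution into the hypercohomology spectral sequence
\begin{equation*}
E_1^{-p,q}=\bigoplus_{|S|=p}H^q\Bigl(\Mn,\textstyle\bigotimes_{m\in S}L_m^{-1}\Bigr)\ \Longrightarrow\ H^{q-p}(\Nxn,\mathcal{O}_{\Nxn}),
\end{equation*}
where $S$ runs over subsets of the $n$ players. The complex is concentrated in degrees $p\geq 0$, so $E_\infty^{0,0}$ is a subquotient of $E_1^{0,0}=H^0(\Mn,\mathcal{O})=\K$, while the constants give an inclusion $\K\hookrightarrow H^0(\mathcal{O}_{\Nxn})$. Consequently it suffices to prove the diagonal vanishing
\begin{equation*}
H^{|S|}\Bigl(\Mn,\textstyle\bigotimes_{m\in S}L_m^{-1}\Bigr)=0\qquad\text{for every nonempty }S,\tag{$\dagger$}
\end{equation*}
for then $E_\infty^{-p,p}=0$ for all $p\geq 1$, whence $H^0(\mathcal{O}_{\Nxn})=E_\infty^{0,0}$, which is forced to be $\K$ by the presence of the constants.

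The heart of the argument is the verification of $(\dagger)$, where I would spend the real work. Writing $\bigotimes_{m\in S}L_m^{-1}=\O(-b_1,\ldots,-b_k)$ and letting $A_j$ denote the set of players of the $j$-th clique, a direct count of multidegrees gives $b_j=|S|+|S\cap A_j|$ if $n_j\geq 2$ and $b_j=|S|-|S\cap A_j|$ if $n_j=1$. By Künneth together with Bott's formula on each factor $\P^{N_j}$ with $N_j=2^{n_j}-1$, the group $H^p(\Mn,\O(-b_1,\ldots,-b_k))$ can be nonzero only when every $b_j$ lies in $\{0\}\cup[2^{n_j},\infty)$, in which case $p=\sum_{j:\,b_j\geq 2^{n_j}}N_j$. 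Assume such a group were nonzero with $p=|S|$. Every clique of size $\geq 2$ has $b_j\geq|S|\geq 1$, hence $b_j\geq 2^{n_j}$; moreover $|S|\geq 2^{n_{j_0}}-1\geq 3$ for some such clique $j_0$, so each singleton also has $b_j\geq|S|-1\geq 2=2^{n_j}$. Thus all factors contribute, giving $|S|=p=\sum_{n_j\geq 2}(2^{n_j}-1)+\#\{\text{singletons}\}$. On the other hand $|S|\leq\sum_{n_j\geq 2}|S\cap A_j|+\#\{\text{singletons}\}\leq\sum_{n_j\geq 2}n_j+\#\{\text{singletons}\}$, and subtracting yields $\sum_{n_j\geq 2}(2^{n_j}-1-n_j)\leq 0$, which is impossible since each summand is positive and there is at least one clique of size $\geq 2$. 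Hence $(\dagger)$ holds and $\Nxn$ is connected.

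I expect the main obstacle to be precisely that the singleton line bundles $L_{(i,1)}$ are only nef, not ample (they have degree $0$ on their own $\P^1$ factor), so one cannot simply invoke the classical statement that a positive-dimensional complete intersection of ample divisors is connected. This is what forces the detour through the Koszul resolution and the Bott-formula bookkeeping, and the numerical vanishing in $(\dagger)$, which must simultaneously control the mixed contributions of the large cliques and the singletons, is the delicate point.
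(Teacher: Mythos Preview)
Your proof is correct and follows the same strategy as the paper: resolve $\O_{\Nxn}$ by the Koszul complex (available since $\Nxn$ is a complete intersection for generic $X$) and then use K\"unneth/Bott vanishing on the factors of $\Mn$ to force $h^0(\O_{\Nxn})=1$. The paper phrases the diagram chase via short exact sequences rather than a spectral sequence and, instead of your diagonal vanishing $(\dagger)$, proves the stronger blanket vanishing $H^l(\Mn,\F_i)=0$ for all $l\le n$ by noting that for the line bundles appearing in the Koszul complex the only possible nonzero cohomology sits in degree $\dim\Mn=\sum_i(2^{n_i}-1)\ge n+1$; your more detailed numerical argument is a careful way of reaching the same conclusion.
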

\begin{proof}
By Proposition \ref{prop:eq Nash CI}, we get the following exact sequence (Koszul complex):

\begin{equation}
\displaystyle
0\longrightarrow \F_n\overset{\phi_n}{\longrightarrow} \F_{n-1}\overset{\phi_{n-1}}{\longrightarrow}\cdots\overset{\phi_3}{\longrightarrow}
\F_{2}
\overset{\phi_2}{\longrightarrow}
\F_1
\overset{\phi_1}{\longrightarrow}
\O_{\Mn}\overset{\phi_0}{\longrightarrow}
\mathcal{O}_{\Nxn}\longrightarrow 0,
\end{equation}
where 
\[
\mathcal{F}_l:= \displaystyle\bigoplus_{(i_1,j_1)<\cdots<(i_{l},j_l)}^n\mathcal{O}_{\Mn}\left( -\overset{k}{\underset{l=1}{\sum}} D_{(i_l,j_l)} \right).
\]
Let $K_i$ be the kernel of $\phi_i$. Then, the above exact sequence split into the following $n$ short exact sequences:
\[
\begin{array}{ll}
 (E_0):    &  0\longrightarrow K_0\longrightarrow \O_{\Mn}\longrightarrow\O_{\Nxn}\longrightarrow 0\\
 \noalign{\vspace*{1mm}}(E_1):    &  0\longrightarrow K_1\longrightarrow \F_{1}\longrightarrow K_0\longrightarrow 0\\
 \,\,\,\,\vdots & \hspace*{2.7cm}\vdots \\
 (E_i):    &  0\longrightarrow K_i\longrightarrow \F_{i}\longrightarrow K_{i-1}\longrightarrow 0\\
  \,\,\,\,\vdots & \hspace*{2.7cm}\vdots \\
 (E_{n-1}):    &  0\longrightarrow K_{n-1}\longrightarrow \F_{n-1}\longrightarrow K_{n-2}\longrightarrow 0.\\
\end{array}
\]
In order to check $\Nxn$ is connected, we show that $h^0(\Nxn,\O_{\Nxn})=1$. From the long exact sequence of $(E_0)$, we get that
\[
H^{0}(\Mn,\O_{\Mn}) \rightarrow H^{0}(\Nxn,\O_{\Nxn})\rightarrow H^1(\Mn,K_0).
\]
Note that if $h^1(\Mn,K_0)=0$, we get a surjection 
$H^{0}(\Mn,\O_{\Mn}) \twoheadrightarrow H^{0}(\Nxn,\O_{\Nxn})$. Since $\Mn$ is connected, this would imply that $h^0(\Nxn,\O_{\Nxn})=1$. Hence, it is enough to check that $h^1(\Mn,K_0)=0$. To do so, for every $i\in[n-1]$, we consider the following exact sequence arising from $(E_i)$:
\begin{equation}\label{eq:seq connec}
H^{i}(\Mn,\mathcal{F}_{i}) \rightarrow H^{i}(\Mn,K_{i-1})\rightarrow H^{i+1}(\Mn,K_{i})\rightarrow H^{i+1}(\Mn,\mathcal{F}_{i}).
\end{equation}
We claim that $H^{l}(\Mn,\mathcal{F}_{i})= 0$ for $l\leq n$.  Indeed, each $\mathcal{F}_i$ is a direct sum of sheaves of $\O(-d_1,\ldots,-d_k)$ for some $d_i\geq 0$. Hence, it is enough to check that the corresponding cohomology groups of these sheaves vanish. Using K\"unneth formula, we obtain that 
\[
H^\alpha(\Mn,\O(-d_1,\ldots,-d_k)) = 0 
\]
for $\alpha\neq \sum_i 2^{n_i} - k$ or if some $d_i<2^{n_i}$.
Since $\mathbf{n}\neq (1,\ldots,1)$, $\sum_i 2^{n_i} - k\geq n+1$ and the equality only holds for $\mathbf{n}=(1,\ldots,1,2)$. Thus, we conclude that $H^{l}(\Mn,\mathcal{F}_{i})= 0$ for $l\leq n$. As a result, from equation \eqref{eq:seq connec}, we deduce that $H^{i}(\Mn,K_{i-1})\simeq H^{i+1}(\Mn,K_{i})$ for every $i\in[n-1]$. In particular, we get that 
\[
H^1(\Mn,K_0)\simeq H^{n}(\Mn,K_{n-1})= H^n(\Mn,\mathcal{F}_n) = 0.\qedhere
\] 
\end{proof}

\section{Universality of Nash CI varieties}\label{sec: affine universality}

In this section we study the affine universality of $\mathbf{n}$--Nash CI varieties whose partition $\mathbf{n}$ is of the form $(n_1,\ldots,n_k)=(1,\ldots,1,2,\ldots,2)$. In other words, the independence model is
\[
\Mn = \left(\P^1 \right)^{n-2l}\times \left(\P^3\right)^l.
\]
for $l\in[n]$.
In this situation, we denote the corresponding $\mathbf{n}$--Nash CI variety by $Y_{X,l}$. For instance, for generic payoff tables,  $Y_{X,0}$ corresponds to the system of multilinear equations determining the set of totally mixed Nash equilibria, $Y_{X,1}$ is the Nash CI curve, and $Y_{X,2}$ is the Nash CI surface. 
Let $U_{X,l}\subset Y_{X,l}$ be the affine open subset defined by  $$\sigma^{(1)}_2,\ldots,\sigma_2^{(n-2l)},\sigma_{22}^{(n-2l+1)},\ldots,\sigma_{22}^{(n-l)}\neq 0.$$ In this setting, the affine universality ask whether any real affine algebraic variety is isomorphic to an affine open subset of $\Nxn$ for some game. In \cite{datta}, Datta proved the affine universality for the set of totally mixed Nash equilibria, i.e. for $l=0$. In \cite{IJ}[Section 4.2], the affine universality was studied for the case of Nash CI curves.\\

In \cite{datta}, the concept of isomorphism employed for investigating the universality of Nash equilibria is specifically the notion of stable isomorphism within the category of semialgebraic sets. On the other hand, in \cite{IJ}, the notion of isomorphism used is the notion of isomorphism of algebraic varieties. Following the second approach, in this paper, we use the second notion of isomorphism. 
 As in \cite{IJ}, we rephrase \cite[Theorem 1,Theorem 6]{datta} as follows using the notion of isomorphism of algebraic varieties.

\para 

\begin{theorem}\label{theo:datta}
Let $S\subset \mathbb{R}^m$ be a real affine algebraic variety. Then, there exists an $n$-player game $X$ with binary choices such that $U_{X,0}\simeq S$.
\end{theorem}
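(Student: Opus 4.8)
The plan is to make Datta's construction explicit in the present coordinates and then reduce the statement to a purely combinatorial normal-form problem for multilinear systems. First I would identify $U_{X,0}$ concretely. For $\mathbf{n}=(1,\dots,1)$ Corollary~\ref{co:segre graph} gives $\Mn=(\P^1)^n$, and specializing \eqref{eq:poly n=1} to this case shows that $Y_{X,0}=\bbV(F_{(1,1)},\dots,F_{(n,1)})$, where each $F_{(i,1)}$ is multilinear in the homogeneous coordinates $\sigma^{(m)}=(\sigma_1^{(m)},\sigma_2^{(m)})$ for $m\neq i$ and does not involve $\sigma^{(i)}$. Passing to the affine chart $U_{X,0}$, i.e.\ normalizing $\sigma_2^{(m)}=1$ and writing $x_m:=\sigma_1^{(m)}$, the variety $U_{X,0}$ becomes $\bbV(g_1,\dots,g_n)\subseteq\mathbb{A}^n$, where $g_i$ is a multilinear polynomial in the variables $\{x_m:m\neq i\}$. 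The crucial observation is that the $2^{n-1}$ coefficients of $g_i$ are exactly the payoff differences $X^{(i,1)}_{\cdots 2\cdots}-X^{(i,1)}_{\cdots 1\cdots}$, hence are completely free; so $g_i$ may be prescribed to be an \emph{arbitrary} multilinear polynomial that is independent of $x_i$.

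This reduces the theorem to the following combinatorial statement, which is the content of \cite{datta} transported into the algebraic category: every real affine variety $S\subseteq\mathbb{R}^m$ is isomorphic to the zero locus in some $\mathbb{A}^n$ of $n$ multilinear polynomials $g_1,\dots,g_n$ with $g_i$ independent of $x_i$. To produce such a presentation I would use the standard degree-reduction (``gadget'') technique. Starting from generators $p_1,\dots,p_r$ of the ideal of $S$, I introduce one fresh variable and one defining equation for each product or square occurring in the $p_\ell$: a product $x_ax_b$ is replaced by a new coordinate $y$ together with the equation $y-x_ax_b=0$, and a square $x_a^2$ by a copy $y_a-x_a=0$ and an equation $y-x_ay_a=0$. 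Iterating, I obtain an equivalent system, in enlarged variables $x_1,\dots,x_N$, all of whose equations are multilinear of degree at most two. The forgetful projection onto the original $m$ coordinates is an isomorphism onto $S$, since each auxiliary coordinate is pinned down as a polynomial function of the originals, so the enlarged variety is the graph of a polynomial map.

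It then remains to assemble these equations into a \emph{square} multilinear system with the ``avoid-your-own-variable'' property, one equation per player. Each reduced equation involves at most three variables, so it can be assigned an owner whose variable does not occur in it; I would enlarge the number of players by adjoining dummy variables carrying the trivial equation $g\equiv 0$ until a perfect matching of equations to owners (satisfying Hall's condition) exists and the system becomes square with $n=N$. Setting the payoff differences so that each $g_i$ equals the equation assigned to player $i$ then yields a game $X$ with $U_{X,0}\simeq S$. The main obstacle is precisely this last bookkeeping: one must simultaneously make the system square, route every constraint to a player whose variable it omits (in particular a constraint fixing $x_i$ cannot be carried by player $i$ itself, so it must be delegated to an auxiliary player), and verify that the forgetful projection is an isomorphism of algebraic varieties — not merely a semialgebraic or set-theoretic bijection, which is exactly the strengthening of Datta's original formulation that \cite{IJ} carried out in the curve case and that I would adapt here.
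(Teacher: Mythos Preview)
The paper does not prove this statement at all: Theorem~\ref{theo:datta} is presented as a citation, a rephrasing of \cite[Theorem~1, Theorem~6]{datta} in the category of algebraic varieties following \cite{IJ}, with no argument given. So your proposal is not competing with anything in the paper but is rather a reconstruction of Datta's own proof.

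As such, your sketch is essentially correct and follows Datta's strategy. The identification of $U_{X,0}$ with the zero locus in $\mathbb{A}^n$ of $n$ multilinear polynomials $g_1,\dots,g_n$ where $g_i$ omits $x_i$, with the $2^{n-1}$ coefficients of each $g_i$ freely specifiable via payoff differences, is exactly right. The degree-reduction gadgets (introducing auxiliary variables for products and duplicating variables for squares) are precisely Datta's device, and your remark that the forgetful projection is an algebraic isomorphism because each auxiliary coordinate is a polynomial in the originals is the key observation needed to upgrade Datta's stable semialgebraic isomorphism to an isomorphism of varieties, as \cite{IJ} does. The one place to be a bit more careful than your sketch indicates is the owner-assignment step: Datta does not literally appeal to Hall's theorem but instead orders the auxiliary variables and equations so that player $i$ carries an equation in variables of strictly smaller index; your Hall-matching formulation is a legitimate abstraction of this, but you should verify explicitly that after padding with dummy players carrying the zero equation the bipartite matching condition is met, and that an equation pinning down $x_j$ (which necessarily involves $x_j$) is always routed to some player $i\neq j$.
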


\para 

In \cite{IJ}, the authors remarked that the affine universality does not hold for $l=1$ since the dimension of $Y_{X,1}$ is at least $1$. Similarly, 
since $Y_{X,l}$ is the intersection of $n$ divisors in $\Mn$, we have that 
\[
\dim Y_{X,l} \geq 2(n-2l)+4l-(n-l)-n = l.
\]
 Hence, $(l-1)$--dimensional real affine algebraic varieties can not be obtained from this construction, and we deduce that the affine universality does not hold for $l\geq 1$. 
 In \cite{IJ}, this dimension problem is overcome for the Nash CI curve in two different ways, giving two partial answers to the affine universality for $l=1$ in \cite[Corollary 17]{IJ} and \cite[Theorem 18]{IJ}. Our goal is to generalize these results for any $l \in \mathbb{N}$. 

 \begin{lemma}\label{lemma: univ1}
 For every $n$-player game with binary choices with payoff tables $\tilde{X}^{(1)},\ldots, \tilde{X}^{(n)}$, there exists an $(n+2l)$-player game with binary choices with payoff tables $X^{(1)},\ldots,X^{(n+2l)}$ such that $$U_{X,l}\simeq U_{\tilde{X},0}\times \mathbb{R}^l.$$
 \end{lemma}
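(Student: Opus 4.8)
The plan is to build $X$ as a completely decoupled game. The first $n$ players will play a copy of $\tilde X$ among themselves, and the remaining $2l$ players will be split into $l$ cliques $C_1,\ldots,C_l$ of size two, each playing, in isolation, the two-player game furnished by the $l=1$ case of the construction (that is, \cite[Corollary 17]{IJ} applied to the empty base). Concretely, for an isolated player $i\in[n]$ I would take $X^{(i)}$ to depend only on the strategies of players $1,\ldots,n$ and to agree with $\tilde X^{(i)}$, while for a clique $C_\beta$ I would let the payoffs of its two members depend only on the strategies inside $C_\beta$. With the partition $\mathbf n=(1,\ldots,1,2,\ldots,2)$ consisting of $n$ ones and $l$ twos, the independence model is $\Mn=(\P^1)^n\times(\P^3)^l$ and $Y_{X,l}$ is its Nash CI variety, as in the hypotheses.

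The heart of the argument is the evaluation of the determinants $F_{(i,l_i)}$ of \eqref{eq:poly gen det} on this decoupled game. Since the payoff differences of an isolated player $i$ are independent of the clique strategies, summing over the clique indices factors $F_{(i,1)}=\tilde F_i\cdot\prod_\beta\Sigma_\beta$, where $\tilde F_i$ is precisely the Nash equation \eqref{eq:poly n=1} of $\tilde X$ in the coordinates of the $(\P^1)^n$ factors, and $\Sigma_\beta=\sum_{j,j'}\sigma^{(\beta)}_{jj'}$. In the same way each clique equation factors as $F_{(\beta,l_\beta)}=\Xi_\beta\cdot\hat F_{(\beta,l_\beta)}$, with $\hat F_{(\beta,l_\beta)}$ involving only the coordinates of the $\beta$-th $\P^3$ and $\Xi_\beta$ a product of coordinate sums of the other factors. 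The decisive observation is that both $\Sigma_\beta$ and $\Xi_\beta$ divide $p_{++\cdots+}$, so their zero loci are carried by the monomial parametrization into the special hyperplane $\{p_{++\cdots+}=0\}\subseteq\W$. Because $Y_{X,l}=\overline{(\V\cap\Mn)\setminus\W}$ has no component inside $\W$, only the factors $\tilde F_i$ and $\hat F_{(\beta,l_\beta)}$ can survive, so away from $\W$ the defining equations reduce to $\tilde F_1=\cdots=\tilde F_n=0$ together with $\hat F_{(\beta,1)}=\hat F_{(\beta,2)}=0$ for each $\beta$. These two groups of equations live in disjoint sets of homogeneous variables, so $U_{X,l}$ should split as the product of the Nash chart $U_{\tilde X,0}=\bbV(\tilde F_1,\ldots,\tilde F_n)\cap\{\sigma^{(i)}_2\neq 0\}$ with the $l$ clique charts $\bbV(\hat F_{(\beta,1)},\hat F_{(\beta,2)})\cap\{\sigma^{(\beta)}_{22}\neq 0\}$.

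To finish I would invoke the chosen clique game to identify each clique chart with $\mathbb R$: this is exactly the $l=1$ statement of \cite[Corollary 17]{IJ} with empty base, where the relevant Spohn/Nash CI curve is rational and its affine chart is $\mathbb A^1$. Combining this with the product decomposition yields $U_{X,l}\simeq U_{\tilde X,0}\times\mathbb R^l$. I expect the main obstacle to be the bookkeeping behind this decomposition: one must check that removing the $\W$-components genuinely discards every component on which some $\Sigma_\beta$ or $\Xi_\beta$ vanishes while introducing no spurious extra component, so that $U_{X,l}$ equals the product rather than merely containing it. This amounts to confirming that the product locus $\bbV(\tilde F_i,\hat F_{(\beta,\cdot)})$ actually meets $\{p_{j_1\cdots j_n}\neq 0,\,p_{++\cdots+}\neq 0\}$ — which holds once $U_{\tilde X,0}$ and each clique chart contain points with strictly positive coordinates — and that the factorizations $F_{(i,1)}=\tilde F_i\prod_\beta\Sigma_\beta$ and $F_{(\beta,l_\beta)}=\Xi_\beta\hat F_{(\beta,l_\beta)}$ are exact identities of polynomials. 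The isomorphism type $\mathbb R$ of the clique factor is then imported wholesale from the already-established curve case, so no new analysis of the $\P^3$ factors is needed.
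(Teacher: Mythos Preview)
Your strategy—decouple the $(n+2l)$-player game into a copy of $\tilde X$ on the first $n$ players and $l$ isolated two-player clique games—is exactly the paper's. The difference is in the specific payoff choices, and your choices create precisely the obstacle you flag at the end, while the paper's choices dissolve it.

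For the isolated players you make $X^{(i)}$ constant in the clique strategies, which forces the factorization $F_{(i,1)}=\tilde F_i\cdot\prod_\beta\Sigma_\beta$ and then a saturation argument to strip the $\Sigma_\beta$'s. The paper instead sets $X^{(i)}_{j_1\cdots j_{n+2l}}=\tilde X^{(i)}_{j_1\cdots j_n}$ only when $j_{n+1}=\cdots=j_{n+2l}=2$ and zero otherwise; then the sum in \eqref{eq:poly n=1} collapses and $F_i=G_i\cdot\prod_\beta\sigma^{(\beta)}_{22}$, which is literally $G_i$ on the chart $\{\sigma^{(\beta)}_{22}\neq0\}$. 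No extraneous factor, no saturation. For the cliques you appeal to \cite[Corollary 17]{IJ} abstractly; the paper invokes \cite[Proposition 20]{IJ} to write down an explicit two-player game for which $F_{n+i,1}=\sigma^{(n+i)}_{11}+\sigma^{(n+i)}_{21}$ and $F_{n+i,2}=\sigma^{(n+i)}_{11}+\sigma^{(n+i)}_{12}$ are \emph{linear}, so $\bbV(F_{n+i,1},F_{n+i,2})\cap\{\sigma^{(n+i)}_{22}\neq0\}$ is visibly a copy of $\mathbb{R}$. With these choices the product decomposition $U_{X,l}=\bbV(G_1,\ldots,G_n)\times\mathbb{R}^l$ is read off directly.

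Your closing paragraph does not actually discharge the bookkeeping: ``the product locus meets the complement of $\W$'' gives at best one inclusion, not equality, between $U_{X,l}$ and the product; and the hypothesis that $U_{\tilde X,0}$ contains points with strictly positive coordinates is not available for an arbitrary $\tilde X$ (the lemma is stated for \emph{every} game). This gap is repairable—one can carry through the component analysis of $\overline{\bbV(\tilde F_i,\hat F_{(\beta,\cdot)})\setminus\W}$ factor by factor—but it is extra work your payoff choices manufactured. Switching to the paper's choices eliminates the issue.
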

 \begin{proof}
 Let $G_1,\ldots,G_n$ be the polynomials defining $U_{\tilde{X},0}$ in $\mathbb{A}^n$. We consider an $(n+2l)$--player game with payoff tables
 $X^{(1)},\ldots,X^{(n+2l)}$. Let $\mathbf{\tilde{n}}$ be the partition of $n+2l$ where $1$ and $2$ appear $n$ and $l$ times respectively. Let $\sigma_j^{(i)}$ for $j\in [2]$ and $i\in[n]$ be the coordinates of the $n$ factors of $\mathcal{M}_\mathbf{\tilde{n}}$ corresponding to $\P^1$, and let $\sigma_{j_1j_2}^{(n+i)}$ for $j_1,j_2\in[2]$ and $i\in[l]$ be the coordinates of the $\P^2$ factors of $\mathcal{M}_\mathbf{\tilde{n}}$. Moreover, let $F_1,\ldots,F_n,F_{n+1,1},F_{n+1,2},\ldots,F_{n+l,1},F_{n+l,2}$ be the polynomials defining $U_{X,l}$. As in the proof of \cite[Proposition 20]{IJ}, we can fix the payoff tables of the players $n+1,\ldots,n+2l$ such that 
 \[
 F_{n+i,1} = \sigma_{1,1}^{(n+i)}+ \sigma_{2,1}^{(n+i)} \text{ and }  F_{n+i,2} = \sigma_{1,1}^{(n+i)}+ \sigma_{1,2}^{(n+i)} 
 \]
for every $i\in[l]$. In particular, we get that 
\[
U_{X,l} = \bbV(F_1,\ldots,F_n)\times \mathbb{R}^l.
\]
Now, we fix the payoff tables of the first $n$ players to be 
\[
X_{j_1,\ldots,j_{n+2l}}^{(i)} = \left\{ 
\begin{array}{cc}
  \tilde{X}^{(i)}_{j_1,\ldots,j_n}   &  \text{ if } j_{n+1}=\ldots =j_{n+2l}=2\\
    0 & \text{ else }
\end{array}.
\right.
\]
Then, the polynomials $F_1,\ldots, F_n$ are equal to the polynomials $G_1,\ldots,G_n$ and we conclude that
\[
U_{X,l} =  \bbV(F_1,\ldots,F_n)\times \mathbb{R}^l \simeq U_{\tilde{X},l}\times\mathbb{R}^l.\qedhere
\popQED
\]
\end{proof}

 From Theorem \ref{theo:datta} and Lemma \ref{lemma: univ1} we deduce the following first universality theorem for Nash CI varieties.

\para 

\begin{theorem}\label{theo:univ1}
Let $l \in \mathbb{N}$ and let $S\subseteq\mathbb{R}^m$ be an affine real algebraic variety. Then, there exists $n\geq l$ and an $n$--players game $X$ with binary choices such that $U_{X,l}\simeq S\times\mathbb{R}^l$.
\end{theorem}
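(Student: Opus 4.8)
The plan is to obtain the desired game by chaining the two universality inputs already at our disposal: Datta's theorem for totally mixed Nash equilibria (Theorem~\ref{theo:datta}) and the dimension-shifting construction of Lemma~\ref{lemma: univ1}. Since $l\in\mathbb{N}$ is fixed in advance while we are free to choose the number $n$ of players, the strategy is to first realize $S$ as the Nash locus $U_{\bullet,0}$ of some auxiliary game, and then inflate that game by $2l$ extra players so that its $l=0$ locus acquires an $l$-fold product with an affine space.

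Concretely, I would proceed as follows. First, applying Theorem~\ref{theo:datta} to the given real affine algebraic variety $S\subseteq\mathbb{R}^m$ produces an integer $n_0$ and an $n_0$-player game $\tilde{X}$ with binary choices such that $U_{\tilde{X},0}\simeq S$ as algebraic varieties. Next, I would feed this game $\tilde{X}$ into Lemma~\ref{lemma: univ1}: for the fixed $l$, the lemma yields an $(n_0+2l)$-player binary game $X$ with the property that
\[
U_{X,l}\simeq U_{\tilde{X},0}\times\mathbb{R}^l.
\]
Setting $n:=n_0+2l$ and composing the two isomorphisms gives $U_{X,l}\simeq S\times\mathbb{R}^l$, which is exactly the claim; the constraint $n\geq l$ is automatic since $n=n_0+2l\geq 2l\geq l$.

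The only points requiring attention are bookkeeping rather than genuine obstacles, because the substantive construction has already been carried out in Lemma~\ref{lemma: univ1}. I would verify that the two isomorphisms compose in the intended category, i.e.\ that both are isomorphisms of real affine algebraic varieties and that forming the product with $\mathbb{R}^l$ is functorial, so that $U_{\tilde{X},0}\simeq S$ yields $U_{\tilde{X},0}\times\mathbb{R}^l\simeq S\times\mathbb{R}^l$; this is immediate. I would also confirm that the partition underlying $X$ has the required shape $(1,\ldots,1,2,\ldots,2)$ with $2$ appearing exactly $l$ times, so that $X$ genuinely belongs to the family for which $U_{\bullet,l}$ was defined --- but this is precisely how the enlarged game in Lemma~\ref{lemma: univ1} is built. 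Hence no new ideas beyond the two cited results are needed, and the proof reduces to this two-step composition.
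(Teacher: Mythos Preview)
Your proposal is correct and follows exactly the paper's approach: the paper simply states that the theorem is deduced from Theorem~\ref{theo:datta} and Lemma~\ref{lemma: univ1}, and your two-step composition (first realize $S$ as $U_{\tilde X,0}$ via Datta, then apply the lemma to obtain $U_{X,l}\simeq U_{\tilde X,0}\times\mathbb{R}^l\simeq S\times\mathbb{R}^l$) is precisely that deduction spelled out.
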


\para 

A consequence of Theorem \ref{theo:univ1} is that, for any $l$, the space of all varieties $Y_{X,l}$ for any binary game $X$ with any number of players satisfies Murphy's law. Indeed, we say that the space of all varieties $Y_{X,l}$ satisfies Murphy's law if, for any singularity type, there exists a game $X$ and $l\in\mathbb{N}$ such that $Y_{X,l}$ has this singularity type. Then, from the fact that $S\times \mathbb{A}^k$ has the same singularity type as $S$, we deduce that for any $l \in \mathbb{N}$ the spaces of all variety $Y_{X,l}$ satisfies Murphy's law. For further reading on Murphy's law in algebraic geometry see \cite{Vakil}.

\para 

In Theorem \ref{theo:univ1} we solved the dimension problem by artificially adding extra dimensions. In our second approach, we force the dimension to be at least $l$. 

\para 

\begin{theorem}\label{theo:univ2}
Let $l\in\mathbb{N}$ and let $S\subseteq \mathbb{R}^n$ be a real affine algebraic variety defined by  $G_1,\ldots,G_m\in\mathbb{R}[x_1,\ldots,x_n]$ with $m\leq n-l$. For every $i\in\{1,\ldots,n\}$, let $\delta_{i}$
be 
the maximum of the degrees of $x_i$ in $G_1,\ldots,G_m$.
Then, there exists a  $(\delta+n+l)-$player game with binary choices such that the affine open subset $W_X$ of $C_X$ is isomorphic to $S$, where $\delta = \delta_1+\cdots +\delta_n$.
\end{theorem}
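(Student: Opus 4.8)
The plan is to generalize the construction behind \cite[Theorem 18]{IJ}, which treats the Nash CI curve ($l=1$), by handling the $l$ clique factors $\P^3$ of $\Mn$ simultaneously. The guiding principle is a degree reduction following Datta: I would first rewrite the system $G_1,\ldots,G_m$ as a system that is multilinear in an enlarged collection of coordinates, obtained by introducing, for each variable $x_i$ of maximal degree $\delta_i$, a number of auxiliary copies of $x_i$ together with linear relations identifying them; substituting these copies into the monomials of the $G_j$ yields multilinear polynomials $\tilde G_1,\ldots,\tilde G_m$. I would then realize this multilinear system, together with suitable linear forms coming from the clique factors, as the vanishing locus of the Nash CI equations $F_{(i,l_i)}$ of an explicitly chosen $(\delta+n+l)$-player game: the $\P^1$ factors carry the (copies of the) variables and the multilinear relations, while the $l$ clique factors $\P^3$ absorb the $l$ forced dimensions of $Y_{X,l}$.

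Next I would choose the payoff tables to realize the two kinds of equations. By \eqref{eq:poly n=1} the equation $F_{(i,1)}$ attached to a singleton player is an arbitrary multilinear form in the coordinates of the remaining factors, its coefficients being payoff differences; hence I can prescribe the singleton equations to be exactly the multilinear polynomials $\tilde G_j$ and the copy-identification relations. For the $l$ pair-players I would argue as in the proof of Lemma \ref{lemma: univ1}, using the explicit generators of the linear systems $W_{(i,l_i)}$ from Lemma \ref{lemma:gen system Nash CI}, to fix payoffs so that the two equations of each clique become linear forms; on the chart $\sigma_{22}^{(\cdot)}\neq 0$ this cuts each $\P^3$ down to a single free affine parameter. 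These $l$ parameters supply precisely the $l$ directions demanded by the forced inequality $\dim Y_{X,l}\geq l$, and the hypothesis $m\leq n-l$ is what makes the remaining counts match, so that the singleton players suffice to carry all of $\tilde G_1,\ldots,\tilde G_m$ together with the copy relations without introducing spurious free coordinates.

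Finally I would identify $W_X$ as the affine open subset of $Y_{X,l}$ where the distinguished coordinates $\sigma_2^{(\cdot)}$ and $\sigma_{22}^{(\cdot)}$ are nonzero, and exhibit the isomorphism with $S$: the copy-identification relations express every auxiliary coordinate as a regular function of the $x_i$, the linearized clique equations express the non-free clique coordinates as regular functions of the free parameters, and on the remaining $x$-coordinates the equations $\tilde G_j$ restrict to $G_j$, so that the coordinate projection onto $(x_1,\ldots,x_n)$ is an isomorphism of affine varieties $W_X\xrightarrow{\sim}S$. The main obstacle is the simultaneous bookkeeping in the second step: one must verify that the special, highly non-generic payoff choices do realize exactly the prescribed linear and multilinear forms — this is where Lemma \ref{lemma:gen system Nash CI} is essential, since the maps $\phi_{(i,l_i)}$ are not surjective for cliques — and that the dimension counts align so that $W_X$ is isomorphic to $S$ itself rather than to a product $S\times\mathbb{R}^{j}$; it is precisely here that the bound $m\leq n-l$ enters.
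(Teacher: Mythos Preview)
Your overall plan — Datta's multilinearization on the singleton side together with specially chosen clique equations that cut each $\P^3$ down to a $\P^1$ — is the paper's approach. The gap lies in what you do with those $l$ clique $\P^1$'s, and this is precisely what separates Theorem~\ref{theo:univ2} from Theorem~\ref{theo:univ1}.

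You write that the singleton $\P^1$ factors ``carry the (copies of the) variables'' while each clique $\P^3$ yields ``a single free affine parameter'' that ``absorb[s]'' a forced dimension. Taken literally, this puts all of Datta's $\delta+n$ coordinates into only $\delta+n-l$ singleton factors (a shortfall of $l$), keeps the clique parameters disjoint from the $x_i$, and hence produces $W_X\simeq V\times\mathbb{R}^l$ for some $V$ — the conclusion of Lemma~\ref{lemma: univ1}, not $W_X\simeq S$. Your final claim that projection to $(x_1,\ldots,x_n)$ is an isomorphism then cannot hold, since the $l$ free clique parameters are forgotten by that projection.

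What the paper does instead is identify the $(\P^1)^{\delta+n}$ cut out by the $2l$ clique equations with the ambient space of Datta's $(\delta+n)$-player game $\tilde X$: the $l$ clique $\P^1$'s become $l$ of Datta's coordinates, not extra directions. Datta's game $\tilde X$ can be chosen so that the last $n-m$ payoff tables vanish, making the last $n-m\ge l$ Nash equations identically zero. The $\delta+n-l$ singleton equations of $X$ are then set to realize the first $\delta+n-l$ equations of $\tilde X$; since the $l$ omitted equations were already trivial, one obtains $U_{X,l}\simeq U_{\tilde X,0}\simeq S$. This is where the hypothesis $m\le n-l$ is genuinely used — not merely as a bookkeeping count, but to guarantee that $\tilde X$ has at least $l$ redundant equations available to drop when one passes from $\delta+n$ to $\delta+n-l$ singleton players.
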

\begin{proof}
We adapt the proofs of \cite[Theorem 6]{datta} and \cite[Theorem 18]{IJ} to our setting. Consider a $(\delta+n+l)$--players game $X$. We label the last $2l$ players by $(1,1),(1,2),\ldots,(l,1),(l,2)$. The variety $Y_{X,l}$ lies in the Segre variety $\left( \P^1\right)^{\delta+n-l}\times\left(\P^3\right)^{l}$. We denote the coordinates of the $\P^3$ factors by $\sigma_{j_1,j_2}^{(\delta+n-l+i)}$ for $j_1,j_2\in[2]$ and $i\in[l]$. Moreover, we denote the polynomials defining $U_{X,l}$ by
\[
F_1,\ldots,F_{\delta+n-l},F_{1,1},F_{1,2},\ldots,F_{l,1},F_{l,2}.
\]

As in the proof of \cite[Proposition 20]{IJ} we can fix the payoff tables of the players $(1,1),\ldots,(l,2)$ such that 
 \[
 F_{i,1} = \sigma_{1,1}^{(\delta+n-l+i)}+ \sigma_{2,1}^{(\delta+n-l+i)} \text{ and }  F_{\delta+n-l+i,2} = \sigma_{1,1}^{(\delta+n-l+i)}+ \sigma_{1,2}^{(\delta+n-l+i)} 
 \]
for every $i\in[l]$. In particular, we deduce that 
\[
\mathbb{V}(F_{1,1},\ldots,F_{l,2}) = \left(\P^1\right)^{\delta+n}.
\]
Following the proof of \cite[Theorem 6]{datta}, there exists a $(\delta+n)$--players game $\tilde{X}$ such that $U_{\tilde{X},0}= S$. Moreover, we can assume that the last $n-m$ payoff tables of the game vanish. Now, we fix the payoff tables of the first $\delta+n-l$  of the game $X$ as follows:
\[
X_{j_1,\ldots,j_{\delta+n-l},j_{1,1},\ldots,j_{l,2}}^{(i)} = \left\{ 
\begin{array}{cc}
  \tilde{X}^{(i)}_{j_1,\ldots,j_{\delta+n-l} }  &  \text{ if } j_{1,1}=\cdots =j_{l,2}=2\\
    0 & \text{ else }
\end{array}
\right.
\]
for $i\in[\delta+n-l]$. One can check that the polynomials $F_{1},\ldots, F_{\delta+n-l}$ are equal to (but with different variables) the $\delta+n-l$ polynomials defining $U_{\tilde{X},0}$. Using that $n-m\geq l$, we deduce that 
\[
U_{X,l}= \mathbb{V}(F_1,\ldots,F_{\delta+n-l})\cap \left(\P^1\right)^{\delta+n} \simeq U_{\tilde{X},0}\simeq S.\qedhere
\popQED
\]

\end{proof}

\para

\begin{remark}
    In \cite{datta}, Datta's universality theorem refers to  the set of totally mixed Nash equilibria. An analogous statement for the set of totally mixed CI equilibria can be obtained in our setting. Namely, given $l$ and a real affine algebraic variety $S$, there exists a game with binary choices such that $U_{X,l}\cap \Delta $ is isomorphic to $S\times \mathbb{R}^l$ (Corollary \ref{theo:univ1}). As in \cite{datta}, here we use the notion of stable isomorphism in the category of semialgebraic sets. To derive these results one should argue as in \cite{datta}: the set of real points of a real affine algebraic variety is isomorphic to the set of real points of a real affine algebraic variety whose real points are contained in the probability simplex. Now, assuming the latter, the statement follows from  Proposition \ref{theo:univ1}.
     An analogous statement also holds for Theorem \ref{theo:univ2}.
\end{remark}

\para 

Note that the proofs of both theorems provide a method for, given the real affine algebraic variety, finding a game satisfying the statements of the theorems.

\para

\section{Acknowledgements}
We thank Daniele Agostini, Matthieu Bouyer, Ben Hollering, Serkan Ho{\c{s}}ten, and Bernd Sturmfels for helpful discussions that greatly benefited this project. Javier Sendra–Arranz received the support of a fellowship from the “la Caixa” Foundation
(ID 100010434). The fellowship code is LCF/BQ/EU21/11890110.

\addcontentsline{toc}{section}{References}\label{sec:references}
\bibliographystyle{plain}

\end{document}